\documentclass[11pt,english]{amsart}
\usepackage[T1]{fontenc}
\usepackage[latin9]{inputenc}
\usepackage{mathrsfs}
\usepackage{amsthm}
\usepackage{amssymb}

\makeatletter
\numberwithin{equation}{section}
\numberwithin{figure}{section}
\theoremstyle{plain}
\newtheorem{thm}{\protect\theoremname}
\theoremstyle{definition}
\newtheorem{defn}[thm]{\protect\definitionname}
\theoremstyle{plain}
\newtheorem{prop}[thm]{\protect\propositionname}
\theoremstyle{plain}
\newtheorem{lem}[thm]{\protect\lemmaname}
\theoremstyle{remark}
\newtheorem{rem}[thm]{\protect\remarkname}
\theoremstyle{plain}
\newtheorem{cor}[thm]{\protect\corollaryname}


\makeatother

\usepackage{babel}
\providecommand{\corollaryname}{Corollary}
\providecommand{\definitionname}{Definition}
\providecommand{\lemmaname}{Lemma}
\providecommand{\propositionname}{Proposition}
\providecommand{\remarkname}{Remark}
\providecommand{\theoremname}{Theorem}

\begin{document}

\title[Differential Equations Driven by Variable Order Hölder Noise]{Differential Equations Driven by Variable Order Hölder Noise, and
the Regularizing Effect of Delay }

\author{Fabian A. Harang \\
University of Oslo}

\address{Fabian A. Harang\\
Department of Mathematics\\
 University of Oslo\\
 Email: fabianah@math.uio.no}

\keywords{Delayed Differential Equations, Rough Paths, Multifractional Brownian
Motion, Variable Order Spaces; Stochastic Differential Equations }

\subjclass[2000]{60H10, 60H05,60G18, 60G22, 60G15 }
\begin{abstract}
In this article we extend the framework of rough paths to processes
of variable Hölder exponent or $variable$ $order$ $paths$. A typical
example of such paths is the multifractional Brownian motion, where
the Hurst parameter of a fractional Brownian motion is considered
to be a function of time. We show how a class of multiple discrete
delay differential equations driven by signals of variable order are
especially well suited to be studied in a path wise sense. In fact,
under some assumptions on the Hölder regularity functions of the driving
signal, the local Hölder regularity of the variable order signal may
be close to zero, without more than two times differentiable diffusion
coefficients (in contrast to higher regularity requirements known
from constant order rough path theory). Furthermore, we give a canonical
algorithm to construct the iterated integral of variable order on
the domain unit square, by constructing the iterated integrals on
some well chosen strip around the diagonal of unit square, and then
extending it to the whole domain using Chens relation. We show how
we can apply this to construct the rough path for a multifractional
Brownian motion. At last, we give a recollection of well known results
on differential equations from the theory rough path, but tailored
for variable order paths. 
\end{abstract}

\maketitle
\tableofcontents{}

\section{Introduction }

In the last 25 years, the field of stochastic analysis has been enriched
with the deterministic tools of what is called Rough Path theory.
This recent theory has proven powerful in fields ranging from the
pure analysis of differential equations to the applied machine learning.
Using only the path properties of irregular signals, the theory are
able to study stochastic processes from a purely deterministic point
of view, making this theory especially interesting for applied mathematicians
or physicists with interest in phenomena of irregular behavior without
formal background in probability theory. 

A couple of years before T. Lyons published his seminal paper \cite{TLyons}
introducing the concept of rough paths, Levy-Vèhel and Peltier published
a paper defining a new stochastic processes called multifractional
Brownian motion, a generalization of the well known fractional Brownian
motion allowing the Hurst exponent to be a function of time (see \cite{Peltier})
. This processes quickly gained interest in applications as it could
describe variations in the local regularity of many phenomena such
as internet traffic, geophysics, image synthesis, and financial markets
(see f.ex, \cite{LebovitsLevy}, \cite{Bianchi}). The most interesting
aspect of the processes from the current article's perspective is
the fact that the (point-wise) Hölder regularity of the multifractional
Brownian motion varies in time according to a regularity function.
That is, if we consider a continuous function $h:\left[0,T\right]\rightarrow\left(0,1\right)$,
the multifractional Brownian motion $t\mapsto B_{t}^{h}$ has the
property that for some $\xi>0$, 
\[
|B_{t+\xi}^{h}-B_{t}^{h}|\lesssim|\xi|^{h(t)}\,\,\,\forall t\in\left[0,T-\xi\right].
\]
 We therefore say that $t\mapsto B_{t}^{h}$ has a local Hölder property.
Of course, restricting our self to $C^{1}$ functions $h:\left[0,T\right]\rightarrow\left[h_{*},h^{*}\right]\subset\left(0,1\right)$
one can show that $t\mapsto B_{t}^{h}$ is globally Hölder continuous
(in the classical sense) with Hölder coefficient $h_{*}$. The multifractional
Brownian motion introduced in \cite{Peltier} by Peltier and Levy-Vehel,
is defined as a generalized fBm on Mandelbrot-Van Ness form, in the
sense that 
\[
B_{t}^{h}:=\int_{-\infty}^{0}\left(t-r\right)^{h\left(t\right)-\frac{1}{2}}-\left(-r\right)^{h\left(t\right)-\frac{1}{2}}dB_{r}+\int_{0}^{t}\left(t-r\right)^{h\left(t\right)-\frac{1}{2}}dB_{r}.
\]
 Such processes ( and other types of processes of local Hölder regularity)
has received much study from the probabilistic and modeling point
of view, see for example \cite{AyCoVe,BenJafRed,Bianchi2013,Bianchi},
but has not yet been extensively studied from a stochastic analysis
point of view with respect to stochastic integration and differential
equations. To the best of our knowledge, only three articles studies
integration and differential equations driven by multifractional noise,
two of which is giving a construction of the stochastic integral through
white noise theory \cite{LebovitsLevy} and through tangent fractional
Brownian motions \cite{LebVehHer} and the third considers differential
equations with additive multifractional noise through an application
of a generalized Girsanov theorem \cite{Harang}. One of the goals
of the current article is to give a path-wise treatment of differential
equations driven by processes of variable local Hölder regularity,
using the framework of rough paths developed in \cite{FriHai} and
combining with the notion of variable order Hölder spaces introduced
by Samko and Ross in \cite{SamRos}. We believe that the rough Path
framework is more suited to study paths of variable local regularity,
not requiring martingale property, or other probabilistic properties,
but fully exploits the regularity properties of the process (which
we believe is one of the main points of interest of this type of process).
However, the rough path theory is centered around (globally) Hölder
continuous functions in the classical sense. We will therefore show
in this article a way to consider variable order paths in a rough
path setting without considering the minimum regularity of the regularity
function $\alpha$. Moreover, we will show how to construct iterated
integrals of multifractional Brownian motion (in a rough path sense)
satisfying the Chen relation and regularity requirements. We will
use processes in variable order Hölder spaces to study delay equations
driven by such noise. In fact, we show that an appropriate choice
of delay times in accordance with the Hölder regularity function of
a process gives existence and uniqueness of delay equations in a variable
order Young type setting. Let us motivate this by the following: 

Classical Young integration lets us define integrals of the form $\int YdX$
for (globally) Hölder continuous paths $X\in\mathcal{C}^{\alpha}$
and $Y\in\mathcal{C}^{\beta}$ with $\alpha+\beta>1.$ Similarly,
if $X$ and $Y$ are locally Hölder continuous in the sense that for
$\alpha,\beta\in C^{1}\left(\left[0,T\right];\left(0,1\right)\right)$
and small $\xi>0$ we have 
\[
|X_{t+\xi}-X_{t}|\lesssim|\xi|^{\alpha(t)},|Y_{t+\xi}-Y_{t}|\lesssim|\xi|^{\beta(t)},
\]
 we would require in Young theory that 
\begin{equation}
\left(\inf_{t\in\left[0,T\right]}\alpha\left(t\right)\right)+\left(\inf_{t\in\left[0,T\right]}\beta\left(t\right)\right)>1\label{eq:classic Young requirement}
\end{equation}
to construct the Young integral $\int YdX$ in the classical sense.
Consider now a dissection of $\left[0,T\right]$ given by $\left\{ \rho_{i}\right\} _{i=1}^{n}$,
i.e 
\[
\left[0,T\right]=\bigcup_{i=1}^{n-1}\left[\rho_{i},\rho_{i+1}\right]
\]
, by standard linearity properties of the integral, we can write 
\[
\int_{0}^{T}YdX=\sum_{i=1}^{n-1}\int_{\rho_{i}}^{\rho_{i+1}}YdX.
\]
Each integral in the sum on the right hand side above can be defined
locally, and we can see that the regularity $X$ is complementing
the regularity of $Y$ locally (on $\left[\rho_{i},\rho_{i+1}\right]\subset\left[0,T\right]$
), rather than globally. We are now tempted to take the limit when
$n\rightarrow\infty$, however, we must be careful to check that it
converges, and we must ensure that we can actually ``glue'' together
this sum uniquely in the limit when $\rho_{i}\rightarrow\rho_{i+1}$
for all $i$, but intuitively we can see that the construction of
the left hand side should have a weaker requirement than Equation
(\ref{eq:classic Young requirement}). In fact, it seems way more
appropriate to use a local complimentary Young requirement, which
we will later prove to be 
\[
\inf_{t\in\left[0,T\right]}\left(\alpha\left(t\right)+\beta\left(t\right)\right)>1.
\]
 It is clear that this requirement is much weaker, as this essentially
translates to the fact that the local regularity of $X$ must be complementary
to the local regularity of $Y$ at each time $t\in\left[0,T\right]$.
With the construction of pathwise variable order integration, we will
then study delayed differential equations driven by variable order
noise of the form 
\begin{equation}
Y_{t}=\xi_{0}+\int_{0}^{t}f\left(Y_{t-\tau_{1}},Y_{t-\tau_{2}},..,Y_{t-\tau_{k}}\right)dX_{t}\,\,\,Y_{u}=\xi_{u}\,\,for\,\,\,u\in\left[-\tau_{k},0\right],\label{eq:multiple delay equation}
\end{equation}
where $X\in\mathcal{C}^{\alpha\left(\cdot\right)}\left(\left[0,T\right];\mathbb{R}^{d}\right)$
and $f\in C_{b}^{2}\left(\mathbb{R}^{n,k};\mathbb{R}^{d\times n}\right)$,
and $\left\{ \tau_{i}\right\} _{i=1}^{k}$ is a sequence of delay
times chosen carefully. This type of equation was studied in a rough
path framework by Neuenkirch, Nourdin and Tindel in \cite{NeuenNourTind},
where the authors proved existence and uniqueness of the above equation
when $X\in\mathcal{C}^{\alpha},$with $\alpha>\frac{1}{3}$ being
constant. In this article, we will show that unique solutions exists
to the above equation when the driving noise has local $\alpha-$Hölder
regularity which might be very close to $0$, and the diffusion coefficient
$f$ is only in $C_{b}^{2}$, when the delay is chosen appropriately.
Hence this improvement of regularity comes at the cost of restricting
the relationship between regularity functions and delay times, and
thus restricting the flexibility of the equation. In the variable
order framework, the existence and uniqueness of this equation depends
on the relationship between the delay times and the regularity function
in the sense that we must require 
\[
\inf_{t\in\left[\tau,T\right];i\in\left\{ 1,..,k\right\} }\left\{ \alpha\left(t\right)+\alpha\left(t-\tau_{i}\right)\right\} >1
\]
to obtain existence and uniqueness in a Young-type regime (i.e no
need for iterated integrals). The above requirement does not give
any lower bounds on $\alpha:\left[0,T\right]\rightarrow\left(0,1\right)$,
and hence opens up to study delay equations for paths of locally extreme
irregularity, as long as they are compensated through appropriately
chosen delay sequence $\left\{ \tau_{i}\right\} $. We can therefore
think of the chosen delay times as regularizing the solution in terms
of requirements on the diffusion coefficient. If one choose the regularity
function to be a sine wave of the form $\alpha\left(t\right)=\beta+\epsilon\sin\left(t\right)$
for some $\epsilon$ and $\beta$ such that $\alpha$ takes values
in $\left(0,1\right)$, we can see that for some appropriately chosen
constants, there exists at least one $\tau\in\left[0,T\right]$ such
that 
\[
\alpha\left(t\right)+\alpha\left(t-\tau\right)>1.
\]
 It is not difficult to come up with several more examples of regularity
functions which satisfy such a requirement. 

The article is structured as follows: 
\begin{itemize}
\item Section \ref{sec:framework} gives an introduction and some basic
concepts of variable order Hölder spaces and results relating to the
sewing lemma for variable order paths. 
\item In section \ref{sec:yOUNG-delay-equations} we show existence of a
unique solution to a multiple discrete delay equation driven by variable
order noise on the form of Equation (\ref{eq:multiple delay equation})
under some periodic type of regularity requirement on the regularity
function of a variable order path. 
\item In section \ref{sec:Variable-order-rough path and mBm} we give a
definition of what we mean by a variable order rough path, and shows
how to construct the rough path with respect to a multifractional
Brownian motion. For general variable order noise we provide an algorithm
to construct iterated integrals provided that the regularity function
is sufficiently smooth and one can construct an iterated integral
locally on a small sub square of $\left[0,T\right]$. 
\item Section \ref{sec:Controlled paths and variabel order RDE} is devoted
to showing how the rough path framework from \cite{FriHai} may be
used for variable order paths. This section is rather meant for those
not familiar with Rough Paths theory from before, as most of the results
here are simple generalizations of what can be found in \cite{FriHai}.
It is included however for to ensure that the article is self contained
. 
\item At last we have an appendix for an embedding result showing that if
a generalized Gagliardo norm is bounded, then we have variable order
Hölder continuity of functions. 
\end{itemize}

\subsection{Notation}

We will give a short recollection on some common notation in this
article: For a function $f:\mathbb{R}_{+}\rightarrow\left[a,b\right]$
We frequently write $f_{*}=a$ and $f^{*}=b$. We will use the constant
$C$ to denote a general constant when considering inequalities. This
constant may change throughout calculations, but when it is important
to give dependence of different variables, we write that variable
in subscript, i.e $C_{f}$ for $C(f)$. Otherwise we mostly adopt
the notation used in \cite{FriHai}, especially the convention that
$f_{s,t}=f(t)-f(s)$ is used to denote increments. 

\section{\label{sec:framework}Variable Order Hölder spaces and Sewing Lemma}

In this section we will give formal definitions of what we mean with
variable order rough paths. We will present some properties of variable
order Hölder spaces first introduced by S. Samko and B. Ross in \cite{SamRos},
and later studied in . Let us first give a definition of such a space.
\begin{defn}
Let $\alpha\in C\left(\left[0,T\right];\left(0,1\right)\right)$ satisfying
$\sup_{t\in\left[0,T\right];|h|\leq1}|h|^{\alpha\left(t\right)-\alpha\left(t+h\right)}\leq C$,
and consider a function $f:\left[0,T\right]\rightarrow\mathbb{R}^{n}$.
We define the Hölder space of variable order $\mathcal{C}^{\alpha\left(\cdot\right)}\left(\left[0,T\right];\mathbb{R}^{n}\right)$
to consist of functions $f$ such that 
\[
\parallel f\parallel_{\alpha\left(\cdot\right);\left[0,T\right]}:=\sup_{0\leq t\leq T;|h|\leq1}\frac{|f_{t,t+h}|}{|h|^{\alpha\left(t\right)}}<\infty.
\]
\end{defn}

Moreover, the mapping $f\mapsto|f\left(0\right)|+\parallel f\parallel_{\alpha\left(\cdot\right);\left[0,T\right]}$
induces a Banach space structure on $\mathcal{C}^{\alpha\left(\cdot\right)}\left(\left[0,T\right];\mathbb{R}^{n}\right)$.
Moreover, denote by $\mathcal{C}_{2}^{\beta\left(\cdot\right)}\left(\left[0,T\right]^{2};\mathbb{R}^{n}\right)$
the space of all two-variable functions $g$ satisfying 
\[
\parallel g\parallel_{\beta\left(\cdot\right);\left[0,T\right]}:=\sup_{0\leq t\leq T;|h|\leq1}\frac{|g_{t,t+h}|}{|h|^{\beta\left(t\right)}}<\infty,
\]
where in this case $g_{t,t+h}=g(t,t+h).$ 
\begin{description}
\item [{A1}] We will throughout this article assume that the regularity
functions $\alpha:\left[0,T\right]\rightarrow\left(0,1\right)$ we
consider satisfy the following requirement 
\[
\sup_{t\in\left[0,T\right];|h|\leq1}|h|^{\alpha\left(t\right)-\alpha\left(t+h\right)}\leq C
\]
. Sometimes this will also be clearly specified in the lemmas or theorems
to clarify the importance of this, but often we assume this condition
without further notice. 
\end{description}
There are several equivalent definitions of this space, and therefore,
we will give a lemma showing equivalence of different definitions
of variable order Hölder semi-norms. 
\begin{prop}
\label{prop:Equivalence of Variable H=0000F6lder norm}Let $\alpha\in C\left(\left[0,T\right];\left(0,1\right)\right)$
be a regularity function such that 
\[
\sup_{t\in\left[0,T\right];|h|\leq1}|h|^{\alpha\left(t\right)-\alpha\left(t+h\right)}\leq C,
\]
 we have that the following three expressions forms equivalent norms
on $\mathcal{C}^{\alpha\left(\cdot\right)}$
\[
\sup_{0\leq t\leq T;|h|\leq1}\frac{|f_{t,t+h}|}{|h|^{\alpha\left(t\right)}}\sim\sup_{0\leq t\leq T;|h|\leq1}\frac{|f_{t,t+h}|}{|h|^{\alpha\left(t+h\right)}}
\]
\[
\sim\sup_{0\leq s<t\leq T;|t-s|\leq1}\frac{|f_{s,t}|}{|t-s|^{\max\left(\alpha\left(t\right),\alpha\left(s\right)\right)}},
\]
where $\sim$ denotes equivalence. 
\end{prop}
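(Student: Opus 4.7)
The plan is to label the three quantities in the statement as $N_1(f)$, $N_2(f)$, $N_3(f)$ in the order they appear, and to establish the two pairwise comparisons $N_1\sim N_2$ and $N_3 = \max(N_1, N_2)$, which together give the result. All estimates rely solely on assumption A1; no deeper property of $\mathcal{C}^{\alpha(\cdot)}$ will be needed.

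First, for $N_1 \sim N_2$ I would use the trivial algebraic identity
\[
\frac{|f_{t,t+h}|}{|h|^{\alpha(t)}} = \frac{|f_{t,t+h}|}{|h|^{\alpha(t+h)}}\cdot|h|^{\alpha(t+h)-\alpha(t)}.
\]
Assumption A1 directly bounds $|h|^{\alpha(t)-\alpha(t+h)}\leq C$ uniformly in $t\in[0,T]$ and $|h|\leq 1$, and the symmetric bound $|h|^{\alpha(t+h)-\alpha(t)}\leq C$ is obtained by reapplying A1 at the base point $t' = t+h$ with step $h'=-h$, which is legitimate because $|h| = |h'|$ and both $t'$ and $t'+h'$ still lie in $[0,T]$. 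Taking suprema in $(t,h)$ then yields $N_1\leq C\,N_2$ and $N_2\leq C\,N_1$.

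Next, for the comparison with $N_3$, I would exploit the elementary fact that for $|h|\leq 1$ the map $x\mapsto|h|^{x}$ is non-increasing, so that $|h|^{\max(\alpha(t),\alpha(t+h))} = \min(|h|^{\alpha(t)},|h|^{\alpha(t+h)})$. Taking reciprocals and multiplying by $|f_{t,t+h}|$ gives the pointwise identity
\[
\frac{|f_{t,t+h}|}{|h|^{\max(\alpha(t),\alpha(t+h))}} = \max\!\left(\frac{|f_{t,t+h}|}{|h|^{\alpha(t)}},\,\frac{|f_{t,t+h}|}{|h|^{\alpha(t+h)}}\right).
\]
Since the supremum of a pointwise maximum equals the maximum of the suprema, taking the sup over admissible pairs yields $N_3 = \max(N_1,N_2)$ exactly. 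Combined with the previous step, all three seminorms are equivalent with constants depending only on the $C$ from A1.

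I do not foresee any serious obstacle; the only mildly delicate point is book-keeping the index sets: $N_1,N_2$ are indexed by $(t,h)$ with $t\in[0,T]$ and $|h|\leq 1$ (both signs, i.e.\ forward and backward increments), while $N_3$ runs over pairs $s<t$ with $|t-s|\leq 1$. These index sets correspond via $s=\min(t,t+h)$ and $t_{\mathrm{new}}=\max(t,t+h)$, and because $|h|$ enters only through its absolute value, the reindexing is automatic. The essence of the proof is thus the observation that A1 is precisely what is needed to exchange evaluations of $\alpha$ at the two endpoints of an increment of length at most one.
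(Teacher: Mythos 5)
Your proof is correct and follows essentially the same route as the paper: the first equivalence is the identical algebraic manipulation $\frac{|f_{t,t+h}|}{|h|^{\alpha(t)}}=\frac{|f_{t,t+h}|}{|h|^{\alpha(t+h)}}|h|^{\alpha(t+h)-\alpha(t)}$ combined with assumption \textbf{A1} (applied symmetrically at the shifted base point), and the comparison with the third seminorm is again an elementary pointwise argument. Your observation that $|h|^{\max\left(\alpha(t),\alpha(t+h)\right)}=\min\left(|h|^{\alpha(t)},|h|^{\alpha(t+h)}\right)$ for $|h|\leq1$, yielding the exact identity $N_{3}=\max\left(N_{1},N_{2}\right)$, is in fact sharper and more explicit than the paper's treatment, which merely asserts that the second equivalence ``follows in the same way as the previous argument.''
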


\begin{proof}
The first equivalence is a simple consequence of the continuity assumption
on $\alpha$, i.e consider for some very small $\epsilon$ it follows
that 
\[
\frac{|f_{t,t+h}|}{|h|^{\alpha\left(t+h\right)}}=\frac{|f_{t,t+h}|}{|h|^{\alpha\left(t\right)}}|h|^{\alpha\left(t\right)-\alpha\left(t+h\right)},
\]
and the equivalence follows taking supremum on both sides and using
\[
\sup_{t\in\left[0,T\right];|h|\leq1}|h|^{\alpha\left(t\right)-\alpha\left(t+h\right)}\leq C.
\]
 The equivalence 
\[
\sup_{0\leq t\leq T;|h|\leq1}\frac{|f_{t,t+h}|}{|h|^{\alpha\left(t+h\right)}}\sim\sup_{0\leq s<t\leq T;|t-s|\leq1}\frac{|f_{t,t+h}|}{|t-s|^{\max\left(\alpha\left(t\right),\alpha\left(s\right)\right)}}
\]
 follows in the same way as the previous argument. 
\end{proof}
We will need an important scaling property of variable order Hölder
norms, which is a simple generalization of Exercise 4.24in \cite{FriHai}. 
\begin{lem}
\label{lem: Scaling of H=0000F6lder norms}Assume $g\in\mathcal{C}^{\alpha\left(\cdot\right)}\left(\left[s,t\right]\right)$
for some regularity function $\alpha:\left[0,T\right]\rightarrow\left(0,1\right)$
and for any $s<t\in\left[0,T\right]$ where $0<|t-s|\leq h<1$ such
that there exists an $M>0$ with 
\[
\sup_{0<s<t<T;|t-s|\leq h}\frac{|g_{s,t}|}{|t-s|^{\alpha\left(s\right)}}\leq M.
\]
 Then $g\in\mathcal{C}^{\alpha\left(\cdot\right)}\left(\left[0,T\right]\right)$
with 
\[
\parallel g\parallel_{\alpha\left(\cdot\right);\left[0,T\right]}\leq M\left(1\vee2h^{-1}\right).
\]
\end{lem}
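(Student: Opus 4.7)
The plan is to reduce the global bound to the local hypothesis by chaining small increments, splitting the argument according to whether $|t-s|\le h$ or $h<|t-s|\le 1$. In the first regime the hypothesis directly gives $|g_{s,t}|/|t-s|^{\alpha(s)}\le M$, which accounts for the ``$1$'' in $1\vee 2h^{-1}$.

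For $h<|t-s|\le 1$, I would introduce the uniform partition of $[s,t]$ into $N:=\lceil(t-s)/h\rceil$ equal subintervals $t_i=s+i\ell$, where $\ell=(t-s)/N\le h$, so that the hypothesis applies on each piece. The triangle inequality then yields
\[
|g_{s,t}| \;\le\; \sum_{i=0}^{N-1}|g_{t_i,t_{i+1}}| \;\le\; M\sum_{i=0}^{N-1}\ell^{\alpha(t_i)}.
\]
Because $\ell\le 1$ and $\alpha(t_i)>0$, each summand is bounded by $1$, giving the crude but sufficient estimate $|g_{s,t}|\le NM$.

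To close, I would use $N \le (t-s)/h+1 \le 2(t-s)/h$ (the second inequality valid because $(t-s)/h\ge 1$ in this regime) together with $\alpha(s)\le 1$ and $|t-s|\le 1$ to compute
\[
\frac{|g_{s,t}|}{|t-s|^{\alpha(s)}} \;\le\; \frac{MN}{(t-s)^{\alpha(s)}} \;\le\; \frac{2M\,(t-s)^{1-\alpha(s)}}{h} \;\le\; \frac{2M}{h}.
\]
Combining the two regimes yields $\|g\|_{\alpha(\cdot);[0,T]}\le M(1\vee 2h^{-1})$. This is essentially the standard ``small-to-large'' H\"older argument (cf. Exercise 4.24 in \cite{FriHai}); the only variable-order subtlety is that the exponents $\alpha(t_i)$ in each summand may differ from $\alpha(s)$, but this is painlessly absorbed by the crude estimate $\ell^{\alpha(t_i)}\le 1$ together with $(t-s)^{1-\alpha(s)}\le 1$. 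Accordingly, no essential use is made of condition A1 for this lemma, and I do not foresee a serious obstacle in the proof.
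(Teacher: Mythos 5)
Your proof is correct and follows essentially the same route as the paper's: chain increments of length at most $h$ over a partition of $[s,t]$, bound each local increment by $M$ via $\ell^{\alpha(t_i)}\le 1$, and use $N\le 1+(t-s)/h\le 2(t-s)/h$ together with $(t-s)^{1-\alpha(s)}\le 1$ to absorb the count into the factor $2h^{-1}$. The only cosmetic difference is that you use $N$ equal subintervals of length $(t-s)/N\le h$ while the paper uses subintervals of length exactly $h$ (with a shorter last piece); this changes nothing, and your observation that condition A1 is not needed here is accurate.
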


\begin{proof}
Without loss of generality, we may consider $M=1$, and write $\alpha_{i}=\inf\left\{ \alpha\left(t\right);t\in\left[t_{i-1},t_{i}\right]\right\} $.
For fixed $0\leq s<t\leq T$ with $|t-s|\geq h$, divide $\left[s,t\right]$
into $N=\inf\left\{ k\in\mathbb{N};\frac{|t-s|}{h}\leq k\right\} $
intervals by setting $t_{i}=\left(s+ih\right)\wedge t$ for $i=1,..,N$
. Then clearly $|t_{i}-t_{i-1}|\leq h$, and $t_{N}=t$ . We can then
write 
\[
|g_{s,t}|\leq\sum_{i=1}^{N}|g_{t_{i-1},t_{i}}|\leq M\sum_{i=1}^{N}|h|^{\alpha_{i}}\leq M\left(1+\frac{|t-s|}{h}\right),
\]
since $N\leq1+\frac{|t-s|}{h}$. Furthermore, $M\left(1+\frac{|t-s|}{h}\right)=Mh^{-1}\left(h+|t-s|\right)\leq M2h^{-1}|t-s|$.
Taking supreme over $s,t\in\left[0,T\right]$, we get 
\[
\parallel g\parallel_{\alpha\left(\cdot\right);\left[0,T\right]}=\sup_{0\leq s<t\leq T}\frac{|g_{s,t}|}{|t-s|^{\alpha\left(s\right)}}\leq M\left(1\vee2h^{-1}\right).
\]
\end{proof}

\subsection{Sewing lemma for variable order paths}

The sewing lemma is a crucial piece of the theory of rough paths,
and gives us a construction of abstract integration maps. Although
the generalization to variable order Hölder paths is a quite simple
procedure, it is important for variable order paths that the integration
depend on the local regularity of the integrand and integrator, rather
than global regularity as in standard rough path theory. Similarly
with the notation and definitions found in \cite{FriHai}, we introduce
the space $\mathcal{C}_{2}^{\alpha\left(\cdot\right),\beta\left(\cdot\right)}\left(\left[0,T\right];\mathbb{R}^{d}\right)$
being defined by the norm 
\[
\parallel f\parallel_{\alpha\left(\cdot\right),\beta\left(\cdot\right);\left[0,T\right]}:=\sup_{u\in\left(s,s+h\right);|h|\leq1}\frac{|f_{s,s+h}|}{|h|^{\alpha\left(s\right)}}+\sup_{s<u<t\in\left[0,T\right]}\frac{|\left(\delta f\right)_{s,u,t}|}{|t-s|^{\beta\left(s\right)}}
\]
\[
=:\parallel f\parallel_{\alpha\left(\cdot\right);\left[0,T\right]}+\parallel\delta f\parallel_{\beta\left(\cdot\right);\left[0,T\right]},
\]
where $\alpha:\left[0,T\right]\rightarrow\left(0,1\right)$ is a $C^{1}$
function, and $\beta:\left[0,T\right]\rightarrow[\beta_{*},\infty)\subset(1,\infty)$
and the operator $\delta$ is such that 
\[
\left(\delta f\right)_{s,u,t}=f_{s,t}-f_{s,u}-f_{u,t}.
\]
 With this definition, we recite the sewing lemma customized to variable
order integrands. 
\begin{lem}
\label{lem:-SEWING LEMMA}$\left(Sewing\,\,\,Lemma\right)$ Let $\alpha:\left[0,T\right]\rightarrow\left(0,1\right)$
and $\beta:\left[0,T\right]\rightarrow[\beta_{*},\infty)\subset(1,\infty)$
be two $C^{1}$ functions. Then there exists a unique continuous map
\[
\mathcal{I}:\mathcal{C}_{2}^{\alpha\left(\cdot\right),\beta}\left(\left[0,T\right];\mathbb{R}^{d}\right)\rightarrow\mathcal{C}^{\alpha\left(\cdot\right)}\left(\left[0,T\right];\mathbb{R}^{d}\right)
\]
 with $\left(\mathcal{I}\varXi\right)_{0}$ and satisfies 
\[
|\left(\mathcal{I}\varXi\right)_{s,t}-\varXi_{s,t}|\leq C_{\beta}\parallel\delta\varXi\parallel_{\beta\left(\cdot\right);\left[0,T\right]}|t-s|^{\beta_{*}},
\]
 where $C$ depends only on $\beta$ and $\parallel\delta\varXi\parallel_{\beta\left(\cdot\right);\left[0,T\right]}$.
\end{lem}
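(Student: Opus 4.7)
The plan is to follow the classical Riemann-sum proof of the Sewing Lemma (as in \cite{FriHai}, Theorem 4.10), but to carefully keep the exponent local where it matters, using the fact that the uniform lower bound $\beta_{*}>1$ alone suffices to drive the convergence of partition sums while the output inherits the variable exponent $\alpha(\cdot)$ by a simple comparison of scales. Throughout one works with $|t-s|\le 1$, so that $|t-s|^{\beta(\cdot)}\le |t-s|^{\beta_{*}}$, and the global conclusion is then obtained from this local estimate by invoking Lemma~\ref{lem: Scaling of H=0000F6lder norms}.

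\textbf{Construction of $\mathcal{I}\varXi$ and the error bound.} Fix $s<t$ with $|t-s|\le 1$ and a partition $\pi=\{s=t_{0}<\cdots<t_{N}=t\}$, and set $\varXi^{\pi}_{s,t}:=\sum_{i=0}^{N-1}\varXi_{t_{i},t_{i+1}}$. I would use the standard successive-refinement procedure, repeatedly removing the point $t_{i}$ with smallest combined gap $t_{i+1}-t_{i-1}$; a pigeonhole argument shows this gap is at most $\frac{2|t-s|}{N-k}$ at step $k$. Removing $t_{i}$ changes $\varXi^{\pi}_{s,t}$ by exactly $(\delta\varXi)_{t_{i-1},t_{i},t_{i+1}}$, bounded by $\|\delta\varXi\|_{\beta(\cdot);[0,T]}\,(t_{i+1}-t_{i-1})^{\beta(t_{i-1})}\le \|\delta\varXi\|_{\beta(\cdot);[0,T]}\,(t_{i+1}-t_{i-1})^{\beta_{*}}$. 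Summing yields
\[
|\varXi^{\pi}_{s,t}-\varXi_{s,t}|\le 2^{\beta_{*}}\zeta(\beta_{*})\,\|\delta\varXi\|_{\beta(\cdot);[0,T]}\,|t-s|^{\beta_{*}},
\]
which also gives the Cauchy property of $\{\varXi^{\pi}_{s,t}\}$ as the mesh shrinks (comparison is via common refinements). Denote the limit by $(\mathcal{I}\varXi)_{s,t}$; it is additive since Chasles' relation passes to the limit, so $(\mathcal{I}\varXi)_{t}:=(\mathcal{I}\varXi)_{0,t}$ defines a path with $(\mathcal{I}\varXi)_{0}=0$ whose increments satisfy the asserted bound with $C_{\beta}=2^{\beta_{*}}\zeta(\beta_{*})$.

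\textbf{Variable-order H\"older regularity and uniqueness.} For $|t-s|\le 1$ one has
\[
|(\mathcal{I}\varXi)_{s,t}|\le |\varXi_{s,t}|+C_{\beta}\,\|\delta\varXi\|_{\beta(\cdot);[0,T]}\,|t-s|^{\beta_{*}}\le \bigl(\|\varXi\|_{\alpha(\cdot);[0,T]}+C_{\beta}\|\delta\varXi\|_{\beta(\cdot);[0,T]}\bigr)|t-s|^{\alpha(s)},
\]
after absorbing $|t-s|^{\beta_{*}-\alpha(s)}\le 1$ using $\beta_{*}>1>\alpha(s)$. Lemma~\ref{lem: Scaling of H=0000F6lder norms} then promotes this estimate on intervals of length $\le 1$ to a genuine variable-order H\"older norm on all of $[0,T]$, giving $\mathcal{I}\varXi\in\mathcal{C}^{\alpha(\cdot)}$. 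Uniqueness is standard: any other candidate $\mathcal{J}\varXi$ satisfies the same bound, so the difference would be an additive path with increments of order $|t-s|^{\beta_{*}}$ and $\beta_{*}>1$, forcing it to be constant, and equal to $0$ by the initial condition.

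The main technical obstacle is the clean separation of the two exponents: the error estimate has to be measured at the uniform scale $|t-s|^{\beta_{*}}$ in order to produce a summable series at the refinement step, while the target regularity of $\mathcal{I}\varXi$ must remain the \emph{local} $\alpha(\cdot)$. The inequality $\beta_{*}>1>\alpha(s)$ decouples these two scales on small intervals, and Lemma~\ref{lem: Scaling of H=0000F6lder norms} carries the local estimate over to the full domain without collapsing $\alpha(\cdot)$ to its infimum---this is precisely what distinguishes the variable-order Sewing Lemma from its constant-order counterpart.
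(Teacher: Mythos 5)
Your proof is correct, but it takes a different (equally classical) route from the paper. The paper constructs $\mathcal{I}\varXi$ along dyadic partitions of $[s,t]$, obtaining the telescoping bound $|(\mathcal{I}^{n+1}\varXi)_{s,t}-(\mathcal{I}^{n}\varXi)_{s,t}|\leq \parallel\delta\varXi\parallel_{\beta(\cdot);[s,t]}2^{-n(\beta_{*}-1)}|t-s|^{\beta_{*}}$ and the constant $C_{\beta_{*}}=\sum_{n\geq0}2^{-n(\beta_{*}-1)}$, and then defers the passage from dyadic to arbitrary partitions to the proof of Lemma 4.2 in \cite{FriHai}. You instead use the Young-type successive point-removal argument, which handles arbitrary partitions directly via the pigeonhole bound and yields the constant $2^{\beta_{*}}\zeta(\beta_{*})$; the price is the slightly more delicate bookkeeping of the removal step (your gap bound should read $\frac{2|t-s|}{N-k-1}$, counting interior points, though this does not affect the convergence of the zeta series), and the mesh-convergence via common refinements that you only sketch. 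Both arguments collapse the variable exponent to its infimum at the same moment --- replacing $(v-u)^{\beta(u)}$ by $(v-u)^{\beta_{*}}$ for $v-u\leq1$ --- so the essential hypothesis in either case is just $\beta_{*}>1$; your closing passage, recovering the $\alpha(\cdot)$-regularity of $t\mapsto(\mathcal{I}\varXi)_{0,t}$ from $|\varXi_{s,t}|\leq\parallel\varXi\parallel_{\alpha(\cdot)}|t-s|^{\alpha(s)}$ plus $\beta_{*}>1>\alpha(s)$ and Lemma \ref{lem: Scaling of H=0000F6lder norms}, matches the remark the paper makes after the lemma. What your route buys is self-containedness (no external citation needed for general partitions) and an explicit constant; what the paper's route buys is a shorter write-up at the cost of outsourcing the final step.
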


\begin{proof}
We will use the proof of the sewing lemma found in \cite{FriHai}
(Lemma 4.2), and modify this where it is needed. Instead of reciting
all of the steps from this proof, we will only show the steps which
is crucial to us, and encourage the interested reader to confirm the
rest of the proof in \cite{FriHai}. 

Consider a dyadic partition $\mathcal{P}_{n}$ of $\left[s,t\right]$
defined iterativly by setting $\mathcal{P}_{0}=\left\{ \left[s,t\right]\right\} $
and 
\[
\mathcal{P}_{n+1}=\bigcup_{\left[u,v\right]\in\mathcal{P}_{n}}\left\{ \left[u,m\right],\left[m,v\right]\right\} ,
\]
where $m:=\frac{u+v}{2}$. The partition $\mathcal{P}_{n}$ consists
of $2^{n}$ intervals of size $2^{-n}|t-s|$ . Furthermore, define
the integral operator $\left(\mathcal{I}^{0}\varXi\right)_{s,t}=\varXi_{s,t}$,
and iterativly 
\[
\left(\mathcal{I}^{n}\varXi\right)_{s,t}=\sum_{\left[u,v\right]\in\mathcal{P}_{n}}\varXi_{u,v},
\]
and note that 
\[
\left(\mathcal{I}^{n+1}\varXi\right)_{s,t}=\sum_{\left[u,v\right]\in\mathcal{P}_{n+1}}\varXi_{u,v}=\sum_{\left[u,v\right]\in\mathcal{P}_{n}}\varXi_{u,m}+\varXi_{m,v}
\]
\[
=\sum_{\left[u,v\right]\in\mathcal{P}_{n}}\varXi_{u,v}-\sum_{\left[u,v\right]\in\mathcal{P}_{n}}\left(\delta\varXi\right)_{u,m,v}.
\]
and it follows 
\[
|\left(\mathcal{I}^{n+1}\varXi\right)_{s,t}-\left(\mathcal{I}^{n}\varXi\right)_{s,t}|\leq|\sum_{\left[u,v\right]\in\mathcal{P}_{n}}\left(\delta\varXi\right)_{u,m,v}|,
\]
 using variable order Hölder norms it is clear that 
\[
\leq\sum_{\left[u,v\right]\in\mathcal{P}_{n}}\parallel\left(\delta\varXi\right)\parallel_{\beta\left(\cdot\right);\left[u,v\right]}|v-u|^{\beta\left(u\right)}\leq\parallel\left(\delta\varXi\right)\parallel_{\beta\left(\cdot\right);\left[s,t\right]}\sum_{\left[u,v\right]\in\mathcal{P}_{n}}|v-u|^{\beta\left(u\right)}
\]
\[
\leq\parallel\left(\delta\varXi\right)\parallel_{\beta\left(\cdot\right);\left[s,t\right]}2^{-n\left(\beta_{*}-1\right)}|t-s|^{\beta_{*}}
\]
 which imply that the sequence of operators $\mathcal{I}^{n}$ is
Cauchy, and it follows that 
\[
|\left(\mathcal{I}\varXi\right)_{s,t}-\varXi_{s,t}|\leq\sum_{n\geq0}|\left(\mathcal{I}^{n+1}\varXi\right)_{s,t}-\left(\mathcal{I}^{n}\varXi\right)_{s,t}|
\]
\[
\leq C_{\beta_{*}}\parallel\left(\delta\varXi\right)\parallel_{\beta\left(\cdot\right);\left[s,t\right]}|t-s|^{\beta_{*}},
\]
where $C_{\beta_{*}}=\sum_{n\geq0}2^{-n\left(\beta_{*}-1\right)}<\infty$.
Next one needs to check that the above inequality holds also for arbitrary
partitions$\mathcal{P}$ of $\left[s,t\right]$ when $|\mathcal{P}|\rightarrow0$.
This follows from the proof in \cite{FriHai} Lemma 4.2. 
\end{proof}
\begin{rem}
Notice that by the above inequality we have 
\[
|\left(\mathcal{I}\varXi\right)_{s,t}|\lesssim|t-s|^{\alpha\left(s\right)}.
\]
Indeed, the mapping $t\mapsto\left(\mathcal{I}\varXi\right)_{0,t}\in\mathcal{C}^{\alpha\left(\cdot\right)}\left(\left[0,T\right];\mathbb{R}^{d}\right)$
is a consequence of the assumption that $t\mapsto\varXi_{0,t}\in\mathcal{C}^{\alpha\left(\cdot\right)}\left(\left[0,T\right];\mathbb{R}^{d}\right)$
. The abstract integration map is constructed as a Riemann type of
integral of the function $\varXi$, i.e 
\[
\left(\mathcal{I}\varXi\right)_{s,t}:=\lim_{|\mathcal{P}|\rightarrow0}\sum_{\left[u,v\right]\in\mathcal{P}}\varXi_{u,v}.
\]
\end{rem}

\section{Young theory for variable order paths with applications to delay
equations\label{sec:yOUNG-delay-equations}}

The first goal of this section is to show the construction of integrals
of the form 
\[
\int_{0}^{t}Y_{r}dX_{r}
\]
 when $Y\in\mathcal{C}^{\gamma\left(\cdot\right)}$ and $X\in\mathcal{C}^{\alpha\left(\cdot\right)}$
by using the sewing lemma proved in the last section. As discussed
in the introduction, if assumed that $\inf_{t\in\left[0,T\right]}\left\{ \alpha\left(t\right)\right\} +\inf_{t\in\left[0,T\right]}\left\{ \gamma\left(t\right)\right\} >1$,
we could construct Young integrals just as usual. However, as $\alpha$
and $\gamma$ may reach its minimum at completely different times,
the regularization effect of one path on the other is completely ignored.
Essentially we will be forced to only consider $\alpha$ and $\gamma$
such that $\alpha_{*}\in\left(1-\gamma_{*},1\right)$. However, by
combining the regularity functions $\alpha\left(t\right)+\gamma\left(t\right)$
the sum can easily be above $1$ for all $t$ even though the individual
functions may take values between $\left(0,1\right)$. We will show
that when considering Hölder spaces of variable order $\mathcal{C}^{\alpha\left(\cdot\right)}$
and $\mathcal{C}^{\gamma\left(\cdot\right)}$ a criterion for integration
in the Young sense is that 
\[
\beta:=\inf_{t\in\left[0,T\right]}\left\{ \alpha\left(t\right)+\gamma\left(t\right)\right\} >1.
\]
We give the following proposition for Young integration of variable
order
\begin{prop}
\label{prop:Young integrals of variable order}Consider $Y\in\mathcal{C}^{\gamma\left(\cdot\right)}$
and $X\in\mathcal{C}^{\alpha\left(\cdot\right)}$, and define $\varXi_{s,t}=Y_{s}X_{s,t}$,
then 

\[
|\left(\mathcal{I}\varXi\right)_{s,t}-\varXi_{s,t}|\leq C_{\beta}\parallel Y\parallel_{\gamma\left(\cdot\right);\left[s,t\right]}\parallel X\parallel_{\alpha\left(\cdot\right);\left[s,t\right]}|t-s|^{\beta},
\]
where $\beta=\inf_{t\in\left[0,T\right]}\left\{ \alpha\left(t\right)+\gamma\left(t\right)\right\} $. 
\end{prop}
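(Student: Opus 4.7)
The plan is to apply the Sewing Lemma (Lemma \ref{lem:-SEWING LEMMA}) to the two-parameter germ $\Xi_{s,t}=Y_sX_{s,t}$. For this I need to verify that $\Xi\in\mathcal{C}_2^{\alpha(\cdot),\beta(\cdot)}([0,T];\mathbb{R}^d)$ for the regularity function $\beta(t):=\gamma(t)+\alpha(t)$, whose infimum is exactly the number $\beta>1$ appearing in the statement. The control on $\Xi$ itself in $\mathcal{C}_2^{\alpha(\cdot)}$ is immediate, since continuity of $Y$ on the compact interval $[0,T]$ gives a uniform bound $\|Y\|_\infty<\infty$, whence $|\Xi_{s,t}|\leq\|Y\|_\infty\|X\|_{\alpha(\cdot);[s,t]}|t-s|^{\alpha(s)}$. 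The real content lies in estimating $\delta\Xi$.

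First I would compute $(\delta\Xi)_{s,u,t}$ using the trivial additivity $X_{s,t}=X_{s,u}+X_{u,t}$. A direct substitution yields
\[
(\delta\Xi)_{s,u,t}=Y_sX_{s,t}-Y_sX_{s,u}-Y_uX_{u,t}=-Y_{s,u}X_{u,t}.
\]
Applying the variable order H\"older seminorms on $[s,t]$ gives
\[
|(\delta\Xi)_{s,u,t}|\leq\|Y\|_{\gamma(\cdot);[s,t]}\|X\|_{\alpha(\cdot);[s,t]}|u-s|^{\gamma(s)}|t-u|^{\alpha(u)}.
\]
Since $|u-s|,|t-u|\leq|t-s|\leq 1$, this is bounded by $\|Y\|_{\gamma(\cdot);[s,t]}\|X\|_{\alpha(\cdot);[s,t]}|t-s|^{\gamma(s)+\alpha(u)}$.

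The main technical step, and the place where assumption (A1) is essential, is converting the exponent $\gamma(s)+\alpha(u)$ into $\beta(s)=\gamma(s)+\alpha(s)$, so that the sewing hypothesis holds with a regularity function bounded below by $\beta>1$. I would invoke (A1) at the base point $u$ with increment $h=s-u$ to obtain $|u-s|^{\alpha(u)-\alpha(s)}\leq C$ uniformly in $s<u$. When $\alpha(u)\geq\alpha(s)$ the factor $|t-s|^{\alpha(u)-\alpha(s)}\leq 1$ trivially; when $\alpha(u)<\alpha(s)$ the inequality $|t-s|\geq|u-s|$ combined with the previous bound rewritten as $|u-s|^{\alpha(s)-\alpha(u)}\geq 1/C$ gives $|t-s|^{\alpha(s)-\alpha(u)}\geq 1/C$, i.e.\ $|t-s|^{\alpha(u)-\alpha(s)}\leq C$. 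Inserting this yields
\[
|(\delta\Xi)_{s,u,t}|\leq C\,\|Y\|_{\gamma(\cdot);[s,t]}\|X\|_{\alpha(\cdot);[s,t]}|t-s|^{\beta(s)},
\]
with $\beta(s)\geq\beta>1$, so that $\delta\Xi\in\mathcal{C}_2^{\beta(\cdot)}$ with norm controlled by the product of the two H\"older norms.

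Having verified the hypotheses, the Sewing Lemma applied with lower bound $\beta_*=\beta$ produces the abstract integration map $\mathcal{I}$ together with the bound $|(\mathcal{I}\Xi)_{s,t}-\Xi_{s,t}|\leq C_\beta\|\delta\Xi\|_{\beta(\cdot);[s,t]}|t-s|^\beta$. Substituting the bound on $\|\delta\Xi\|_{\beta(\cdot);[s,t]}$ established in the previous paragraph delivers the claimed inequality. Everything else reduces to bookkeeping; the only nontrivial obstacle is the exponent swap via (A1), and that is precisely what allows the local complementary requirement $\inf_t(\alpha(t)+\gamma(t))>1$ to substitute for the classical global one.
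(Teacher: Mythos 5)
Your proposal is correct and follows essentially the same route as the paper: compute $(\delta\varXi)_{s,u,t}=-Y_{s,u}X_{u,t}$, verify the sewing hypotheses with the regularity function $\beta(\cdot)=\alpha(\cdot)+\gamma(\cdot)$, handle the mismatch of base points in the exponents via the assumption \textbf{A1} (which is exactly what underlies the norm-equivalence Proposition \ref{prop:Equivalence of Variable H=0000F6lder norm} invoked in the paper), and conclude with Lemma \ref{lem:-SEWING LEMMA}. Your exponent-swap argument is in fact spelled out more carefully than in the paper's own proof, which leaves that step as a claimed finiteness of a supremum.
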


\begin{proof}
First, see that $\left(\delta\varXi\right)_{s,u,t}=-Y_{s,u}X_{u,t}$,
and by standard estimates it follows that 
\[
\parallel\varXi\parallel_{\alpha\left(\cdot\right);\left[0,T\right]}<\infty,
\]
as the regularity of the function is inherited by the driver $X\in\mathcal{C}^{\alpha\left(\cdot\right)}$.
We must check that for $\beta\left(t\right)=\alpha\left(t\right)+\gamma\left(t\right),$
\[
\parallel\delta\varXi\parallel_{\beta\left(\cdot\right);\left[0,T\right]}<\infty.
\]
We know from Lemma \ref{prop:Equivalence of Variable H=0000F6lder norm}
that for $Z\in\mathcal{C}^{h\left(\cdot\right)}$, 
\[
\sup_{s<t\in\left[0,T\right];|t-s|\leq1}\frac{|Z_{s,t}|}{|t-s|^{h(t)}}\sim\sup_{s<t\in\left[0,T\right];|t-s|\leq1}\frac{|Z_{s,t}|}{|t-s|^{h(s)}}\,\,\,for\,\,\,|t-s|\leq1,
\]
and therefore, we can see that 
\[
\sup_{s<u<t}\frac{|\left(\delta\varXi\right)_{s,u,t}|}{|t-s|^{\alpha\left(s\right)+\beta\left(s\right)}}
\]
\[
\leq\parallel Y\parallel_{\gamma\left(\cdot\right)}\parallel X\parallel_{\alpha\left(\cdot\right)}\sup_{s<u<t}\frac{|u-s|^{\gamma\left(u\right)}|t-u|^{\alpha\left(u\right)}}{|t-s|^{\alpha\left(r\right)+\gamma\left(r\right)}}<\infty,
\]
which gives us the desired result in combination with Lemma \ref{lem:-SEWING LEMMA}. 
\end{proof}
With the above result, it is clear that we can define integrals of
Young type for variable order paths, only assuming that the regularity
function of the two paths enjoys complementary Young regularity for
each $t\in\left[0,T\right]$. Next we will use the above integral
construction to show existence and uniqueness of delay equations driven
by variable order paths. Let us first give a definition of a proper
set of regularity functions suitable for the study of delay equations. 
\begin{defn}
\label{def:Regularity function admitting Young delay}Let $\tau=\left(\tau_{1},...,\tau_{k}\right)\in\triangle^{(k)}\left(0,T\right)$,
an let $\Sigma_{\tau}\left(\left[0,T\right];\left(0,1\right)\right)$
denote a subspace of $C^{1}\left(\left[0,T\right];\left(0,1\right)\right)$
defined by all functions $\alpha:\left[0,T\right]\rightarrow\left(0,1\right)$,
satisfying $\mathbf{A1}$ with the property that 
\[
\inf_{t\in\left[\tau,T\right];i\in\left\{ 1,..,k\right\} }\left\{ \alpha\left(t\right)+\alpha\left(t-\tau_{i}\right)\right\} >1.
\]
We say that $\alpha\in\Sigma_{\tau}$ is a regularity function admitting
Young delay. 
\end{defn}

The next Lemma gives us some à-priori bounds of the solution to delayed
Young differential equations driven by variable order noise. 
\begin{lem}
\label{lem:Apriori Bounds on delayed SDE}Let $T=1,$and assume $\alpha$
is the same as in Theorem \ref{thm:Delayed Young RDE}. Consider two
paths $Y\in\mathcal{C}^{\alpha\left(\cdot\right)}\left(\left[0,T\right];\mathbb{R}^{m}\right)$,
and $X\in\mathcal{C}^{\alpha\left(\cdot\right)}\left(\left[0,T\right];\mathbb{R}^{d}\right)$
where $X$ and $Y$ are related through the delayed RDE 
\[
Y_{t}=\xi_{0}+\int_{0}^{t}f\left(Y_{t-\tau_{1}},Y_{t-\tau_{2}},..,Y_{t-\tau_{k}}\right)dX_{t}\,\,\,Y_{u}=\xi_{u}\,\,for\,\,\,u\in\left[-\tau_{k},0\right],
\]
~where $f\in C_{b}^{1}\left(\mathbb{R}^{m};\mathbb{R}^{d\times m}\right)$,
and $\xi\in C^{1}\left(\left[-\tau_{k},0\right];\mathbb{R}^{d}\right)$
and the equation is interpreted in the Young integral sense. Then
we have the bound 
\[
\parallel Y\parallel_{\alpha\left(\cdot\right);\left[0,T\right]}\leq C_{f,\beta}\left(\parallel X\parallel_{\alpha\left(\cdot\right);\left[0,T\right]}\vee\parallel X\parallel_{\alpha\left(\cdot\right);\left[0,T\right]}^{\frac{1}{\gamma}-1}\right)
\]
where 
\[
\gamma:=\left(\inf_{t\in\left[\tau_{k},T\right];i\in\left\{ 1,..,k\right\} }\left\{ \alpha\left(t\right)+\alpha\left(t-\tau_{i}\right)\right\} -\sup_{t\in\left[0,T\right]}\left\{ \alpha\left(t\right)\right\} \right)\in\left(0,1\right).
\]
 
\end{lem}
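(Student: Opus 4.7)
The plan is to run the standard Young/rough a-priori bound, but paying attention to the variable-order exponents and to the delay structure. The key is to exploit that the remainder in the Young estimate carries a higher power of $|t-s|$, which after dividing by $|t-s|^{\alpha(s)}$ leaves a factor $|t-s|^{\gamma}$ that can be absorbed on a sufficiently short interval; a final application of the scaling lemma (Lemma \ref{lem: Scaling of H=0000F6lder norms}) then transfers the estimate to $[0,T]$.

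First I would freeze the driver of the Young integral: setting
\[
\varXi_{s,t}:=f\bigl(Y_{s-\tau_{1}},\dots,Y_{s-\tau_{k}}\bigr)X_{s,t},
\]
Proposition \ref{prop:Young integrals of variable order} applied componentwise (together with the sewing lemma) gives
\[
\Bigl|Y_{s,t}-\varXi_{s,t}\Bigr|\leq C_{\beta_{*}}\parallel\delta\varXi\parallel_{\beta(\cdot);[s,t]}|t-s|^{\beta_{*}},
\]
where $\beta(t):=\min_{i}\{\alpha(t)+\alpha(t-\tau_{i})\}$ and $\beta_{*}:=\inf_{t\in[\tau_{k},T]}\beta(t)>1$ by the Definition \ref{def:Regularity function admitting Young delay}. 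To bound $\parallel\delta\varXi\parallel_{\beta(\cdot)}$, I would use that $f\in C_{b}^{1}$ to get $|f(Y_{u-\tau})-f(Y_{s-\tau})|\leq\|\nabla f\|_{\infty}\sum_{i}|Y_{u-\tau_{i},s-\tau_{i}}|$, together with $|Y_{u-\tau_{i},s-\tau_{i}}|\leq\parallel Y\parallel_{\alpha(\cdot)}|u-s|^{\alpha(s-\tau_{i})}$ and $|X_{u,t}|\leq\parallel X\parallel_{\alpha(\cdot)}|t-s|^{\alpha(u)}$. Assumption \textbf{A1} lets me replace $|t-s|^{\alpha(u)}$ by $|t-s|^{\alpha(s)}$ up to a constant, yielding
\[
\parallel\delta\varXi\parallel_{\beta(\cdot);[s,t]}\leq C_{f}\parallel Y\parallel_{\alpha(\cdot);[s,t]}\parallel X\parallel_{\alpha(\cdot);[s,t]}.
\]

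Next, by the triangle inequality $|Y_{s,t}|\leq\|f\|_{\infty}\parallel X\parallel_{\alpha(\cdot)}|t-s|^{\alpha(s)}+C_{f,\beta_{*}}\parallel Y\parallel_{\alpha(\cdot)}\parallel X\parallel_{\alpha(\cdot)}|t-s|^{\beta_{*}}$. Dividing by $|t-s|^{\alpha(s)}$ and noting that $\beta_{*}-\alpha(s)\geq\beta_{*}-\sup_{t}\alpha(t)=\gamma>0$, I obtain, for all $s<t$ with $|t-s|\leq h\leq1$,
\[
\frac{|Y_{s,t}|}{|t-s|^{\alpha(s)}}\leq\|f\|_{\infty}\parallel X\parallel_{\alpha(\cdot)}+C_{f,\beta_{*}}h^{\gamma}\parallel X\parallel_{\alpha(\cdot)}\parallel Y\parallel_{\alpha(\cdot);h}.
\]
Choosing $h:=\bigl(2C_{f,\beta_{*}}\parallel X\parallel_{\alpha(\cdot)}\bigr)^{-1/\gamma}\wedge1$ absorbs the last term into the left-hand side, giving a local bound of the form
\[
\parallel Y\parallel_{\alpha(\cdot);h}\leq 2\|f\|_{\infty}\parallel X\parallel_{\alpha(\cdot)}.
\]

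Finally, I would invoke Lemma \ref{lem: Scaling of H=0000F6lder norms} to upgrade the local bound to a global one, obtaining
\[
\parallel Y\parallel_{\alpha(\cdot);[0,T]}\leq\parallel Y\parallel_{\alpha(\cdot);h}\bigl(1\vee2h^{-1}\bigr)\leq C_{f,\beta}\Bigl(\parallel X\parallel_{\alpha(\cdot)}\vee\parallel X\parallel_{\alpha(\cdot)}^{\frac{1}{\gamma}-1}\Bigr),
\]
after collecting constants (using $T=1$ to avoid boundary issues in the scaling step). The main subtlety, and where I would expect to spend the most care, is the bookkeeping between the different time arguments of $\alpha$: translating $\alpha(u)$, $\alpha(s-\tau_{i})$, and $\alpha(s)$ into a single exponent uniformly bounded below by $\gamma$ requires repeated use of Assumption \textbf{A1} and the equivalence of norms from Proposition \ref{prop:Equivalence of Variable H=0000F6lder norm}; once that is done, the Picard-type absorption and the scaling lemma are routine.
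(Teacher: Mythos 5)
Your proposal is correct and follows essentially the same route as the paper's own proof: freeze the integrand at $s$ to form $\varXi_{s,t}=f(Y_{s-\tau_{1}},\dots,Y_{s-\tau_{k}})X_{s,t}$, invoke Proposition \ref{prop:Young integrals of variable order} with the Lipschitz (telescoping) estimate on $f$ to control the remainder at order $|t-s|^{\beta}$, absorb the $\parallel Y\parallel$-term on intervals of length $h\propto\parallel X\parallel^{-1/\gamma}$, and finish with Lemma \ref{lem: Scaling of H=0000F6lder norms}. The only detail you omit is the trivial extension of $\alpha$ to $[-\tau_{k},0)$ accounting for the $C^{1}$ initial path $\xi$, which is a minor bookkeeping point rather than a gap.
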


\begin{proof}
First, do a trivial extension of $\alpha$ such that $\alpha:\left[-\tau_{k},T\right]\rightarrow\left(0,1\right)$
where $\alpha\left(u\right)=1$ when $u\in[-\tau_{k},0)$ and $\alpha(u)=\alpha\left(u\right)$
when $u\in\left[0,T\right]$, this is to account for the $C^{1}$
nature of the initial function $\xi$. We write 
\[
Y_{t}=\xi_{0}+\int_{0}^{t}f\left(Y_{r-\tau_{1}},..,Y_{r-\tau_{k}}\right)dX_{t},
\]
and 
\[
Y_{s,t}=\int_{s}^{t}f\left(Y_{r-\tau_{1}},..,Y_{r-\tau_{k}}\right)dX_{t}
\]
 and remember that $Y_{u}=\xi_{u}$ on $[-\tau,0]$. Using that 
\[
f\left(x_{1},...x_{k}\right)-f\left(y_{1},..,y_{k}\right)
\]
\[
=\sum_{i=1}^{d}f\left(x_{1},.,x_{i},y_{i+1},..y_{k}\right)-f\left(x_{1},.,x_{i-1},y_{i},..y_{k}\right),
\]
we have by the mean value theorem for some $c\in\left(0,1\right)$
\[
|f\left(x_{1},..,x_{k}\right)-f\left(y_{1},..,y_{k}\right)|\leq\sum_{i=1}^{k}|\nabla f\left(x_{1},..,x_{i}c+y_{i}\left(1-c\right),..,x_{k}\right)||x_{i}-y_{i}|,
\]
which combined with proposition \ref{prop:Young integrals of variable order}
gives us the estimate 
\[
|Y_{s,t}-f\left(Y_{s-\tau_{1}},..,Y_{s-\tau_{k}}\right)X_{s,t}|=|\int_{s}^{t}f\left(Y_{r-\tau_{1}},..,Y_{r-\tau_{k}}\right)-f\left(Y_{s-\tau_{1}},..,Y_{s-\tau_{k}}\right)dX_{r}|
\]
\[
\leq C_{\beta,k}\parallel\nabla f\parallel_{\infty}\sup_{i\in\left\{ 1,..,k\right\} }\parallel Y_{\cdot-\tau_{i}}\parallel_{\alpha\left(\cdot-\tau_{i}\right);\left[s,t\right]}\parallel X\parallel_{\alpha\left(\cdot\right);\left[s,t\right]}|t-s|^{\beta},
\]
where $\beta:=\inf_{t\in\left[\tau_{k},T\right];i\in\left\{ 1,..,k\right\} }\left\{ \alpha\left(t\right)+\alpha\left(t-\tau_{i}\right)\right\} $,
and 
\[
\parallel\nabla f\parallel_{\infty}=\sup_{x\in\mathbb{R}^{k}}|\nabla f\left(x_{1},..,x_{i},..x_{k}\right)|.
\]
 Moreover, it is a simple exercise to see that for all $\left[s,t\right]\subset\left[0,T\right]$
\[
\sup_{i\in\left\{ 1,..,k\right\} }\parallel Y_{\cdot-\tau_{i}}\parallel_{\alpha\left(\cdot-\tau_{i}\right);\left[s,t\right]}\leq\parallel Y\parallel_{\alpha\left(\cdot\right);\left[s,t\right]},
\]
and it therefore follows that the regularity of $Y$ is given through
the inequality 
\[
\parallel Y\parallel_{\alpha\left(\cdot\right);\left[s,t\right]}\leq\parallel f\parallel_{\infty}\parallel X\parallel_{\alpha\left(\cdot\right);\left[s,t\right]}
\]
\[
+C_{\beta}\parallel Df\parallel_{\infty}\parallel Y_{\cdot}\parallel_{\alpha\left(\cdot\right);\left[s,t\right]}\parallel X\parallel_{\alpha\left(\cdot\right);\left[s,t\right]}|t-s|^{\beta-\max\left(\alpha\left(t\right),\alpha\left(s\right)\right)}.
\]
We may choose $|t-s|\leq h$ small, such that 
\[
C_{\beta}\parallel f\parallel_{C_{b}^{1}}\parallel X\parallel_{\alpha\left(\cdot\right);\left[0,T\right]}|h|^{\gamma}\leq\frac{1}{2},
\]
where $\gamma=\beta-\sup_{t\in\left[0,T\right]}\left(\alpha\left(t\right)\right)>0$,
and we get that for all $|t-s|\leq h$ 
\[
\parallel Y\parallel_{\alpha\left(\cdot\right);\left[s,t\right]}\leq2\parallel f\parallel_{C_{b}^{1}}\parallel X\parallel_{\alpha\left(\cdot\right);\left[0,T\right]}.
\]
Using Lemma \ref{lem: Scaling of H=0000F6lder norms}, we have that
\[
\parallel Y\parallel_{\alpha\left(\cdot\right);\left[0,T\right]}\leq C_{f,\beta}\parallel X\parallel_{\alpha\left(\cdot\right);\left[0,T\right]}\left(h^{-1}\vee1\right),
\]
but we have that $h\propto\parallel X\parallel_{\alpha\left(\cdot\right);\left[0,T\right]}^{-\frac{1}{\gamma}}$,
which imply 
\[
\parallel Y\parallel_{\alpha\left(\cdot\right);\left[0,T\right]}\leq C_{f,\beta}\left(\parallel X\parallel_{\alpha\left(\cdot\right);\left[0,T\right]}\vee\parallel X\parallel_{\alpha\left(\cdot\right);\left[0,T\right]}^{\frac{1}{\gamma}-1}\right),
\]
where the constant $C_{\beta,f}$ throughout has changed but depend
on $\beta$,$\parallel f\parallel_{C_{b}^{1}}$. 
\end{proof}
Now we are ready to present the main theorem of this section giving
existence and uniqueness results to delayed Young differential equations
driven by variable order noise. 
\begin{thm}
\label{thm:Delayed Young RDE}Assume $\alpha:\left[0,T\right]\rightarrow\left(0,1\right)$
satisfy $\mathbf{A1}$ and is such that for some fixed sequence $\left\{ \tau_{i}\right\} _{i=1}^{k}$
with $0<\tau_{1}<\tau_{2}<..<\tau_{k}$, we have that 
\[
\inf_{t\in\left[\tau_{k},T\right];i\in\left\{ 1,..,k\right\} }\left\{ \alpha\left(t\right)+\alpha\left(t-\tau_{i}\right)\right\} >1.
\]
Assume that $X\in\mathcal{C}^{\alpha\left(\cdot\right)}\left(\left[0,T\right];\mathbb{R}^{d}\right)$
and $f\in C_{b}^{2}\left(\mathbb{R}^{k,n};\mathbb{R}^{d\times n}\right)$.
Then there exists a unique solution to the delayed Young Differential
Equation 
\begin{equation}
dY_{t}=f\left(Y_{t-\tau_{1}},Y_{t-\tau_{2}},..,Y_{t-\tau_{k}}\right)dX_{t},\,\,\,Y_{u}=\xi_{u}\,\,for\,\,\,u\in\left[-\tau_{k},0\right]\label{eq:Delayed RDE}
\end{equation}
 in $\mathcal{C}^{\alpha\left(\cdot\right)}\left(\left[0,T\right];\mathbb{R}^{n}\right)$. 
\end{thm}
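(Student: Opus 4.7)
The plan is to exploit the strict positivity of the delays $\tau_i$ to solve \eqref{eq:Delayed RDE} \emph{causally} in slabs of length $\tau_1$, so that no Picard or fixed point argument is needed: the value $Y_t$ depends only on strictly earlier values $Y_{t-\tau_i}$, turning the construction on each slab into a single variable order Young integral whose integrand is already determined by past data.

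First, on $[0,\tau_1]$ every $t-\tau_i \leq 0$, so $Y_{t-\tau_i}=\xi_{t-\tau_i}$ is prescribed and the equation reduces to
\[
Y_t = \xi_0 + \int_0^t f(\xi_{r-\tau_1},\ldots,\xi_{r-\tau_k})\,dX_r.
\]
Since $\xi \in C^1$ and $f \in C_b^2$, the integrand is $C^1$ in $r$ and therefore sits in every $\mathcal{C}^{\alpha(\cdot)}$. Proposition~\ref{prop:Young integrals of variable order} then applies and yields $Y \in \mathcal{C}^{\alpha(\cdot)}([0,\tau_1])$ together with a quantitative norm bound.

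Next I would iterate. Assuming $Y$ has been built in $\mathcal{C}^{\alpha(\cdot)}$ on $[-\tau_k, j\tau_1]$, I extend it to $[j\tau_1, (j+1)\tau_1 \wedge T]$ by
\[
Y_t = Y_{j\tau_1} + \int_{j\tau_1}^t f(Y_{r-\tau_1},\ldots,Y_{r-\tau_k})\,dX_r,
\]
where all of the $Y_{r-\tau_i}$ are already known. By the chain rule for $f \in C_b^2$ composed with the inductive $Y$'s, this composite integrand has local variable regularity $\min_i \alpha(\cdot-\tau_i)$ at each point, and the hypothesis $\inf_{t\in[\tau_k,T],i}\{\alpha(t)+\alpha(t-\tau_i)\}>1$ is exactly the complementary Young condition needed for Proposition~\ref{prop:Young integrals of variable order} to define the integral. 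This produces $Y \in \mathcal{C}^{\alpha(\cdot)}$ on the new slab, with matching value at the junction $t=j\tau_1$; the a priori bound of Lemma~\ref{lem:Apriori Bounds on delayed SDE} controls the new slab's variable order norm uniformly in $j$. After at most $\lceil T/\tau_1\rceil$ iterations the solution is built on all of $[0,T]$, and Lemma~\ref{lem: Scaling of H=0000F6lder norms} glues the slabs into a single $\mathcal{C}^{\alpha(\cdot)}([0,T];\mathbb{R}^n)$ path. Uniqueness is then immediate from the causal construction: two solutions must coincide on $[0,\tau_1]$ since the integrand there is fully determined by $\xi$, and the same argument propagates slab by slab.

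The main obstacle I anticipate is the bookkeeping in the inductive step, namely showing cleanly that $r \mapsto f(Y_{r-\tau_1},\ldots,Y_{r-\tau_k})$ lies in the variable order Hölder space with the \emph{shifted} exponent $\min_i\alpha(\cdot-\tau_i)$, rather than just with exponent $\alpha(\cdot)$, and that this estimate is uniform in the slab index $j$. Once this shifted-regularity bookkeeping is in place, the rest of the argument is a direct application of the sewing lemma through Proposition~\ref{prop:Young integrals of variable order} together with the a priori bounds already established in Lemma~\ref{lem:Apriori Bounds on delayed SDE}.
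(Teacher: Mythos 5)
Your proposal is correct, but it takes a genuinely different route from the paper. The paper does \emph{not} solve the equation causally: it runs a Picard/Banach fixed point argument on small intervals $\left[\tau_{i}+mT_{0},\tau_{i}+\left(m+1\right)T_{0}\right]$ inside each $\left[\tau_{i},\tau_{i+1}\right]$, working in $\mathcal{C}^{\alpha\left(\cdot\right)-\epsilon}$ to extract a contraction factor $T_{0}^{\epsilon}$, and it needs Lemma \ref{lem:Regularity of composition of functions} (hence a H\"older-continuous derivative of $f$, which is where the $C_{b}^{2}$ hypothesis enters) to estimate $\parallel f\left(Y_{\cdot-\tau_{1}},\ldots\right)-f\left(\tilde{Y}_{\cdot-\tau_{1}},\ldots\right)\parallel$ for the contraction. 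The reason a fixed point is unavoidable in the paper's decomposition is that on $\left[\tau_{1},\tau_{2}\right]$ the argument $Y_{r-\tau_{1}}$ may revisit times inside the same interval when $\tau_{2}>2\tau_{1}$, so the integrand is self-referential there. Your method of steps sidesteps this entirely by slabbing in lengths of the \emph{minimal} delay $\tau_{1}>0$: on $\left[j\tau_{1},\left(j+1\right)\tau_{1}\right]$ every argument $Y_{r-\tau_{i}}$ lies in $\left[-\tau_{k},j\tau_{1}\right]$, so existence is a single application of Proposition \ref{prop:Young integrals of variable order} to a known integrand of shifted regularity $\min_{i}\alpha\left(\cdot-\tau_{i}\right)$ (the hypothesis is exactly the needed complementary Young condition), and uniqueness is free from causality plus the uniqueness clause of the sewing lemma --- no contraction, no $\epsilon$-loss, and in fact Lipschitz $f$ (i.e.\ $C_{b}^{1}$) would suffice, which is a mild strengthening of the statement. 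Two pieces of bookkeeping you should make explicit: (i) the shifted exponent $\min_{i}\alpha\left(\cdot-\tau_{i}\right)$, with $\alpha$ extended to equal $1$ on $[-\tau_{k},0)$ to account for the $C^{1}$ initial segment $\xi$, can jump at the points $r=\tau_{i}$, so the slabs should be refined by $\left\{ \tau_{i}\right\} $ or one should pass to a slightly smaller continuous exponent satisfying $\mathbf{A1}$; and (ii) the uniformity in $j$ of the a-priori bound is not actually needed, since there are only finitely many slabs and the gluing is done by Lemma \ref{lem: Scaling of H=0000F6lder norms}. The trade-off is that your argument is strictly tied to $\tau_{1}>0$, whereas the paper's fixed point machinery is the one that generalizes to equations with a non-delayed term.
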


Before proving Theorem \ref{thm:Delayed Young RDE}, we will need
another lemma which will be important for the uniqueness of the solution
to the delayed equations, and shows that the difference between functions
acting on variable Hölder continuous paths is again variable Hölder
continuous, and the variable Hölder norm is bounded by the variable
Hölder norm of the input signals. 
\begin{lem}
\label{lem:Regularity of composition of functions}Let $T\leq1$,
let $f\in C_{b}^{1}\left(\mathbb{R}^{k,n};\mathbb{R}^{d\times n}\right)$
with derivative $Df$ of $\theta$-Hölder continuity in each direction,
i.e there exists a $C_{k,\theta}$ such that for each $i\in\left\{ 1,..,k\right\} $
we have $|\nabla f(x_{1},..,x_{i},..x_{k})-\nabla f\left(y_{1},..,y_{i},..,y_{k}\right)|\leq C_{\theta,k}|x_{i}-y_{i}|^{\theta}$
for $\theta\in(0,1]$. Then there exists a constant $M\in\mathbb{R}_{+}$
such that for two paths $Z,\tilde{Z}\in\mathcal{C}^{\alpha\left(\cdot\right)}\left(\left[0,T\right];\mathbb{R}^{d}\right)$
we have $\parallel Z\parallel_{\alpha\left(\cdot\right)}\vee\parallel\tilde{Z}\parallel_{\alpha\left(\cdot\right)}\leq M$.
 Extended $Z_{u}=\tilde{Z}_{u}=\xi_{u}$ for $u\in[-\tau_{k},0)$,
then we have that 
\[
|f\left(Z_{\cdot-\tau_{1}},..,Z_{\cdot-\tau_{k}}\right)_{s,t}-f\left(\tilde{Z}_{\cdot-\tau_{1}},..,\tilde{Z}_{\cdot-\tau_{k}}\right)_{s,t}|
\]
\[
\leq C_{\theta,M,k}\parallel f\parallel_{C_{b}^{1}}\left(|Z_{0}-\tilde{Z}_{0}|+\parallel Z-\tilde{Z}\parallel_{\alpha\left(\cdot\right)}\right)\sum_{i=1}^{k}|t-s|^{\max\left(\alpha\left(t-\tau_{i}\right),\alpha\left(s-\tau_{i}\right)\right)\times\theta}.
\]
 
\end{lem}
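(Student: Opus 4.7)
The plan is to reduce the increment of the composition to a line integral of $\nabla f$ and then split this via an add-and-subtract argument into a ``gradient-increment'' piece and a ``signal-increment'' piece, each of which can be estimated by the hypotheses. Following the strategy of the proof of Lemma \ref{lem:Apriori Bounds on delayed SDE}, I would first trivially extend $\alpha$ to $[-\tau_{k},T]$ by $\alpha(u)=1$ for $u\in[-\tau_{k},0)$, so that the $C^{1}$-regularity of $\xi$ guarantees $Z,\tilde Z\in\mathcal{C}^{\alpha(\cdot)}([-\tau_{k},T])$ with the same constant $M$ (up to an absolute factor). Write $\mathbf{Z}_{u}:=(Z_{u-\tau_{1}},\dots,Z_{u-\tau_{k}})$ and analogously $\tilde{\mathbf{Z}}_{u}$, and define $G_{u}:=f(\mathbf{Z}_{u})-f(\tilde{\mathbf{Z}}_{u})$. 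By the fundamental theorem of calculus,
\[
G_{u}=\sum_{i=1}^{k}\int_{0}^{1}\partial_{i}f\bigl(\tilde{\mathbf{Z}}_{u}+r(\mathbf{Z}_{u}-\tilde{\mathbf{Z}}_{u})\bigr)\,(Z_{u-\tau_{i}}-\tilde Z_{u-\tau_{i}})\,dr.
\]

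Taking $G_{s,t}=G_{t}-G_{s}$ and adding/subtracting $\partial_{i}f(\tilde{\mathbf{Z}}_{s}+r(\mathbf{Z}_{s}-\tilde{\mathbf{Z}}_{s}))\cdot(Z_{t-\tau_{i}}-\tilde Z_{t-\tau_{i}})$ inside each integrand splits $G_{s,t}=\mathrm{(I)}+\mathrm{(II)}$, where (I) contains the increment of $\partial_{i}f$ along its basepoint multiplied by $(Z_{t-\tau_{i}}-\tilde Z_{t-\tau_{i}})$, and (II) contains $\partial_{i}f$ evaluated at the basepoint at $s$ multiplied by the double difference $(Z-\tilde Z)_{s-\tau_{i},t-\tau_{i}}$. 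For (II), we use $|\partial_{i}f|\leq\|f\|_{C_{b}^{1}}$ and Proposition \ref{prop:Equivalence of Variable H=0000F6lder norm} to get $|(Z-\tilde Z)_{s-\tau_{i},t-\tau_{i}}|\leq C\|Z-\tilde Z\|_{\alpha(\cdot)}|t-s|^{\max(\alpha(t-\tau_{i}),\alpha(s-\tau_{i}))}$, and since $\theta\leq 1$ and $|t-s|\leq T\leq 1$ this is dominated by the desired $|t-s|^{\max(\alpha(t-\tau_{i}),\alpha(s-\tau_{i}))\theta}$ term. For (I), apply the telescoping identity for $\nabla f$ together with the assumed $\theta$-H\"older continuity in each direction to obtain
\[
\bigl|\partial_{i}f(\tilde{\mathbf{Z}}_{t}+r(\cdot))-\partial_{i}f(\tilde{\mathbf{Z}}_{s}+r(\cdot))\bigr|\leq C_{\theta,k}\sum_{j=1}^{k}\bigl|(\tilde Z+r(Z-\tilde Z))_{s-\tau_{j},t-\tau_{j}}\bigr|^{\theta},
\]
and the bound $\|Z\|_{\alpha(\cdot)}\vee\|\tilde Z\|_{\alpha(\cdot)}\leq M$ then yields a factor $C M^{\theta}|t-s|^{\max(\alpha(t-\tau_{j}),\alpha(s-\tau_{j}))\theta}$ for each $j$; the remaining factor $|Z_{t-\tau_{i}}-\tilde Z_{t-\tau_{i}}|$ is bounded by the sup-norm of $Z-\tilde Z$, which, using $T\leq 1$ and the triangle inequality, is at most $|Z_{0}-\tilde Z_{0}|+\|Z-\tilde Z\|_{\alpha(\cdot)}$.

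Combining these two estimates and absorbing the double sum over $i,j$ into a single sum (at the cost of a factor $k$) gives the stated inequality with constant $C_{\theta,M,k}\|f\|_{C_{b}^{1}}$. The main obstacle I expect is purely bookkeeping: keeping the H\"older exponents aligned with the \emph{shifted} regularity functions $\alpha(\cdot-\tau_{i})$ and ensuring that intervals $[s,t]$ for which some $s-\tau_{i}<0\leq t-\tau_{i}$ are handled correctly. The trivial extension of $\alpha$ above, combined with Proposition \ref{prop:Equivalence of Variable H=0000F6lder norm} to freely swap between the $\alpha(s)$- and $\alpha(t)$-type exponents, makes this reduction uniform and justifies the final $\max(\alpha(t-\tau_{i}),\alpha(s-\tau_{i}))\theta$ exponent appearing in each summand.
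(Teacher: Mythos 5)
Your proposal is correct and follows essentially the same route as the paper: a mean-value/line-integral representation of $f(\mathbf{Z}_u)-f(\tilde{\mathbf{Z}}_u)$, the same add-and-subtract decomposition into a gradient-increment term (controlled via the $\theta$-H\"older continuity of $\nabla f$ and the bound $M$) and a signal-increment term (controlled via $\|\nabla f\|_\infty$ and $\|Z-\tilde Z\|_{\alpha(\cdot)}$), finishing with the sup-norm bound $|Z_0-\tilde Z_0|+\|Z-\tilde Z\|_{\alpha(\cdot)}$. The only cosmetic difference is that you use a single full-gradient line integral (yielding a double sum over $i,j$ that you absorb into a factor $k$) where the paper telescopes coordinate-by-coordinate first; your version is if anything slightly more careful about the dependence of the gradient factor on all coordinates.
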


\begin{proof}
Look at the difference 
\[
f\left(Z_{\cdot-\tau_{1}},..,Z_{\cdot-\tau_{k}}\right)_{s,t}-f\left(\tilde{Z}_{\cdot-\tau_{1}},..,\tilde{Z}_{\cdot-\tau_{k}}\right)_{s,t}
\]
\[
=f\left(Z_{t-\tau_{1}},..,Z_{t-\tau_{k}}\right)-f\left(\tilde{Z}_{t-\tau_{1}},..,\tilde{Z}_{t-\tau_{k}}\right)
\]
\[
-\left(f\left(Z_{s-\tau_{1}},..,Z_{s-\tau_{k}}\right)-f\left(\tilde{Z}_{s-\tau_{1}},..,\tilde{Z}_{s-\tau_{k}}\right)\right)
\]
By the same procedure as in the proof of Lemma \ref{lem:Apriori Bounds on delayed SDE},
we can rewrite 
\[
f\left(x_{1},...x_{k}\right)-f\left(y_{1},..,y_{k}\right)=\sum_{i=1}^{d}f\left(x_{1},.,x_{i},y_{i+1},..y_{k}\right)-f\left(x_{1},.,x_{i-1},y_{i},..y_{k}\right)
\]
Since $f$ is continuously differentiable we have that 
\[
f\left(x_{1},...x_{k}\right)-f\left(y_{1},..,y_{k}\right)=\sum_{i=1}^{d}g_{i}(x_{i},y_{i})\cdot\left(x_{i}-y_{i}\right)\,\,\,with\,\,\quad,
\]
\[
g_{i}(x_{i},y_{i})=\int_{0}^{1}\nabla f\left(x_{1},..,tx_{i}+\left(1-t\right)y_{i},y_{i+1},...,y_{k}\right)dy
\]
and we have that $\parallel g\parallel_{\infty}\leq\parallel\nabla f\parallel_{\infty}.$
Moreover, since the derivative $Df$ of $f$ is Hölder continuous
of order $\theta$, we have for all $i\in\left\{ 1,..,k\right\} $
\[
|g_{i}\left(x_{i},y_{i}\right)-g_{i}(\tilde{x}_{i},\tilde{y}_{i})|\leq C_{\theta}\left(|x_{i}-\tilde{x}_{i}|+|y_{i}-\tilde{y}_{i}|\right)^{\theta}.
\]
Therefore, write $\triangle_{t}^{i}=Z_{t-\tau_{i}}-\tilde{Z}_{t-\tau_{i}}$,
and $\triangle_{s,t}^{i}=\triangle_{t}^{i}-\triangle_{s}^{i},$ we
have 
\[
|f\left(Z_{\cdot-\tau_{1}},..,Z_{\cdot-\tau_{k}}\right)_{s,t}-f\left(\tilde{Z}_{\cdot-\tau_{1}},..,\tilde{Z}_{\cdot-\tau_{k}}\right)_{s,t}|
\]
\[
\leq\sum_{i=1}^{k}|g_{i}\left(Z_{t-\tau_{i}},\tilde{Z}_{t-\tau_{i}}\right)\triangle_{t}^{i}-g\left(Z_{s-\tau_{i}},\tilde{Z}_{s-\tau_{i}}\right)\triangle_{s}^{i}|
\]
\[
=\sum_{i=1}^{k}|g_{i}\left(Z_{t-\tau_{i}},\tilde{Z}_{t-\tau_{i}}\right)\triangle_{s,t}^{i}+\left(g_{i}\left(Z_{t-\tau_{i}},\tilde{Z}_{t-\tau_{i}}\right)-g_{i}\left(Z_{s-\tau_{i}},\tilde{Z}_{s-\tau_{i}}\right)\right)\triangle_{s}^{i}|
\]
\[
\leq\sum_{i=1}^{k}\parallel\nabla f\parallel_{\infty}|Z_{s-\tau_{i},t-\tau_{i}}-\tilde{Z}_{s-\tau_{i},t-\tau_{i}}|
\]
\[
+C_{\theta}\left(|Z_{s-\tau_{i},t-\tau_{i}}|+|\tilde{Z}_{s-\tau_{i},t-\tau_{i}}|\right)^{\theta}|Z_{s-\tau_{i}}-\tilde{Z}_{s-\tau_{i}}|,
\]
 we have that 
\[
\left(|Z_{s-\tau_{i},t-\tau_{i}}|+|\tilde{Z}_{s-\tau_{i},t-\tau_{i}}|\right)^{\theta}\leq2^{\theta}|t-s|^{\max\left(\alpha\left(t-\tau_{i}\right),\alpha\left(s-\tau_{i}\right)\right)\times\theta}M^{\theta},
\]
 and we can easily see that for $T\leq1$, and all $i\in\left\{ 1,..,k\right\} $
\[
\sup_{s\in\left[0,T\right]}|Z_{s-\tau_{i}}-\tilde{Z}_{s-\tau_{i}}|\leq|Z_{0}-\tilde{Z}_{0}|+\parallel Z_{\cdot-\tau_{i}}-\tilde{Z}_{\cdot-\tau_{i}}\parallel_{\alpha\left(\cdot-\tau_{i}\right)}.
\]
 Combining the above and using the fact that $\parallel Y_{\cdot-\tau}\parallel_{\alpha\left(\cdot-\tau\right)}\sim\parallel Y_{\cdot}\parallel_{\alpha\left(\cdot\right)}$,
we end up with 
\[
|f\left(Z_{\cdot-\tau_{1}},..,Z_{\cdot-\tau_{k}}\right)_{s,t}-f\left(\tilde{Z}_{\cdot-\tau_{1}},..,\tilde{Z}_{\cdot-\tau_{k}}\right)_{s,t}|
\]
\[
\leq C_{\theta,M,k}\parallel f\parallel_{C_{b}^{1}}\left(|Z_{0}-\tilde{Z}_{0}|+\parallel Z-\tilde{Z}\parallel_{\alpha\left(\cdot\right)}\right)\sum_{i=1}^{k}|t-s|^{\max\left(\alpha\left(t-\tau_{i}\right),\alpha\left(s-\tau_{i}\right)\right)\times\theta}.
\]
\end{proof}
We are now ready to prove Theorem \ref{thm:Delayed Young RDE}. 
\begin{proof}
We follow a classical Picard iteration technique similar to the one
used in \cite{FriHai}, where we will construct the solution in a
space $\mathcal{C}^{\alpha\left(\cdot\right)-\epsilon}\left(\left[0,T\right]\right)$
for some small $\epsilon>0$ such that we still have 
\[
\inf_{t\in\left[\tau_{k},T\right];i\in\left\{ 1,..,k\right\} }\left\{ \alpha\left(t\right)+\alpha\left(t-\tau_{i}\right)-2\epsilon\right\} >1.
\]
The reason for this is that we then get an explicit dependence on
the time time interval $\left[0,T\right]$ , in the sense that for
a path $X\in\mathcal{C^{\alpha\left(\cdot\right)}}\left(\left[0,T\right]\right)$,
we have
\[
\parallel X\parallel_{\alpha\left(\cdot\right)-\epsilon;\left[0,T\right]}\leq T^{\epsilon}\parallel X\parallel_{\alpha\left(\cdot\right);\left[0,T\right]}.
\]
Usually, the fixed point argument is done through constructing a solution
$Y$ on some small time domain $\left[0,T_{0}\right]$ ensuring that
we can apply Banach's fixed point theorem, and then extend this solution
to all of $\left[0,T\right]$ by constructing solutions on $\left[0,T_{0}\right],\left[T_{0},2T_{0}\right],...,\left[nT_{0},T\right]$
starting in $Y_{T_{0}},Y_{2T_{0}},..$ and so on. However, as we are
dealing with delay, the solution to equation (\ref{eq:Delayed RDE})
is trivially given for $t\in\left[0,\tau_{1}\right]$, i.e since $Y_{u}=\xi_{u}$
on $\left[-\tau_{k},0\right],$we get 
\[
Y_{t}=\xi_{0}+\int_{0}^{t}f\left(Y_{r-\tau_{1}},Y_{r-\tau_{2}},..,Y_{r-\tau_{k}}\right)dX_{r}=\xi_{0}+\int_{0}^{t}f\left(\xi_{r-\tau_{1}},..,\xi_{r-\tau_{k}}\right)dX_{r}.
\]
On the next interval when $t\in\left[\tau_{1},\tau_{2}\right]$ we
have 
\[
Y_{t}=Y_{\tau_{1}}^{|\left[0,\tau_{1}\right]}+\int_{\tau_{1}}^{t}f\left(Y_{r-\tau_{1}},Y_{r-\tau_{2}},..,Y_{r-\tau_{k}}\right)dX_{r}
\]
\[
=Y_{\tau_{1}}^{|\left[0,\tau_{1}\right]}+\int_{\tau_{1}}^{t}f\left(Y_{r-\tau_{1}},\xi_{r-\tau_{2}},..,\xi_{r-\tau_{k}}\right)dX_{r}.
\]
 Therefore, we must first consider a fixed point argument on $\left[\tau_{1},\left(\tau_{1}+T_{0}\right)\wedge\tau_{2}\right]$,
where we use the initial value of the solution to be $Y_{\tau_{1}}^{|\left[0,\tau_{1}\right]}$
(i.e the solution restricted to the domain $\left[0,\tau_{1}\right]$),
and then prove existence on the subsequent intervals until we have
existence on all of $\left[\tau_{1},\tau_{2}\right]$. Then we repeat
this procedure for $\left[\tau_{2},\left(\tau_{2}+T_{0}\right)\wedge\tau_{3}\right]$
(for some possibly different $T_{0}$) but here we must show existence
and uniqueness of the equation 
\[
Y_{t}=Y_{\tau_{2}}^{|\left[\tau_{1},\tau_{2}\right]}+\int_{\tau_{2}}^{t}f\left(Y_{r-\tau_{1}},Y_{r-\tau_{2}},\xi_{r-\tau_{3}},..,\xi_{r-\tau_{k}}\right)dX_{r}.
\]
We will do this inductively. We will first prove existence and uniqueness
on all of $\left[\tau_{1},\tau_{2}\right],$ and after that we assume
existence and uniqueness on an interval $\left[\tau_{i-1},\tau_{i}\right]$
and we prove existence on $\left[\tau_{i},\tau_{i+1}\right].$ First
consider the interval $\left[\tau_{1},\left(\tau_{1}+T_{0}\right)\wedge\tau_{2}\right]$
where $T_{0}$ will be chosen such that an appropriate solution map
is a contraction on $\mathcal{C}^{\alpha\left(\cdot\right)-\epsilon}$
for some very small $\epsilon>0$. Again write $f\left(X_{t}\right)-f\left(X_{s}\right)=f\left(X\right)_{s,t}$
for short, and define a mapping 
\[
\mathcal{V}_{T_{0}}^{\tau_{1}}:\mathcal{C}^{\alpha\left(\cdot\right)-\epsilon}\left(\left[\tau_{1},\tau_{1}+T_{0}\wedge\tau_{2}\right]\right)\rightarrow\mathcal{C}^{\alpha\left(\cdot\right)-\epsilon}\left(\left[\tau_{1},\tau_{1}+T_{0}\wedge\tau_{2}\right]\right)
\]
\[
\mathcal{V}_{T_{0}}^{\tau_{1}}\left(Y\right):=\left(Y_{\tau_{1}}^{|\left[0,\tau_{1}\right]}+\int_{\tau_{1}}^{t}f_{\xi}\left(Y_{r-\tau_{1}}\right)dX_{r};t\in\left[\tau_{1},\tau_{1}+T_{0}\wedge\tau_{2}\right]\right)
\]
with 
\[
Y_{\tau_{1}}^{|\left[0,\tau_{1}\right]}=\xi_{0}+\int_{0}^{\tau_{1}}f\left(\xi_{r-\tau_{1}},..,\xi_{r-\tau_{k}}\right)dX_{r},
\]
 for $\xi\in C^{1}\left(\left[-\tau_{k},0\right]\right)$, and to
shorten notation we have used 
\[
f_{\xi}\left(Y_{r-\tau_{1}}\right):=f\left(Y_{r-\tau_{1}},\xi_{r-\tau_{2}},..,\xi_{r-\tau_{k}}\right).
\]
 Observe that for $s,t\in\left[\tau_{1},\tau_{1}+T_{0}\wedge\tau_{2}\right]$
and $Y,\tilde{Y}:\left[\tau_{1},\tau_{1}+T_{0}\wedge\tau_{2}\right]\rightarrow\mathbb{R}^{n},$
\[
|\mathcal{V}_{T_{0}}^{\tau_{1}}\left(Y\right)_{s,t}-\mathcal{V}_{T_{0}}^{\tau_{1}}\left(\tilde{Y}\right)_{s,t}|\leq|\int_{s}^{t}f_{\xi}\left(Y_{r-\tau_{1}}\right)-f_{\xi}\left(\tilde{Y}_{r-\tau_{1}}\right)dX_{r}|
\]
\[
=|\int_{s}^{t}f_{\xi}\left(Y_{r-\tau_{1}}\right)\pm f_{\xi}\left(Y_{s-\tau_{1}}\right)-\left(f_{\xi}\left(\tilde{Y}_{r-\tau_{1}}\right)\pm f_{\xi}\left(\tilde{Y}_{s-\tau_{1}}\right)\right)dX_{r}|
\]
\[
\leq|\int_{s}^{t}f_{\xi}\left(Y\right)_{r-\tau_{1},s-\tau_{1}}-f_{\xi}\left(\tilde{Y}\right)_{r-\tau_{1},s-\tau_{1}}dX_{r}|+|f_{\xi}\left(Y_{s-\tau_{1}}\right)-f_{\xi}\left(\tilde{Y}_{s-\tau_{1}}\right)||X_{s,t}|.
\]
 Combining results from Lemma \ref{prop:Young integrals of variable order}
with 
\[
\beta=\inf_{i\in\left\{ 1,..,k\right\} }\left\{ \alpha\left(t\right)+\alpha\left(t-\tau_{i}\right)-2\epsilon\right\} >1,
\]
 and Lemma \ref{lem:Regularity of composition of functions} with
$\theta=1$, we can see that
\[
|\mathcal{V}_{T_{0}}^{\tau_{1}}\left(Y\right)_{s,t}-\mathcal{V}_{T_{0}}^{\tau_{1}}\left(\tilde{Y}\right)_{s,t}|
\]
\[
\leq\parallel f_{\xi}\left(Y\right)-f_{\xi}\left(\tilde{Y}\right)\parallel_{\alpha\left(\cdot-\tau_{1}\right)-\epsilon;\left[\tau_{1},\tau_{1}+T_{0}\wedge\tau_{2}\right]}\parallel X\parallel_{\alpha\left(\cdot\right)-\epsilon;\left[\tau_{1},\tau_{1}+T_{0}\wedge\tau_{2}\right]}|t-s|^{\beta}
\]
\[
+\parallel f\parallel_{C_{b}^{1}}|Y_{s-\tau}-\tilde{Y}_{s-\tau}||X_{s,t}|
\]
\[
\leq C_{\alpha\left(\cdot\right),M}\parallel f\parallel_{C_{b}^{2}}\left(1+T_{0}^{\beta-1}\right)\left(\parallel Y-\tilde{Y}\parallel_{\alpha\left(\cdot\right)-\epsilon;\left[\tau_{1},\tau_{1}+T_{0}\wedge\tau_{2}\right]}\right)
\]
\[
\times\parallel X\parallel_{\alpha\left(\cdot\right);\left[\tau_{1},\tau_{1}+T_{0}\wedge\tau_{2}\right]}|t-s|^{\max\left(\alpha\left(t\right),\alpha\left(s\right)\right)},
\]
where we ave used the assumption that $Y_{0}=\tilde{Y}_{0}$, and
$\parallel Y_{\cdot-\tau}\parallel_{\alpha\left(\cdot-\tau\right)}\sim\parallel Y_{\cdot}\parallel_{\alpha\left(\cdot\right)}$
. Dividing by $|t-s|^{\max\left(\alpha\left(t\right),\alpha\left(s\right)\right)}$
and taking supremum, we obtain
\[
\parallel\mathcal{V}_{T_{0}}\left(Y\right)-\mathcal{V}_{T_{0}}\left(\tilde{Y}\right)\parallel_{\alpha\left(\cdot\right)-\epsilon;\left[\tau_{1},\tau_{1}+T_{0}\wedge\tau_{2}\right]}
\]
\[
\leq C\left(1+T_{0}^{\beta-1}\right)\parallel X\parallel_{\alpha\left(\cdot\right);\left[\tau_{1},\tau_{1}+T_{0}\wedge\tau_{2}\right]}\parallel Y-\tilde{Y}\parallel_{\alpha\left(\cdot\right)-\epsilon;\left[\tau_{1},\tau_{1}+T_{0}\wedge\tau_{2}\right]}T_{0}^{\epsilon}.
\]
 Choose now $T_{0}>0$ such that 
\[
q:=C\parallel X\parallel_{\alpha\left(\cdot\right);\left[\tau_{1},\tau_{1}+T_{0}\wedge\tau_{2}\right]}\times\left(1+T_{0}^{\beta-1}\right)T_{0}<1,
\]
clearly $\mathcal{V}_{T_{0}}^{\tau_{1}}$ is a contraction on $\mathcal{C}^{\alpha\left(\cdot\right)-\epsilon}\left(\left[\tau_{1},\tau_{1}+T_{0}\wedge\tau_{2}\right]\right)$
for some very small $\epsilon>0$, and we have proved there exists
a $q\in\left(0,1\right)$ such that 
\[
\parallel\mathcal{V}_{T_{0}}^{\tau_{1}}\left(Y\right)-\mathcal{V}_{T_{0}}^{\tau_{1}}\left(\tilde{Y}\right)\parallel_{\alpha\left(\cdot\right)-\epsilon;\left[\tau_{1},\tau_{1}+T_{0}\wedge\tau_{2}\right]}\leq q\parallel Y-\tilde{Y}\parallel_{\alpha\left(\cdot\right)-\epsilon;\left[\tau_{1},\tau_{1}+T_{0}\wedge\tau_{2}\right]},
\]
and the existence of a unique solution on $\left[\tau_{1},\tau_{1}+T_{0}\wedge\tau_{2}\right]$
follows from the Banach fixed point theorem. By standard arguments
for regular differential equations, if $T_{0}<\tau_{2}$ we can iterate
this procedure for intervals $\left[\tau_{1}+kT_{o},\left(\tau_{1}+(k+1)T_{0}\right)\wedge\tau_{2}\right]$
for all $k\leq\sup\left\{ n\in\mathbb{N}|\tau_{1}+nT_{0}<\tau_{2}\right\} $.
As described in the beginning of the proof, assume now that we have
existence and uniqueness on a sub interval $\left[\tau_{i-1},\tau_{i}\right]\subset\left[0,T\right]$,
and we will prove that the equation 
\[
Y_{t}=Y_{\tau_{i}}^{|\left[\tau_{i-1},\tau_{i}\right]}+\int_{\tau_{i}}^{t}f\left(Y_{r-\tau_{1}},..,Y_{r-\tau_{i}},\xi_{r-\tau_{i+1}},...,\xi_{r-\tau_{k}}\right)dX_{r}\,\,\,t\in\left[\tau_{i},\tau_{i+1}\right].
\]
But by the very same procedure as we did for the interval $\left[\tau_{1},\tau_{1}+T_{0}\wedge\tau_{2}\right]$,
we can construct a mapping $\mathcal{V}_{T_{0}}^{\tau_{i}}:\mathcal{C}^{\alpha\left(\cdot\right)-\epsilon}\left(\left[\tau_{i},\tau_{i}+T_{0}\wedge\tau_{i+1}\right]\right)\rightarrow\mathcal{C}^{\alpha\left(\cdot\right)-\epsilon}\left(\left[\tau_{i},\tau_{i}+T_{0}\wedge\tau_{i+1}\right]\right),$
and notice that $T_{0}$ may be different now than what it was on
the interval $\left[\tau_{1},\tau_{1}+T_{0}\wedge\tau_{2}\right]$.
By the same calculations as above, we get that for two paths $Y,\tilde{Y}:\left[\tau_{i},\tau_{i}+T_{0}\wedge\tau_{i+1}\right]\rightarrow\mathbb{R}^{n}$
we have 
\[
\parallel\mathcal{V}_{T_{0}}^{\tau_{i}}\left(Y\right)-\mathcal{V}_{T_{0}}^{\tau_{i}}\left(\tilde{Y}\right)\parallel_{\alpha\left(\cdot\right)-\epsilon;\left[\tau_{i},\tau_{i}+T_{0}\wedge\tau_{i+1}\right]}\leq q\parallel Y-\tilde{Y}\parallel_{\alpha\left(\cdot\right)-\epsilon;\left[\tau_{i},\tau_{i}+T_{0}\wedge\tau_{i+1}\right]}
\]
 for some $q\in\left(0,1\right)$, and we again conclude that $\mathcal{V}_{T_{0}}^{\tau_{i}}$
is a contraction mapping on $\mathcal{C}^{\alpha\left(\cdot\right)-\epsilon}\left(\left[\tau_{i},\tau_{i}+T_{0}\wedge\tau_{i+1}\right]\right)$
and existence and uniqueness follows from Banach's fixed point theorem.
By standard procedure of iteration of the solution, we can extend
the solution to the whole line $\left[\tau_{i},\tau_{i+1}\right]$.
With this induction argument we can conclude that the the solution
exists and is unique on all intervals made up by the delay times $\left\{ \tau_{i}\right\} _{i=1}^{k}$
including the interval $\left[\tau_{k},T\right]$. The extension to
a unique solution on all of $\mathcal{C}^{\alpha\left(\cdot\right)-\epsilon}\left(\left[0,T\right]\right)$
then follows from Lemma \ref{lem: Scaling of H=0000F6lder norms}.
At last, from the inequlaity 
\[
|\int_{s}^{t}f\left(Y_{t-\tau_{1}},Y_{t-\tau_{2}},..,Y_{t-\tau_{k}}\right)dX_{t}|\leq\parallel f\parallel_{C_{b}^{2}}|X_{s,t}|+\mathcal{O}\left(|t-s|^{\beta}\right),
\]
it follows that the solution acctually lives in $\mathcal{C}^{\alpha\left(\cdot\right)}$
as the regularity is inherited by the driver $X\in\mathcal{C}^{\alpha\left(\cdot\right)}.$
\end{proof}

\section{\label{sec:Variable-order-rough path and mBm}Variable order rough
paths -examples from multifractional processes. }

In this section we will discuss the construction of a variable order
rough path and as an example construct the iterated integral for a
multifractional Brownian motion. The discussion evolves around the
problem that the regularity function of a stochastic processes is
desirably evolving both above and below $\frac{1}{2}$, and thus the
construction of an iterated integral (and in any case also the need..)
is different depending on the level of the regularity function. We
will show an algorithm to construct the iterated integral, and apply
this to lift the multifractional Brownian motion to a rough path. 

\subsection{Definition and construction of iterated integrals}

We first present the definition of a variable order rough path, although
it is similar to a regular rough path known from f.ex (\cite{FriHai,FriVic,Gubinelli}). 
\begin{defn}
\label{def:variable order rough pah}$\left(Rough\,\,\,path\,\,\,of\,\,\,variable\,\,\,order\right)$
Let $\alpha:\left[0,T\right]\rightarrow\left(\frac{1}{3},1\right)$
and assume that for a path $X\in\mathcal{C^{\alpha\left(\cdot\right)}}\left(\left[0,T\right];\mathbb{R}^{d}\right)$
there exists an object $\mathbb{X}\in\mathcal{C}_{2}^{2\alpha\left(\cdot\right)\wedge1}\left(\left[0,T\right]^{2};\mathbb{R}^{d\times d}\right)$
such that 
\[
\delta\left(\mathbb{X}\right)_{s,u,t}=X_{s,u}X_{s,t}.
\]
We then say that $\mathbf{X}=\left(X,\mathbb{X}\right)$ is a rough
path of variable order, and denote by $\mathscr{C}^{\alpha\left(\cdot\right)}\left(\left[0,T\right];\mathbb{R}^{d}\right)$
the space of all such rough paths. 
\end{defn}

\begin{rem}
We usually call $\mathbb{X}$ the iterated integral of $X$ as it
behaves algebraically and analytically as such. However, exactly how
$\mathbb{X}$ is constructed is not important in general for the rough
path theory, as long as we know that it exists. Also notice the regularity
requirement on $\mathbb{X}$ is given in the variable order Hölder
norm of order $2\alpha\left(\cdot\right)\wedge1$. This is due to
the fact that although locally $2\alpha$ may be above $1$, the regularity
above $1$ will not increase the regularity of the rough differential
equations, or other applications we consider in this article. For
simplicity we use this assumption, as we will find it to be well suited
in construction of the variable order rough path. 
\end{rem}

When discussing rough paths of variable order, it may not be clear
how to construct an iterated integral suited for the theory of rough
paths. It is often easier to construct iterated integrals on very
small squares, using the local behavior on that small square, than
to construct the iterated integral on the whole space. The next theorem
shows that the construction of an iterated integral $\mathbb{X}$
on $\left[0,T\right]^{2}$ is completely determined from how $\mathbb{X}$
behaves locally around the diagonal of $\left[0,T\right]^{2}.$ That
is, if we consider a partition, $\mathcal{P}=\left\{ \left[\rho_{0},\rho_{1}\right],..,\left[\rho_{N-1},\rho_{N}\right]\right\} $,
and know that on each square $\left[\rho_{i},\rho_{i+1}\right]^{2}$
there exists an object $\mathbb{X}$ which behaves like an iterated
integral (analytically and algebraically) on each square, then these
iterated integrals can be ``glued'' together to construct an iterated
integral on the whole square $\left[0,T\right]^{2}$. 
\begin{thm}
Consider the partition $\mathcal{P}=\left\{ \left[\rho_{0},\rho_{1}\right],..,\left[\rho_{N-1},\rho_{N}\right]\right\} $
of $\left[0,T\right]$, and assume that on each square $\left[\rho_{i},\rho_{i+1}\right]^{2}$
there exists a corresponding object $\mathbb{X}^{|\left[\rho_{i},\rho_{i+1}\right]}$
satisfying locally the Chen relation and the analytic requirement.
Then there exists an object $\mathbb{X}:\left[0,T\right]^{2}\rightarrow\mathbb{R}^{d\times d}$
such that 
\[
\delta\left(\mathbb{X}\right)_{s,u,t}=X_{s,u}X_{s,t},\,\,\,and\,\,\,\parallel\mathbb{X}\parallel_{2\alpha\left(\cdot\right)\wedge1}<\infty,
\]
 and it follows that $\left(X,\mathbb{X}\right)\in\mathscr{C}_{2}^{\alpha\left(\cdot\right)}\left(\left[0,T\right];\mathbb{R}^{d}\right)$. 
\end{thm}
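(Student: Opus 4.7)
The plan is to extend the locally defined iterated integrals $\mathbb{X}^{|[\rho_i,\rho_{i+1}]}$ to a single object $\mathbb{X}$ on all of $[0,T]^2$ by using Chen's relation as a definition on pairs $(s,t)$ that cross one or more partition points. First, for $s,t$ in the same sub-interval $[\rho_i,\rho_{i+1}]$, set $\mathbb{X}_{s,t}:=\mathbb{X}^{|[\rho_i,\rho_{i+1}]}_{s,t}$. For $s\in[\rho_i,\rho_{i+1}]$ and $t\in[\rho_j,\rho_{j+1}]$ with $i<j$, define
\[
\mathbb{X}_{s,t}:=\mathbb{X}^{|[\rho_i,\rho_{i+1}]}_{s,\rho_{i+1}}+\sum_{k=i+1}^{j-1}\mathbb{X}^{|[\rho_k,\rho_{k+1}]}_{\rho_k,\rho_{k+1}}+\mathbb{X}^{|[\rho_j,\rho_{j+1}]}_{\rho_j,t}+\sum X_{\cdot,\cdot}\otimes X_{\cdot,\cdot},
\]
where the cross sum is exactly the collection of terms produced by iterating Chen's relation through the intermediate grid points $\rho_{i+1},\ldots,\rho_j$. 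This is forced on us: any candidate satisfying the global Chen relation must take this form, which is what gives uniqueness of the extension.

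Next I would verify that the global Chen relation $\delta(\mathbb{X})_{s,u,t}=X_{s,u}\otimes X_{u,t}$ holds. The idea is an induction on the number of partition points lying in $(s,t)$. When $s,u,t$ all lie in a single $[\rho_i,\rho_{i+1}]$, the identity is the assumed local Chen relation. When the triple spans several sub-intervals, insert each intermediate $\rho_k$ between $s$ and $t$: by definition $\mathbb{X}_{s,t}$ equals the corresponding telescoping combination, and applying the local Chen relation on every sub-interval together with a bookkeeping of the cross terms $X\otimes X$ collapses the difference $\delta(\mathbb{X})_{s,u,t}$ to the single cross term $X_{s,u}\otimes X_{u,t}$. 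This is a finite algebraic manipulation, not difficult but tedious.

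The main obstacle is the analytic estimate $\|\mathbb{X}\|_{2\alpha(\cdot)\wedge 1}<\infty$. I would treat three regimes. If $s,t$ lie in the same sub-interval, the bound is exactly the local assumption. If $s<t$ lie in adjacent sub-intervals and straddle a single $\rho_{i+1}$, write
\[
|\mathbb{X}_{s,t}|\le |\mathbb{X}^{|[\rho_i,\rho_{i+1}]}_{s,\rho_{i+1}}|+|\mathbb{X}^{|[\rho_{i+1},\rho_{i+2}]}_{\rho_{i+1},t}|+|X_{s,\rho_{i+1}}||X_{\rho_{i+1},t}|,
\]
and bound each piece by $|t-s|^{(2\alpha(s))\wedge 1}$ using the local regularity of $\mathbb{X}^{|[\rho_i,\rho_{i+1}]}$, the regularity of $X\in\mathcal{C}^{\alpha(\cdot)}$, and Proposition \ref{prop:Equivalence of Variable H=0000F6lder norm} to move freely between the base points $s$, $\rho_{i+1}$. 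The delicate case is when $|t-s|$ is small: then $s$ and $t$ must both be near $\rho_{i+1}$, so all three terms are of size at most a multiple of $|t-s|^{2\alpha(s)\wedge 1}$, uniformly in the grid.

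Finally, to obtain the bound for arbitrary $s<t\in[0,T]$, one observes that what has been done above yields a uniform estimate for all $(s,t)$ with $|t-s|\le h$, where $h:=\min_i|\rho_{i+1}-\rho_i|$. A direct application of Lemma \ref{lem: Scaling of H=0000F6lder norms}, applied to the two-parameter function $(s,t)\mapsto\mathbb{X}_{s,t}$ with Chen's relation replacing the subadditivity used for ordinary paths (so that on a chain $s=t_0<t_1<\dots<t_N=t$ with $|t_{i+1}-t_i|\le h$ one expands $\mathbb{X}_{s,t}$ into the local pieces plus cross terms $X_{t_i,t_{i+1}}\otimes X_{t_{i+1},t}$), then upgrades the local bound to the global variable-order bound, giving $\|\mathbb{X}\|_{2\alpha(\cdot)\wedge 1;[0,T]}<\infty$. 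This concludes that $(X,\mathbb{X})\in\mathscr{C}^{\alpha(\cdot)}([0,T];\mathbb{R}^d)$.
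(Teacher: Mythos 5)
Your proposal is correct and follows essentially the same route as the paper: you define $\mathbb{X}_{s,t}$ across partition points by exactly the Chen-iterated sum of local pieces plus cross terms $X_{s,u}\otimes X_{u,v}$ (which is the paper's formula $\sum_{[u,v]\in\mathcal{P}[s,t]}\mathbb{X}^{|[u,v]}_{u,v}+X_{s,u}X_{u,v}$), verify the global Chen relation by the same telescoping bookkeeping, and establish the Hölder bound by separating the within-interval, straddling, and well-separated regimes. The only organizational difference is in the last step, where the paper simply splits the supremum at the minimal mesh $|\mathcal{P}|_*$ and uses that the quotient is trivially bounded once $|t-s|\geq|\mathcal{P}|_*$, whereas you treat the adjacent-interval case explicitly and then chain via the scaling lemma; your handling of the straddling case is, if anything, slightly more careful than the paper's.
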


\begin{proof}
Consider the rectangle $\left[\rho_{i},\rho_{i+1}\right]\times\left[\rho_{j},\rho_{j+1}\right]$
for $0\leq i<j\leq N$. For $s\in\left[\rho_{i},\rho_{i+1}\right]$
and $t\in\left[\rho_{j},\rho_{j+1}\right]$ define $\mathcal{P}\left[s,t\right]=\mathcal{P}\cap\left[s,t\right]$,
i.e. $\mathcal{P}\cap\left[s,t\right]=\left\{ \left[s,\rho_{i+1}\right],\left[\rho_{i+1},\rho_{i+2}\right],..,\left[\rho_{j},t\right]\right\} $,
and define 
\[
\mathbb{X}_{s,t}:=\sum_{\left[u,v\right]\in\mathcal{P}\left[s,t\right]}\mathbb{X}_{u,v}^{|\left[u,v\right]}+X_{s,u}X_{u,v}.
\]
Consider now $s<r<t$ with $s\in\left[\rho_{i},\rho_{i+1}\right]$,
$r\in$$\left[\rho_{k},\rho_{k+1}\right]$ and $t\in\left[\rho_{j},\rho_{j+1}\right]$
for $i\leq k\leq j$, then 
\[
\left(\delta\mathbb{X}\right)_{s,r,t}=\sum_{\left[u,v\right]\in\mathcal{P}\left[s,t\right]}\mathbb{X}_{u,v}^{|\left[u,v\right]}+X_{s,u}X_{u,v}-\left(\sum_{\left[u,v\right]\in\mathcal{P}\left[s,r\right]}\mathbb{X}_{u,v}^{|\left[u,v\right]}+X_{s,u}X_{u,v}\right)
\]
 
\[
-\left(\sum_{\left[u,v\right]\in\mathcal{P}\left[r,t\right]}\mathbb{X}_{u,v}^{|\left[u,v\right]}+X_{r,u}X_{u,v}\right)
\]
 Let us divide the argument into treating first the terms involving
$\mathbb{X}$ and secondly, we look at the product of increments of
$X$, and at last we combine them. First, observe that 
\[
\sum_{\left[u,v\right]\in\mathcal{P}\left[s,t\right]}\mathbb{X}_{u,v}^{|\left[u,v\right]}=\sum_{\left[u,v\right]\in\mathcal{P}\left[s,\rho_{k}\right]}\mathbb{X}_{u,v}^{|\left[u,v\right]}+\mathcal{\mathbb{X}}_{\rho_{k},\rho_{k+1}}+\sum_{\left[u,v\right]\in\mathcal{P}\left[\rho_{k+1},t\right]}\mathbb{X}_{u,v}^{|\left[u,v\right]}.
\]
Therefore, looking at the difference $\mathbb{X}_{s,t}-\mathbb{X}_{s,r}$,
we have 
\[
\sum_{\left[u,v\right]\in\mathcal{P}\left[s,t\right]}\mathbb{X}_{u,v}^{|\left[u,v\right]}-\sum_{\left[u,v\right]\in\mathcal{P}\left[s,r\right]}\mathbb{X}_{u,v}^{|\left[u,v\right]}
\]
\[
=\mathcal{\mathbb{X}}_{\rho_{k},\rho_{k+1}}-\mathbb{X}_{\rho_{k},r}^{|\left[\rho_{k},\rho_{k+1}\right]}+\sum_{\left[u,v\right]\in\mathcal{P}\left[\rho_{k+1},t\right]}\mathbb{X}_{u,v}^{|\left[u,v\right]}
\]
\[
=\mathbb{X}_{r,\rho_{k+1}}^{|\left[\rho_{k},\rho_{k+1}\right]}+X_{\rho_{k},r}X_{r,\rho_{k+1}}+\sum_{\left[u,v\right]\in\mathcal{P}\left[\rho_{k+1},t\right]}\mathbb{X}_{u,v}^{|\left[u,v\right]}.
\]
And in the same way we find that when subtracting $\mathbb{X}_{r,t}$,
we get 
\[
\sum_{\left[u,v\right]\in\mathcal{P}\left[s,t\right]}\mathbb{X}_{u,v}^{|\left[u,v\right]}-\sum_{\left[u,v\right]\in\mathcal{P}\left[s,r\right]}\mathbb{X}_{u,v}^{|\left[u,v\right]}-\sum_{\left[u,v\right]\in\mathcal{P}\left[r,t\right]}\mathbb{X}_{u,v}^{|\left[u,v\right]}
\]
 
\[
=X_{\rho_{k},r}X_{r,\rho_{k+1}}.
\]
Furthermore, for the product of increments, we have that 
\[
\sum_{\left[u,v\right]\in\mathcal{P}\left[s,t\right]}X_{s,u}X_{u,v}-\sum_{\left[u,v\right]\in\mathcal{P}\left[s,r\right]}X_{s,u}X_{u,v}-\sum_{\left[u,v\right]\in\mathcal{P}\left[r,t\right]}X_{r,u}X_{u,v}
\]
 
\[
=X_{s,\rho_{k}}X_{r,\rho_{k+1}}+X_{s,r}\times\sum_{\left[u,v\right]\in\mathcal{P}\left[\rho_{k+1},t\right]}X_{u,v}.
\]
combining our findings, we can see that 

\[
\delta\left(\mathbb{X}\right)_{s,r,t}=X_{\rho_{k},r}X_{r,\rho_{k+1}}+X_{s,\rho_{k}}X_{r,\rho_{k+1}}
\]
\[
+X_{s,r}\times\sum_{\left[u,v\right]\in\mathcal{P}\left[\rho_{k+1},t\right]}X_{u,v}=X_{s,r}\times\sum_{\left[u,v\right]\in\mathcal{P}\left[r,t\right]}X_{u,v}=X_{s,r}X_{r,t},
\]
and we can conclude that this construction of the iterated integral
satisfies the Chen's relation. Next we must prove that this construction
is of sufficient variable Hölder regularity. 

We can see that 
\[
\parallel\mathbb{X}_{s,t}\parallel_{2\alpha\left(\cdot\right)\wedge1;\left[0,T\right]}=\sup_{0\leq s<t\leq T}\frac{|\sum_{\left[u,v\right]\in\mathcal{P}\left[s,t\right]}\mathbb{X}_{u,v}^{|\left[u,v\right]}+X_{s,u}X_{u,v}|}{|t-s|^{2\alpha\left(s\right)\wedge1}}
\]
\[
\leq\sup_{0\leq s<t\leq T}\sum_{\left[u,v\right]\in\mathcal{P}\left[s,t\right]}\frac{|\mathbb{X}_{u,v}^{|\left[u,v\right]}|}{|t-s|^{2\alpha\left(s\right)\wedge1}}+\sup_{0\leq s<t\leq T}\sum_{\left[u,v\right]\in\mathcal{P}\left[s,t\right]}\frac{|X_{s,u}X_{u,v}|}{|t-s|^{2\alpha\left(s\right)\wedge1}}.
\]
Treating each term separately we can see for the first term, 
\[
\sup_{0\leq s<t\leq T}\sum_{\left[u,v\right]\in\mathcal{P}\left[s,t\right]}\frac{|\mathbb{X}_{u,v}^{|\left[u,v\right]}|}{|t-s|^{2\alpha\left(s\right)\wedge1}}
\]
\[
\leq\sup_{0\leq s<t\leq T}\sum_{\left[u,v\right]\in\mathcal{P}\left[s,t\right]}\frac{\parallel\mathbb{X}^{|\left[u,v\right]}\parallel_{2\alpha\left(\cdot\right)\wedge1;\left[u,v\right]}|v-u|^{2\alpha\left(u\right)\wedge1}}{|t-s|^{2\alpha\left(s\right)\wedge1}}
\]
\[
\leq C\sup_{0\leq s<t\leq T}\sum_{\left[u,v\right]\in\mathcal{P}\left[s,t\right]}\frac{|v-u|^{2\alpha\left(u\right)\wedge1}}{|t-s|^{2\alpha\left(s\right)\wedge1}},
\]
Divide the supreme such that 
\[
\sup_{0\leq s<t\leq T}=\sup_{0\leq s<t\leq T;|t-s|\leq|\mathcal{P}|_{*}}+\sup_{0\leq s<t\leq T;|t-s|\geq|\mathcal{P}|_{*}}
\]
where $|\mathcal{P}|_{*}=\inf\left\{ |v-u||\left[u,v\right]\in\mathcal{P}\right\} $
is a fixed number, and observe that 
\[
\sup_{0\leq s<t\leq T;|t-s|\leq|\mathcal{P}|_{*}}\sum_{\left[u,v\right]\in\mathcal{P}\left[s,t\right]}\frac{|v-u|^{2\alpha\left(u\right)\wedge1}}{|t-s|^{2\alpha\left(s\right)\wedge1}}=1.
\]
 Furthermore, it is clear that there exists a constant $C_{\alpha,T}$
such that 
\[
\sup_{0\leq s<t\leq T;|t-s|\geq|\mathcal{P}|_{*}}\sum_{\left[u,v\right]\in\mathcal{P}\left[s,t\right]}\frac{|v-u|^{2\alpha\left(u\right)\wedge1}}{|t-s|^{2\alpha\left(s\right)\wedge1}}\leq C_{\alpha,T}<\infty,
\]
and hence 
\[
\sup_{0\leq s<t\leq T}\sum_{\left[u,v\right]\in\mathcal{P}\left[s,t\right]}\frac{|\mathbb{X}_{u,v}^{|\left[u,v\right]}|}{|t-s|^{2\alpha\left(s\right)\wedge1}}\leq C_{\alpha,T}.
\]
For the second part, we have 
\[
\sum_{\left[u,v\right]\in\mathcal{P}\left[s,t\right]}\frac{|X_{s,u}X_{u,v}|}{|t-s|^{2\alpha\left(s\right)\wedge1}}\leq\parallel X\parallel_{\alpha\left(\cdot\right);\left[0,T\right]}^{2}\sum_{\left[u,v\right]\in\mathcal{P}\left[s,t\right]}\frac{|u-s|^{2\alpha\left(s\right)}|v-u|^{2\alpha\left(u\right)}}{|t-s|^{2\alpha\left(s\right)\wedge1}},
\]
Notice that if $|t-s|\leq|\mathcal{P}|_{*},$then the above sum is
zero . On the other hand, there exists a constant $C_{\alpha,T}$
such that 
\[
\sup_{0\leq s<t\leq T;|t-s|\geq|\mathcal{P}|_{*}}\sum_{\left[u,v\right]\in\mathcal{P}\left[s,t\right]}\frac{|u-s|^{2\alpha\left(s\right)}|v-u|^{2\alpha\left(u\right)}}{|t-s|^{2\alpha\left(s\right)\wedge1}}\leq C_{\alpha,T}.
\]
 Combining these results, we can see that 
\[
\parallel\mathbb{X}_{s,t}\parallel_{2\alpha\left(\cdot\right)\wedge1;\left[0,T\right]}\leq C_{\alpha,T},
\]
which concludes our proof. 
\end{proof}

\subsection{Rough path lift of multifractional Brownian motion }
\begin{defn}
Consider a two-sided $d-$dimensional Brownian motion $\left\{ B_{t}\right\} _{t\in(-\infty,T]}$
and let $h:\left[0,T\right]\rightarrow\left(0,1\right)$ be a regularity
function satisfying $\sup_{s,r\in\left[0,T\right]}\left(r-s\right)^{h(r)-h(s)}\leq C$.
We define the multifractional Brownian motion $(mBm)$ by 
\[
B_{t}^{h}=\int_{-\infty}^{0}\left(t-r\right)^{h\left(t\right)-\frac{1}{2}}-\left(-r\right)^{h(t)-\frac{1}{2}}dB_{r}+\int_{0}^{t}\left(t-r\right)^{h(t)-\frac{1}{2}}dB_{r}.
\]
 
\end{defn}

The above definition is given as a generalization of the fractional
Brownian motion in the Mandelbrot-Van-Ness representation. This representation
is equal in distribution (up to a deterministic function) to the Harmonizable
representation of the mBm given by 
\[
\tilde{B}_{t}^{h}=\int_{\mathbb{R}}\frac{e^{it\xi}-1}{i\xi|\xi|^{h(t)-\frac{1}{2}}}dW\left(\xi\right),
\]
where $W\left(\xi\right)$ is a complex valued Brownian motion. We
will in the rest of this section assume the representation of the
mBm is given as in the Mandelbrot-Van-Ness case. The next proposition
gives results relating to the co variance and local regularities of
the multifractional Brownian motion. 
\begin{prop}
\label{prop:(Covariance-of-RLmBm)}(Co-variance of mBm) Let $\left\{ \tilde{B}_{t}^{h}\right\} $
be a mBm be defined as above. Then for $0\leq s\leq t\leq T$, we
have 
\[
\tilde{R}^{h}\left(t,s\right)=E\left[B_{t}^{h}\cdot B_{s}^{h}\right]=c(h(t),h(s))\left(t^{2h(t)}+s^{2h(s)}-|t-s|^{h(t)+h(s)}\right)
\]
 where 
\[
c(x,y)=\frac{\sqrt{\Gamma\left(2x+1\right)\Gamma\left(2y+1\right)\sin\left(\pi x\right)\sin\left(\pi y\right)}}{2\Gamma\left(x+y+1\right)\sin\left(\pi\left(x+y\right)/2\right)}.
\]
 
\end{prop}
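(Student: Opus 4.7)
The plan is to compute the covariance directly by interpreting $B_t^h$ as a Wiener integral and invoking the It\^{o} isometry. Define the deterministic kernel
\[
K_u(r) \;=\; \bigl[(u-r)^{h(u)-\tfrac{1}{2}} - (-r)^{h(u)-\tfrac{1}{2}}\bigr]\mathbf{1}_{\{r<0\}} + (u-r)^{h(u)-\tfrac{1}{2}}\mathbf{1}_{\{0\le r\le u\}},
\]
so that $B_u^h = \int_{\mathbb{R}} K_u(r)\,dB_r$. Note that although $h$ appears inside the kernel, the parameter is frozen at the value $h(u)$ for the single evaluation point $u$, so $K_u \in L^2(\mathbb{R})$ by the same reasoning as for standard fBm. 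The It\^{o} isometry then yields
\[
E[B_t^h \, B_s^h] \;=\; \int_{\mathbb{R}} K_t(r)\,K_s(r)\,dr.
\]

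The second step is to split this integral. Assume without loss of generality $0\le s\le t$; then $K_s$ vanishes on $(s,t]$, so
\[
\int_{\mathbb{R}} K_t K_s \,dr \;=\; \underbrace{\int_{-\infty}^{0} K_t K_s \,dr}_{=:\,I_1} \;+\; \underbrace{\int_{0}^{s} (t-r)^{h(t)-\frac12}(s-r)^{h(s)-\frac12}\,dr}_{=:\,I_2}.
\]
For $I_2$, the substitution $r=s-sv$ casts the integrand into the form $s^{h(s)+\frac12}\int_{0}^{1}((t-s)+sv)^{h(t)-\frac12} v^{h(s)-\frac12}\,dv$, which is an incomplete Beta integral. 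For $I_1$, the substitution $u=-r$ turns it into
\[
I_1 \;=\; \int_{0}^{\infty}\!\bigl[(t+u)^{h(t)-\frac12}-u^{h(t)-\frac12}\bigr]\bigl[(s+u)^{h(s)-\frac12}-u^{h(s)-\frac12}\bigr]\,du.
\]
Each of the four cross-terms is individually divergent at infinity, but their combination is integrable (the difference $(u+a)^{\alpha}-u^{\alpha}$ is $O(u^{\alpha-1})$ as $u\to\infty$), and each convergent combination reduces, after an appropriate scaling $u = t w$ or $u = s w$, to a standard Beta integral $\int_{0}^{\infty}w^{a-1}(1+w)^{-a-b}\,dw = B(a,b)$.

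The third step is to gather all the Beta function values and express them in terms of $\Gamma$. The factor $t^{2h(t)}$ comes out of the term containing $(t+u)^{2h(t)-1}$-type scalings, the factor $s^{2h(s)}$ from the analogous $s$-scaled term, and the cross factor $|t-s|^{h(t)+h(s)}$ from the genuine cross-integral, both in $I_1$ and in $I_2$. Combining the Gamma factors and applying the reflection identity $\Gamma(z)\Gamma(1-z) = \pi / \sin(\pi z)$ rewrites the resulting prefactor as
\[
\frac{\sqrt{\Gamma(2h(t)+1)\Gamma(2h(s)+1)\sin(\pi h(t))\sin(\pi h(s))}}{2\,\Gamma(h(t)+h(s)+1)\sin\!\bigl(\pi(h(t)+h(s))/2\bigr)} \;=\; c(h(t),h(s)),
\]
and a sanity check at $h(t)=h(s)=H$ gives $c(H,H)=\tfrac12$, recovering the classical fBm covariance $\tfrac12(t^{2H}+s^{2H}-|t-s|^{2H})$.

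The main obstacle is the bookkeeping in Step~3: extracting precisely the three-term structure $t^{2h(t)}+s^{2h(s)}-|t-s|^{h(t)+h(s)}$ requires a careful cancellation between the contributions of $I_1$ (which supplies the cross $|t-s|^{h(t)+h(s)}$ piece with the correct sign via the signed expansion of the two-term differences) and $I_2$ (which supplies parts of the $t^{2h(t)}$ and $s^{2h(s)}$ pieces). The resulting algebra for the $\Gamma$-constant is the classical one that already appears in Peltier--L\'evy V\'ehel \cite{Peltier}, and one may either reproduce it or cite it; the novelty compared with the fBm case is only that the two Hurst exponents are now distinct numbers $h(t)$ and $h(s)$ rather than a common value $H$.
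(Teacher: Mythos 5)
The paper contains no proof of this proposition at all: it is stated as a known fact, with the surrounding regularity results attributed to Peltier and L\'evy V\'ehel, so there is no argument of the paper's to match yours against step by step. What matters is whether your computation can be closed, and it cannot in the form you describe.

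The obstruction sits exactly where you place the ``main obstacle,'' in Step~3, but it is structural rather than a matter of bookkeeping. For fixed $t,s$ your kernels depend on $h$ only through the two frozen numbers $a=h(t)$ and $b=h(s)$, so write $F(t,s;a,b)=\int_{\mathbb{R}}K_tK_s\,dr$. The substitution $r\mapsto\lambda r$ gives the exact homogeneity $F(\lambda t,\lambda s;a,b)=\lambda^{a+b}F(t,s;a,b)$, whereas the claimed right-hand side scales as a mixture of $\lambda^{2a}$, $\lambda^{2b}$ and $\lambda^{a+b}$; these agree only when $a=b$. Hence no Beta-function algebra can turn $\int K_tK_s\,dr$ into $c(a,b)\left(t^{2a}+s^{2b}-|t-s|^{a+b}\right)$ when $h(t)\neq h(s)$. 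Concretely, the four cross-terms of your $I_1$ are $(t+u)^{a-\frac12}(s+u)^{b-\frac12}$, $-(t+u)^{a-\frac12}u^{b-\frac12}$, $-u^{a-\frac12}(s+u)^{b-\frac12}$ and $u^{a+b-1}$: none contains a $(t+u)^{2a-1}$-type factor, every convergent combination is homogeneous of degree $a+b$, the first term is an Appell-type integral rather than a Beta integral for $s\neq t$, and your $I_2$ is a genuine incomplete Beta function of $s/t$ that does not reduce to three pure powers. Two further symptoms of the same mismatch: the clean three-term covariance with the constant $c$ is the Ayache--Cohen--L\'evy V\'ehel formula for the \emph{harmonizable} field $\tilde{B}^h$, where the computation runs through $\int_{\mathbb{R}}(1-\cos(u\xi))|\xi|^{-(a+b+1)}d\xi$ and all three terms carry the common exponent $h(t)+h(s)$; and your sanity check $c(H,H)=\tfrac12$ presupposes the normalization $E[(B_t^h)^2]=t^{2h(t)}$, while the Mandelbrot--Van Ness kernel you integrate carries no normalizing constant, so even on the diagonal the isometry produces an extra $H$-dependent factor. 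Since for non-constant $h$ the moving-average and harmonizable mBm's are genuinely different Gaussian processes with different covariance structures, the honest routes are either to perform the Fourier-side computation for the harmonizable representation (and state the result with the exponent $h(t)+h(s)$ throughout) or simply to cite it; the direct Wiener-integral calculation you propose does not reach the stated identity.
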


When discussing fractional and multifractional Brownian motions, their
Hölder regularities is of great interest. 
\begin{prop}
Let $\left\{ B_{t}^{h}\right\} _{t\in[0,T]}$ be a mBm, with regularity
function $h\in C^{1}\left(\left[0,T\right]\right)$ . Then we have
global regularity 
\[
|B_{t}^{h}-B_{s}^{h}|\leq C|t-s|^{\min(\frac{1}{2},h_{*})}\,\,\,for\,\,\,all\,\,(t,s)\in\triangle^{(2)}\left(\left[0,T\right]\right)
\]
 and local regularity 
\[
\lim_{u\rightarrow0}\frac{|B_{t+u}^{h}-B_{t}^{h}|}{|u|^{h(t)}}<\infty\,\,\,P-a.s.\,\,\forall t\in[0,T],
\]
and it follows that $B^{h}\in\mathcal{C}^{h\left(\cdot\right)}\left(\left[0,T\right];\mathbb{R}^{d}\right)$
$P-$a.s. 
\end{prop}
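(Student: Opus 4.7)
The plan is to exploit the Gaussian structure of $B^h$ together with the explicit covariance formula from Proposition~\ref{prop:(Covariance-of-RLmBm)} to obtain a Kolmogorov-type moment bound for the increments, apply the Kolmogorov--Chentsov continuity theorem to deduce the global Hölder statement, and then refine the local behaviour by comparing $B^h$ near a fixed time to a tangent fBm of constant Hurst parameter.

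\textbf{Step 1: second moment of the increments.} Writing $E[(B_t^h - B_s^h)^2] = R^h(t,t) - 2R^h(t,s) + R^h(s,s)$ and using that $c(x,x) = \tfrac{1}{2}$ (so that $E[(B_t^h)^2] = t^{2h(t)}$), a direct algebraic rearrangement produces the identity
\[
E[(B_t^h - B_s^h)^2] = 2c(h(t),h(s))\,|t-s|^{h(t)+h(s)} + \bigl(1 - 2c(h(t),h(s))\bigr)\bigl(t^{2h(t)} + s^{2h(s)}\bigr).
\]
The first summand is bounded by $C|t-s|^{2h_*}$ for $|t-s| \leq 1$. Since $c \in C^1((0,1)^2)$ with $c(x,x) = \tfrac{1}{2}$, one has $|1 - 2c(h(t),h(s))| \lesssim |h(t) - h(s)| \lesssim |t-s|$ from $h \in C^1$, and the second summand is thus $O(|t-s|)$; for $|t-s| \leq 1$ this is controlled by $|t-s|^{2\min(1/2, h_*)}$.

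\textbf{Step 2: global Hölder regularity via Kolmogorov.} Since $B_t^h - B_s^h$ is centred Gaussian, all $L^p$ norms are comparable to the $L^2$ norm, so $E|B_t^h - B_s^h|^p \leq C_p |t-s|^{p\min(1/2, h_*)}$ for every $p \geq 1$. Kolmogorov--Chentsov then yields a continuous modification whose paths are $\alpha$-Hölder of every order $\alpha < \min(1/2, h_*)$, which is the content of the global estimate in the standard sense (with a random constant in the Hölder seminorm).

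\textbf{Step 3: pointwise local regularity.} Fix $t \in [0,T]$ and introduce the ``tangent fBm'' $W^{(t)}$ of constant Hurst exponent $h(t)$, defined by the Mandelbrot--Van Ness integral using $h(t)$ in place of $h(\cdot)$ in the kernel but driven by the same Brownian motion $B$. Decompose
\[
B_{t+u}^h - B_t^h = \bigl(W_{t+u}^{(t)} - W_t^{(t)}\bigr) + R_u(t).
\]
The first term scales as $|u|^{h(t)}$ almost surely by the standard fBm behaviour. For the remainder, differentiating the kernel $(t-r)^{h-1/2}$ in $h$ and using $h(t+u) - h(t) = O(u)$ produces an $L^2$-bound $E|R_u(t)|^2 = o(|u|^{2h(t)})$ as $u \to 0$. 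A Borel--Cantelli argument on the dyadic sequence $u_n = 2^{-n}$, interpolated to arbitrary $u$ by monotonicity of the increments along refinements, then gives $\limsup_{u \to 0}|B_{t+u}^h - B_t^h|/|u|^{h(t)} < \infty$ almost surely. Combined with Step 2 and the embedding from the appendix relating generalized Gagliardo norms to $\mathcal{C}^{h(\cdot)}$, one concludes that $B^h \in \mathcal{C}^{h(\cdot)}([0,T]; \mathbb{R}^d)$ almost surely.

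The main obstacle is Step 3: the derivative $\partial_h (t-r)^{h-1/2} = (t-r)^{h-1/2}\log(t-r)$ produces a logarithmic singularity at $r = t$, so the $L^2$-estimate of $R_u(t)$ requires splitting the integration into a region $(t-\delta, t)$ near the singularity and its complement, and bounding each piece separately to absorb the logarithm into the $o(|u|^{2h(t)})$ remainder. A secondary subtlety is that Kolmogorov yields $\alpha < \min(1/2, h_*)$ rather than the endpoint, so the formal global statement must be understood up to this arbitrarily small loss, consistent with the definition of $\mathcal{C}^{h(\cdot)}$.
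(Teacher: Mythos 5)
The paper does not actually prove this proposition: its entire ``proof'' is a one-line citation to Peltier and L\'evy V\'ehel, so your proposal is doing strictly more work than the text itself. Your architecture --- second-moment bound from the covariance formula, Gaussian hypercontractivity plus Kolmogorov--Chentsov for the global statement, and a tangent-fBm decomposition for the pointwise statement --- is exactly the route taken in that cited reference, so as a reconstruction of the intended argument it is on target. Steps 1 and 2 are sound, with two caveats you partly acknowledge: the covariance formula in the paper is stated for the harmonizable representation $\tilde B^h$, which agrees with the Mandelbrot--Van Ness one only up to a deterministic (but $h$-dependent, hence $t$-dependent) normalization, so one should either work with the MvN kernel directly or check that this normalization is $C^1$ in $t$ and can be absorbed; and Kolmogorov only yields exponents strictly below $\min(\tfrac12,h_*)$, which you correctly flag.

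The genuine soft spot is Step 3. The assertion that the tangent fBm increment ``scales as $|u|^{h(t)}$ almost surely'' is false at the exact exponent: by the local law of the iterated logarithm for fBm of Hurst index $H=h(t)$, one has $\limsup_{u\to 0}|W^{(t)}_{t+u}-W^{(t)}_t|/|u|^{h(t)}=+\infty$ almost surely, with the correct gauge being $|u|^{h(t)}\sqrt{\log\log(1/u)}$. So your decomposition cannot deliver the statement as literally written; what it delivers (and what Peltier--L\'evy V\'ehel actually prove) is that the pointwise H\"older exponent at $t$ equals $h(t)$, i.e.\ $|B^h_{t+u}-B^h_t|=o(|u|^{h(t)-\epsilon})$ for every $\epsilon>0$. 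This is a defect of the proposition's informal phrasing as much as of your proof, but you should state which version you are proving. Secondly, ``interpolated to arbitrary $u$ by monotonicity of the increments along refinements'' is not an argument --- increments of a Gaussian process are not monotone under refinement --- and passing from the dyadic sequence $u_n=2^{-n}$ to all $u$ requires a chaining or localized Kolmogorov argument for the remainder $R_u(t)$, which your $L^2$-estimate $E|R_u(t)|^2\lesssim |u|^2\log^2(1/u)=o(|u|^{2h(t)})$ is strong enough to support, but which must actually be carried out. With these two repairs (an $\epsilon$-loss or iterated-logarithm correction in the local statement, and a proper chaining step for the remainder), the proposal is a correct and essentially self-contained proof of what the proposition is really asserting.
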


\begin{proof}
Both the claims are thoroughly proved in the seminal paper \cite{Peltier}
by Peltier and Vehèl. 
\end{proof}
For a longer discussion on the properties of the multifractional Brownian
motion, we refer to \cite{AyCoVe,BenJafRed,BouDozMar,Corlay,LebovitsLevy,LebVehHer,SCLim,SergCo}.\\
\\

We will now propose a general algorithm to construct iterated integrals
with respect to a variable order path. We will use a method to divide
the construction of the iterated integral into the case of $\alpha>\frac{1}{2}$,
and $\alpha\leq\frac{1}{2}$, and use familiar methods of construction
in each case. As we are assuming $\alpha$ to be $C^{1}$ , we have
that $\alpha$ fluctuates above and below $\frac{1}{2}$ only a finite
number of times on a finite time interval. Therefore, we want to construct
iterated integrals of the multifractional Brownian motion separately
on these intervals, and then glue them together afterwards. Considering
the fact that the iterated integral of a process $X:\left[0,T\right]\rightarrow\mathbb{R}^{d}$
has $\left[0,T\right]^{2}$ as domain, we must be a bit careful when
we construct iterated integrals. In general, we will use a decomposition
of the iterated integral in the sense that 
\[
\mathbb{X}_{s,t}=\delta\left(\int_{0}^{\cdot}X_{r}dX_{r}\right)_{s,t}-X_{s}X_{s,t}
\]
\[
=\sum_{\left[u,v\right]\in\mathcal{P}}\delta\left(\int_{0}^{\cdot}X_{r}dX_{r}\right)_{u,v}-X_{u}X_{u,v}+X_{s,u}X_{u,v}
\]
where $\delta g_{s,t}:=g_{t}-g_{s}$, and we write $\mathbb{X}^{|\left[u,v\right]}$
for the above iterated integral such that the process $X$ is restricted
to $\left[u,v\right]\subset\left[0,T\right]$. Without loss of generality,
assume $\alpha$ crosses $\frac{1}{2}$ $N$ times and let $\left\{ \rho_{i}\right\} _{i=1}^{N}$
denote a partition of $\left[0,T\right]$ with $\rho_{0}=0$ and $\rho_{N}=T$
and the sequence of subsets $\left[\rho_{0},\rho_{1}\right],...,(\rho_{N-1},\rho_{N}]$
is such that on every even numbered subset in the sequence, the process
$X^{|(\rho_{2k},\rho_{2k+1}]}$ has $\alpha_{t}>\frac{1}{2}$ for
$t\in(\rho_{2k},\rho_{2k+1}]$, and on every odd numbered subset $(\rho_{2k-1},\rho_{2k}]$,
the process $X^{|(\rho_{2k-1},\rho_{2k}]}$ have $\alpha_{t}\leq\frac{1}{2}$
for $t\in(\rho_{2k-1},\rho_{2k}]$ for $k=1,..,\frac{N}{2}$. We then
need to construct the iterated integral on the intervals where $h\leq\frac{1}{2}.$
In this article, we will use the method proposed for fractional Brownian
motion in \cite{NuaTInd}. 
\begin{lem}
\label{lem:MbM rough path}Let $\left[a,b\right]\subset\left[0,T\right]$
and let $\left\{ B_{t}\right\} _{t\in\left[a,b\right]}$ be the Brownian
motion driving $\left\{ B_{t}^{h}\right\} _{t\in\left[a,b\right]}$
the mBm on a probability space $\left(\Omega,P,\mathcal{F}\right)$
with regularity function $h\in C^{1}\left(\left[a,b\right];[h_{*},h^{*}]\right)$
. Define 
\[
\mathbb{B}_{s,t}:=\delta\left(\int_{0}^{\cdot}\left(\cdot-r\right)^{h\left(t\right)-\frac{1}{2}}B_{r}^{h}(i)dB_{r}(j)\right)_{s,t}-B_{s}^{h}(i)B_{s,t}^{h}(j).
\]
Then $\mathbb{B}_{s,t}\in L^{2}\left(\Omega;\mathbb{R}^{d\times d}\right)$
for all $s,t\in\left[0,T\right]^{2}$, and 
\[
E\left[|\mathbb{B}_{s,t}|^{2}\right]\lesssim|t-s|^{4\max\left(h(t),h(s)\right)}
\]
 
\end{lem}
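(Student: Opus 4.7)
The plan is to unfold $B^h$ via its Mandelbrot-Van Ness representation and express $\mathbb{B}_{s,t}$ as a sum of second-chaos Wiener-It\^o integrals with respect to the underlying two-sided Brownian motion $B$. Having done so, the $L^2$ norm can be computed exactly by the It\^o isometry, and bounding it reduces to pointwise estimates on certain kernel differences involving the variable exponent $h(\cdot)$. The construction follows the template used in \cite{NuaTInd} for constant-$H$ fBm, with the extra work coming from the variation of $h$.

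\textbf{Step 1: rewrite as a second-chaos integral.} Using $B^h_r(i) = \int_{\mathbb{R}} K_{h(r)}(r,u)\, dB_u(i)$ for the MvN kernel $K_{h(r)}$, the inner integral $g_t := \int_0^t (t-r)^{h(t)-\frac12} B^h_r(i)\, dB_r(j)$ becomes an iterated It\^o integral; by the standard passage to multiple Wiener integrals one writes it (up to the diagonal correction when $i=j$) as
\[
g_t = \int\!\!\int_{\{u< r< t\}} (t-r)^{h(t)-\frac12}\,K_{h(r)}(r,u)\, dB_u(i)\, dB_r(j).
\]
The tangent correction $B^h_s(i) B^h_{s,t}(j)$ admits an analogous representation, where the inner kernel is frozen at the exponent $h(s)$. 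Subtracting $g_s$ and the tangent term, $\mathbb{B}_{s,t}$ can be written as a single second-chaos element whose kernel is a combination of three pieces: (i) a ``diagonal'' part supported on $r\in[s,t]$, (ii) a ``past'' part supported on $r\in[0,s]$ where the difference of outer kernels $(t-r)^{h(t)-\frac12}-(s-r)^{h(s)-\frac12}$ appears, and (iii) a correction coming from the cancellation with $B^h_s(i) B^h_{s,t}(j)$.

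\textbf{Step 2: bound each piece by the isometry.} Applying $E[|I_2(f)|^2] \lesssim \|f\|_{L^2}^2$ reduces the claim to showing that each of the three kernels has $L^2$-norm squared bounded by $|t-s|^{4\max(h(t),h(s))}$. The diagonal piece is estimated by standard Beta-function bounds after factoring out $(t-s)^{2h(s)}$ from the outer increment and $(t-s)^{2h(s)}$ from the fBm-like kernel $K_{h(r)}$; the tangent subtraction removes the non-integrable $B^h_s$ contribution so that only second-order differences remain. For the past part one decomposes
\[
(t-r)^{h(t)-\frac12} - (s-r)^{h(s)-\frac12}
=\bigl[(t-r)^{h(t)-\frac12}-(t-r)^{h(s)-\frac12}\bigr]+\bigl[(t-r)^{h(s)-\frac12}-(s-r)^{h(s)-\frac12}\bigr],
\]
where the first bracket is controlled by $|h(t)-h(s)|\lesssim|t-s|$ (since $h\in C^1$) times a logarithmic factor that is absorbed by assumption \textbf{A1}, and the second bracket is the classical fractional kernel increment with exponent $h(s)$, treatable by the bounds of \cite{NuaTInd}.

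\textbf{Step 3: assembly and variable-order norm.} Summing the three contributions yields $E[|\mathbb{B}_{s,t}|^2]\lesssim |t-s|^{4h(s)}$ from the analysis anchored at $s$. Symmetrising in $(s,t)$ and invoking Proposition \ref{prop:Equivalence of Variable H=0000F6lder norm} (which provides equivalence of the norms with exponent at $s$, at $t$, or with $\max(h(s),h(t))$) delivers the stated estimate $E[|\mathbb{B}_{s,t}|^2]\lesssim |t-s|^{4\max(h(t),h(s))}$, and the finiteness in $L^2$ follows immediately since $h_*\in(0,1)$ keeps every Beta integral finite.

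\textbf{Main obstacle.} The crux is the past-part estimate: one must simultaneously handle the variation of the outer exponent $h(t)$ versus $h(s)$ and the variation of the inner kernel exponent $h(r)$ as $r$ ranges over $[0,s]$. The decomposition above isolates these two effects, and the key quantitative input is the combination of the $C^1$ regularity of $h$ with assumption \textbf{A1}, which ensures that the extra factor $|t-r|^{h(t)-h(s)}$ remains uniformly bounded. Once this is in hand, the remaining estimates are parallel to the constant-exponent computations of Nualart-Tindel.
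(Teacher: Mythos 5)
Your proposal follows essentially the same route as the paper: both expand $\mathbb{B}_{s,t}$ as a second-chaos object, apply the It\^o isometry, split the kernel into a diagonal part on $[s,t]$ and a past part on $[0,s]$ carrying the difference $(t-r)^{h(t)-\frac12}-(s-r)^{h(s)-\frac12}$, and extract $|t-s|^{4\max(h(s),h(t))}$ by change of variables, Beta-function bounds, and assumption $\mathbf{A1}$ (the paper bounds the past-part integral directly after the substitution $s-u=(r-s)y$ rather than via your add-and-subtract of $(t-r)^{h(s)-\frac12}$, but these are interchangeable technical devices). The one piece you defer with the parenthetical ``up to the diagonal correction when $i=j$'' and never return to is actually carried out in the paper: for $i=j$ the It\^o--Stratonovich conversion produces a deterministic trace term $V_{s,t}=\delta\bigl(\int_{0}^{\cdot}K(\cdot,r)\int_{0}^{r}K(r,u)\,du\,dr\bigr)_{s,t}$ which must itself be shown to be $\lesssim|t-s|^{4\max(h(t),h(s))}$; this follows from the same kernel estimates as the stochastic terms (the paper's $I_{4}$ computation), so it is a genuine but easily repaired omission rather than a flaw in the approach.
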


\begin{proof}
First, write $K\left(t,r\right)=\left(t-r\right)^{h(t)-\frac{1}{2}}1_{\left[r\leq t\right]}$
and $\delta K_{s,t}(r)=K(t,r)-K(s,r)$ ,and see that for component
$i,j\in\left\{ 1,...,d\right\} $
\[
\mathbb{B}_{s,t}^{i,j}=\delta\left(\int_{0}^{\cdot}K(\cdot,r)B_{r}^{h}(i)dB_{r}(j)\right)_{s,t}-B_{s}^{h}(i)B_{s,t}^{h}(j)
\]
In the case when $i=j$ the Stratonovich and Itô integral coincide,
and we therefore treat this case first. By the Itô isometry we have 

\[
E\left[\left(\mathbb{B}_{s,t}^{i,j}\right)^{2}\right]
\]

\[
=E\left[\left(\int_{s}^{t}K(t,r)B_{s,r}^{h}(i)dB_{r}(j)\right)^{2}\right]+E\left[\left(\int_{0}^{s}\delta K_{s,t}\left(r\right)B_{r,s}^{h}(i)dB_{r}(j)\right)^{2}\right]
\]
\[
=E\left[\left(\int_{s}^{t}K(t,r)\int_{s}^{r}K(r,u)dB_{u}(i)dB_{r}(j)\right)^{2}\right]
\]
\[
+E\left[\left(\int_{s}^{t}K(t,r)\int_{0}^{s}\delta K_{s,r}\left(u\right)dB_{u}(i)dB_{r}\left(j\right)\right)^{2}\right]
\]
\[
+E\left[\left(\int_{0}^{s}\delta K_{s,t}\left(r\right)\int_{r}^{s}K(s,u)dB_{u}(i)dB_{r}(j)\right)^{2}\right]
\]
\[
+E\left[\left(\int_{0}^{s}\delta K_{s,t}\left(r\right)\int_{0}^{r}\delta K_{r,s}\left(u\right)dB_{u}(i)dB_{r}(j)\right)^{2}\right]
\]
 
\[
=:\sum_{k=1}^{4}I_{k}
\]
 Treating each term separately , we get for $I_{1}$ 
\[
I_{1}=\int_{s}^{t}\int_{s}^{r}\left(t-r\right)^{2h(t)-1}\left(r-u\right)^{2h(r)-1}dudr
\]
\[
=\int_{s}^{t}\left(t-r\right)^{2h(t)-1}\left(r-s\right)^{2h(r)}dr.
\]
set $r=s+\tau\left(t-s\right)$, and see that 
\[
=\int_{0}^{1}\left(t-s-\tau\left(t-s\right)\right)^{2h(t)-1}\tau^{2h(s+\tau\left(t-s\right))}\left(t-s\right)^{2h(s+\tau\left(t-s\right))}\left(t-s\right)d\tau
\]
\[
=\left(t-s\right)^{2h(t)}\int_{0}^{1}\left(t-s\right)^{2h(s+\tau\left(t-s\right))}\left(1-\tau\right)^{2h\left(t\right)-1}\tau^{2h(s+\tau\left(t-s\right))}d\tau
\]
\[
\leq C\left(t-s\right)^{4\max\left(h(t),h(s)\right)}\tilde{B}\left(2h_{*},1-2h^{*}\right),
\]
where $\tilde{B}$ is the Beta function and we have used the assumption
on $h$, 
\[
\sup_{s,r\in\left[0,T\right]}\left(r-s\right)^{h(r)-h(s)}\leq C.
\]
 Next check 
\[
I_{2}=\int_{s}^{t}\left(t-r\right)^{2h(t)-1}\int_{0}^{s}\left(\left(r-u\right)^{h(r)-\frac{1}{2}}-\left(s-u\right)^{h(s)-\frac{1}{2}}\right)^{2}dudr,
\]
and consider the ``inner integral'' first and set $s-u=l$ and $l=\left(r-s\right)y$
then 
\[
\int_{0}^{s}\left(\left(r-u\right)^{h(r)-\frac{1}{2}}-\left(s-u\right)^{h(s)-\frac{1}{2}}\right)^{2}du
\]
\[
=\int_{0}^{s/\left(r-s\right)}\left(\left(r-s\right)^{h(r)-\frac{1}{2}}\left(1+y\right)^{h(r)-\frac{1}{2}}-\left(r-s\right)^{h(s)-\frac{1}{2}}y^{h(s)-\frac{1}{2}}\right)^{2}\left(r-s\right)dy
\]
 
\[
\leq C_{T}\left(r-s\right)^{2h(s)}\int_{0}^{\infty}\left(\left(r-s\right)^{h(r)-h(s)}\left(1+y\right)^{h(r)-\frac{1}{2}}-y^{h(s)-\frac{1}{2}}\right)^{2}dy
\]
Notice that by assumption on $h$, $\sup_{s,r\in\left[0,T\right]}\left(r-s\right)^{h(r)-h(s)}\leq C,$
and the fact that $h(t)\in\left[h_{*},h^{*}\right]\subset\left(0,1\right)$
and hence there exists a constant $C_{T}$ s.t. 
\[
\int_{0}^{\infty}\left(\left(r-s\right)^{h(r)-h(s)}\left(1+y\right)^{h(r)-\frac{1}{2}}-y^{h(s)-\frac{1}{2}}\right)^{2}dy\leq C_{T}.
\]
Combining this, we get 
\[
I_{2}\leq C_{T}\int_{s}^{t}\left(t-r\right)^{2h(t)-1}\left(r-s\right)^{2h(s)}dr,
\]
using similar estimates as for $I_{1}$, we get 
\[
I_{2}\lesssim\left(t-s\right)^{4\max\left(h(t),h(s)\right)}.
\]
 Next consider $I_{3},$and see that 
\[
I_{3}=\int_{0}^{s}\left(\left(t-r\right)^{h(t)-\frac{1}{2}}-\left(s-r\right)^{h(s)-\frac{1}{2}}\right)^{2}\int_{r}^{s}\left(s-u\right)^{2h(s)-1}dudr
\]
\[
=\int_{0}^{s}\left(\left(t-r\right)^{h(t)-\frac{1}{2}}-\left(s-r\right)^{h(s)-\frac{1}{2}}\right)^{2}\left(s-r\right)^{2h(s)}dr.
\]
Again, set $s-r=l$ and $l=\left(t-s\right)x$ we get 
\[
\leq\left(t-s\right)^{4\max\left(h(t),h(s)\right)}\int_{0}^{\infty}\left(\left(1+x\right)^{h(t)-\frac{1}{2}}-\left(t-s\right)^{h(s)-h(t)}x^{h(s)-\frac{1}{2}}\right)^{2}dx,
\]
where the convergence of the integral was proven for $I_{2}.$ At
last, we look at 
\[
I_{4}=\int_{0}^{s}\left(\left(t-u\right)^{h(t)-\frac{1}{2}}-\left(s-u\right)^{h(s)-\frac{1}{2}}\right)^{2}
\]
\[
\times\int_{0}^{r}\left(\left(s-u\right)^{h(s)-\frac{1}{2}}-\left(r-u\right)^{h(r)-\frac{1}{2}}\right)^{2}dudr,
\]
 and use the the results obtained from $I_{2}$ and $I_{3}$ to see
that 
\[
I_{4}\lesssim|t-s|^{4\max\left(h(t),h(s)\right)}.
\]
 Next we must check the case when $i=j$, i.e 
\[
\mathbb{B}_{s,t}^{i,i}=\delta\left(\int_{0}^{\cdot}K(\cdot,r)B_{r}^{h}(i)dB_{r}(i)\right)_{s,t}-B_{s}^{h}(i)B_{s,t}^{h}(i)
\]
 where the integral is of Itô type. Using the convention $K\left(t,r\right)=K(t,r)1_{\left[r\leq t\right]}$,
we have the Itô $\left(\partial\right)$-Stratonovich$\left(d\right)$conversion
( see Lemma 2.4 in \cite{NuaTInd}) we get 
\[
\delta\left(\int_{0}^{\cdot}K(\cdot,r)\int_{0}^{r}K\left(r,u\right)dB_{u}(i)dB_{r}(i)\right)_{s,t}
\]
\[
=\delta\left(\int_{0}^{\cdot}K(\cdot,r)\int_{0}^{r}K\left(r,u\right)\partial B_{u}(i)\partial B_{r}(i)\right)_{s,t}
\]
\[
+\delta\left(\int_{0}^{\cdot}K(\cdot,r)\int_{0}^{r}K\left(r,u\right)dudr\right)_{s,t},
\]
and we can therefore write 
\[
\mathbb{B}_{s,t}^{i,i}=M_{s,t}+V_{s,t}-B_{s}^{h}(i)B_{s,t}^{h}(i).
\]
 The term $M_{s,t}-B_{s}^{h}(i)B_{s,t}^{h}(i)$ can be shown to satisfy
$E\left[\left(M_{s,t}-B_{s}^{h}(i)B_{s,t}^{h}(i)\right)^{2}\right]\lesssim|t-s|^{4\max\left(h(t),h(s)\right)}$
just as we did in the first art with Stratonovich integrals. It therefore
remains to show that $|\delta\left(\int_{0}^{\cdot}K(\cdot,r)\int_{0}^{r}K\left(r,u\right)dudr\right)_{s,t}|\lesssim|t-s|^{4\max\left(h(t),h(s)\right)},$
but this is also clear from the calculations of $I_{4}$. 
\end{proof}
Now that we have a construction of the iterated integral of the multifractional
Brownian motion on all intervals where its regularity function takes
values below or equal to $\frac{1}{2}$, we will now consider when
the regularity function takes values above $\frac{1}{2}$. We give
the following proposition, but we drop the notation of $h$ in $B^{h}$
to simplify notation.
\begin{prop}
Consider an interval $(\rho_{2k},\rho_{2k+1}]$ as defined above such
that the process $B^{|(\rho_{2k},\rho_{2k+1}]}$ has regularity function
$\alpha_{t}>\frac{1}{2}$ for $t\in(\rho_{2k},\rho_{2k+1}]$. Then
we define
\[
\mathbb{B}_{u,v}^{(\rho_{2k},\rho_{2k+1}]}:=\delta\left(\int_{0}^{\cdot}B_{r}dB_{r}\right)_{u,v}-B_{u}B_{u,v}\,\,\,for\,\,\,u,v\in(\rho_{2k},\rho_{2k+1}]
\]
 where $\delta\left(\int_{0}^{\cdot}B_{r}dB_{r}\right)_{u,v}$ is
interpreted in a Young sense, and we have 
\[
\parallel\mathbb{B}_{u,v}^{(\rho_{2k},\rho_{2k+1}]}\parallel_{2\alpha\left(\cdot\right)\wedge1;(\rho_{2k},\rho_{2k+1}]}<\infty
\]
 and Chen's relation is satisfied on $(\rho_{2k},\rho_{2k+1}]$. 
\end{prop}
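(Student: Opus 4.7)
The plan is to obtain this proposition as a direct application of the variable order Young integration developed in Proposition \ref{prop:Young integrals of variable order}. On the interval $I := (\rho_{2k}, \rho_{2k+1}]$ the hypothesis $\alpha(t) > \frac{1}{2}$ immediately gives the complementary local Young condition $\inf_{t \in I}(\alpha(t) + \alpha(t)) > 1$. Taking $Y = X = B$ and $\varXi_{u,v} = B_u B_{u,v}$ in that proposition, the sewing map $\mathcal{I}$ provides the candidate integral $\int_u^v B_r dB_r := (\mathcal{I}\varXi)_{u,v}$ together with the estimate
\[
|\mathbb{B}_{u,v}^{|(\rho_{2k},\rho_{2k+1}]}| = |(\mathcal{I}\varXi)_{u,v} - B_u B_{u,v}| \leq C_{\beta} \parallel B \parallel_{\alpha(\cdot); I}^{2} |v-u|^{\beta},
\]
where $\beta = \inf_{t \in I} 2\alpha(t) > 1$.

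For the regularity bound, I would note that since $2\alpha(t) > 1$ throughout $I$ one has $2\alpha(\cdot) \wedge 1 = 1$, so the claimed norm reduces to
\[
\parallel \mathbb{B}^{|(\rho_{2k},\rho_{2k+1}]} \parallel_{2\alpha(\cdot)\wedge 1; I} = \sup_{u < v \in I;\, |v-u| \leq 1} \frac{|\mathbb{B}_{u,v}^{|(\rho_{2k},\rho_{2k+1}]}|}{|v-u|}.
\]
The sewing estimate then yields $|\mathbb{B}_{u,v}^{|(\rho_{2k},\rho_{2k+1}]}| \lesssim |v-u|^{\beta} \leq |v-u|$ for $|v-u| \leq 1$, and the norm is finite.

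For Chen's relation on $I$, I would write $Z_v := (\mathcal{I}\varXi)_{0,v}$ and use the additivity $Z_{s,t} = Z_{s,u} + Z_{u,t}$ built into the sewing construction as a Riemann-type limit (cf.\ the remark following Lemma \ref{lem:-SEWING LEMMA}). For $s < u < t$ in $I$ this gives, after cancellation,
\[
\delta(\mathbb{B}^{|(\rho_{2k},\rho_{2k+1}]})_{s,u,t} = (Z_{s,t} - B_s B_{s,t}) - (Z_{s,u} - B_s B_{s,u}) - (Z_{u,t} - B_u B_{u,t}) = B_{s,u} B_{u,t},
\]
which is the required Chen relation.

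The argument has no serious obstacle: the entire content is packaged in Proposition \ref{prop:Young integrals of variable order}. The only mild subtlety is that the target variable H\"older exponent $2\alpha(\cdot) \wedge 1$ is weaker on $I$ than the flat exponent $\beta > 1$ produced by sewing, so the regularity bound comes for free; if one instead wanted the sharper pointwise estimate $|v-u|^{2\alpha(s)}$ on $I$, one would have to rerun the estimate inside the sewing lemma using the variable exponent from the outset, rather than collapsing it to the infimum $\beta$.
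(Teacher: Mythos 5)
Your proof is correct and follows essentially the same route as the paper: both obtain the object and its bound directly from Proposition \ref{prop:Young integrals of variable order}, observe that $2\alpha\left(\cdot\right)\wedge1=1$ on this interval so the sewing estimate with exponent $\beta>1$ suffices, and note that Chen's relation follows from the additivity of the Young integral. The only detail the paper adds that you omit is extending $\mathbb{B}$ to the left endpoint $\rho_{2k}$ of the half-open interval via the limit $u\rightarrow\rho_{2k}$ so that the semi-norm over the closed interval makes sense, a technicality that does not affect the substance.
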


\begin{proof}
This follows from an application of Proposition \ref{prop:Young integrals of variable order},
however, we must be carefull with the semi norm $\parallel\mathbb{B}_{u,v}^{(\rho_{2k},\rho_{2k+1}]}\parallel_{2\alpha\left(\cdot\right)\wedge1;\left[\rho_{2k},\rho_{2k+1}\right]},$but
we can consider 
\[
\mathbb{B}_{\rho_{2k},v}^{(\rho_{2k},\rho_{2k+1}]}=\delta\left(\int_{0}^{\cdot}B_{r}dX_{r}\right)_{\rho_{2k},v}-B_{\rho_{2k}}B_{\rho_{2k},v}
\]
\[
:=\lim_{u\rightarrow\rho_{2k}}\delta\left(\int_{0}^{\cdot}B_{r}dB_{r}\right)_{u,v}-B_{u}B_{u,v},
\]
which is of sufficient regularity $P-a.s.$ (in fact, this regularity
is $1$). Chen's relation is of course not affected by this consideration. 
\end{proof}
\begin{cor}
The iterated integral $\mathbb{B}^{|\left[\rho_{i},\rho_{i+1}\right]}$
as constructed in Lemma \ref{lem:MbM rough path} satisfy Chen's relation
and the analytic requirements such that $\left(B^{h,|\left[\rho_{i},\rho_{i+1}\right]},\mathbb{B}^{|\left[\rho_{i},\rho_{i+1}\right]}\right)\in\mathscr{C}_{2}^{h\left(\cdot\right)-}\left(\left[\rho_{i},\rho_{i+1}\right];\mathbb{R}^{d}\right)$
$P$-a.s. and it follows that there exists a lift $\mathcal{C}^{\alpha\left(\cdot\right)}\left(\left[0,T\right]\right)\ni B^{h}\mapsto\left(B^{h,},\mathbb{B}\right)\in\mathscr{C}_{2}^{h\left(\cdot\right)-}\left(\left[0,T\right];\mathbb{R}^{d}\right)$.
\end{cor}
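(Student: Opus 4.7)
The plan is to verify the two ingredients needed to apply the gluing theorem from the previous subsection: on each subinterval $[\rho_i,\rho_{i+1}]$ of the partition adapted to $h$ crossing $\tfrac12$, show that the locally constructed $\mathbb{B}^{|[\rho_i,\rho_{i+1}]}$ satisfies Chen's relation and lies in $\mathcal{C}_2^{2h(\cdot)\wedge1-}([\rho_i,\rho_{i+1}]^2;\mathbb{R}^{d\times d})$ almost surely. Once these local statements are secured, the lift on the whole of $[0,T]^2$ is produced by invoking the gluing theorem above, which constructs a single object $\mathbb{B}$ satisfying Chen's relation globally and bounded in the appropriate variable order norm.

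The algebraic verification is uniform across both regimes. Writing
\[
\mathbb{B}_{s,t}=\delta\Bigl(\int_0^{\cdot}K(\cdot,r)B_r^{h}\,dB_r\Bigr)_{s,t}-B_s^{h}B_{s,t}^{h},
\]
and using that $t\mapsto \delta g_{s,t}$ is additive in the sense $\delta g_{s,t}=\delta g_{s,u}+\delta g_{u,t}$, a two line computation gives $\mathbb{B}_{s,t}-\mathbb{B}_{s,u}-\mathbb{B}_{u,t}=B_{s,u}^{h}B_{u,t}^{h}$, so Chen's relation holds on every subinterval regardless of whether the local construction is Young or It\^o-Skorokhod. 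For the intervals where $h>\tfrac12$, the variable order Young estimate of Proposition \ref{prop:Young integrals of variable order} gives both the definition and the required variable order regularity of $\mathbb{B}^{|(\rho_{2k},\rho_{2k+1}]}$ at once, so nothing further is needed on those pieces.

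The main work lies in upgrading the $L^2$ estimate of Lemma \ref{lem:MbM rough path} on intervals where $h\le\tfrac12$ to almost sure variable order H\"older regularity of order $2h(\cdot)-\epsilon$. Since for each fixed $(s,t)$ the random variable $\mathbb{B}_{s,t}$ lies in the second Wiener chaos of the underlying Brownian motion (the drift correction in the diagonal It\^o-Stratonovich conversion being deterministic and of the same scaling), Gaussian hypercontractivity yields the $L^p$ bound
\[
\mathbb{E}\bigl[|\mathbb{B}_{s,t}|^p\bigr]^{1/p}\le C_p|t-s|^{2\max(h(t),h(s))}
\]
for every $p\ge2$. A two parameter Kolmogorov/Garsia-Rodemich-Rumsey argument in the variable order setting, namely the Gagliardo type embedding stated in the appendix applied with a sufficiently large $p$, then promotes this to almost sure membership in $\mathcal{C}_2^{2h(\cdot)-\epsilon}$ for arbitrarily small $\epsilon>0$; this is the step I expect to be the main technical obstacle, and it is precisely where the slight loss of regularity giving rise to the $h(\cdot)-$ notation appears.

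Finally, combining the two regimes, on each $[\rho_i,\rho_{i+1}]^2$ we have Chen's relation and $\mathbb{B}^{|[\rho_i,\rho_{i+1}]}\in\mathcal{C}_2^{2h(\cdot)\wedge1-}$ almost surely. Applying the gluing theorem of the previous subsection to the finite diagonal cover $\{[\rho_i,\rho_{i+1}]^2\}$ yields an object $\mathbb{B}$ defined on $[0,T]^2$ that extends every $\mathbb{B}^{|[\rho_i,\rho_{i+1}]}$, satisfies the global Chen's relation and has finite $\|\cdot\|_{2h(\cdot)\wedge1-;[0,T]}$ norm. Hence $(B^{h},\mathbb{B})\in\mathscr{C}_2^{h(\cdot)-}([0,T];\mathbb{R}^{d})$ almost surely, which is the claimed lift.
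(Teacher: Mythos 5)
Your proposal follows essentially the same route as the paper: verify Chen's relation algebraically from the defining formula, upgrade the $L^{2}$ bound of Lemma \ref{lem:MbM rough path} to $L^{p}$ via second-Wiener-chaos hypercontractivity, feed this into the Gagliardo-type embedding of Lemma \ref{lem:Gagliardo Embedding} to get almost sure variable order regularity (with the $\epsilon$-loss explaining the $h(\cdot)-$ notation), and glue the local pieces with the theorem of the preceding subsection. If anything, your writeup is more explicit than the paper's own proof, which trails off before completing the final gluing step.
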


\begin{proof}
First note that 
\[
\mathbb{B}_{s,t}^{i,j}=\delta\left(\int_{0}^{\cdot}K(t,r)B_{r}^{h}(i)dB_{r}(j)\right)_{s,t}-B_{s}^{h}(i)B_{s,t}^{h}(j),
\]
and it is easily seen that 
\[
\mathbb{B}_{s,t}^{i,j}-\mathbb{B}_{s,u}^{i,j}-\mathbb{B}_{u,t}^{i,j}=B_{s,u}^{h}(i)B_{u,t}^{h}(j).
\]
 By the above relation we know 
\[
\mathbb{B}_{0,t}-\mathbb{B}_{0,s}=\mathbb{B}_{s,t}+B_{0,s}^{h}B_{s,t}^{h}.
\]
To study the regularity of the mapping $t\mapsto\mathbb{B}_{0,t}$
we know by Lemma \ref{lem:Gagliardo Embedding} that it is sufficient
to check that for all $p>\frac{1}{h_{*}}$
\[
\int_{\left[\rho_{i},\rho_{i+1}\right]^{2}}\frac{E\left[|\mathbb{B}_{s,t}|^{2p}\right]}{|t-s|^{1+\max\left(\gamma\left(t\right),\gamma\left(s\right)\right)2p}}dsdt<\infty.
\]
But since we know that $E\left[|\mathbb{B}_{s,t}|^{2}\right]\lesssim|t-s|^{\max\left(h\left(t\right),h(s)\right)},$
and since $\mathbb{B}$ belongs to the second Wiener chaos, we have
equivalence in $L^{p}$ norms, and hence 
\[
\int_{\left[\rho_{i},\rho_{i+1}\right]^{2}}\frac{E\left[|\mathbb{B}_{s,t}|^{2p}\right]}{|t-s|^{1+\max\left(\gamma\left(t\right),\gamma\left(s\right)\right)2p}}dsdt
\]
\[
\leq C\int_{\left[\rho_{i},\rho_{i+1}\right]^{2}}\frac{E\left[|\mathbb{B}_{s,t}|^{2}\right]^{p}}{|t-s|^{1+\max\left(\gamma\left(t\right),\gamma\left(s\right)\right)2p}}dsdt
\]
\[
\leq C\int_{\left[\rho_{i},\rho_{i+1}\right]^{2}}|t-s|^{\left(2\max\left(h\left(t\right),h\left(s\right)\right)-\max\left(\gamma\left(t\right),\gamma\left(s\right)\right)\right)2p-1}dsdt
\]
and the right hand side above is finite for $\left(2\max\left(h\left(t\right),h\left(s\right)\right)-\max\left(\gamma\left(t\right),\gamma\left(s\right)\right)\right)>0$
for all $p>\frac{1}{h_{*}}$, and hence we have the inclusion $t\mapsto\mathbb{B}_{0,t}\in\mathcal{C}^{\gamma\left(\cdot\right)}$
for $\gamma$ such that for all $t\in\left[0,T\right]$ , $0<\gamma\left(t\right)<h(t)$.
It now follows from 
\end{proof}
From the above theorems it is now clear that it is possible to study
multifractional Brownian motion in a pathwise framework, as there
exists at least one way of constructing the iterated integral. Under
sufficient continuity conditions on the regularity function, it is
possible to divide its domain in accordance to when it is above or
below $\frac{1}{2},$ and in this way we have shown that we can construct
the iterated integral locally on these sub domains and then glue them
together. The existence of these iterated integrals and variable order
rough paths is the basis on which we will build the next section on;
namely an introduction to variable order controlled paths. 
\begin{rem}
We will see in the next section that these iterated integrals play
a crucial role in constructing solutions to differential equations
driven by a Hölder noise with regularity below $\frac{1}{2}.$ We
can think of this as giving the the rough integral more information
about the process when the regularity of the process gets very low.
However, we do not need these integrals when the regularity function
$\alpha\left(t\right)>\frac{1}{2}$. Therefore, the construction of
the rough path on all of $\left[0,T\right]$ when $\alpha$ varies
above and below $\frac{1}{2}$ may seem unnecessary. Indeed, at the
sub-intervals of $\left[0,T\right]$ when $\alpha\left(t\right)>\frac{1}{2}$
these iterated integrals will vanish and not affect the limit of the
Riemann sum from the sewing lemma \ref{lem:-SEWING LEMMA} anyway!
However, by constructing the rough path over all of $\left[0,T\right]$
we do not need do not need to differentiate on the construction of
the solutions of differential equations in different spaces later.
In fact, when we will consider regularity functions below $\frac{1}{2}$,
we can not only consider a fixed point argument in the variable order
Hölder space, but we must also show a fixed point argument for what
is called the Gubinelli derivative of the solution. Essentially, it
would be sufficient to only construct the rough path of a variable
order path on the subsets where the regularity function is below $\alpha$,
and then construct solutions to differential equations separately,
by the same procedure as how we constructed the rough path. However,
you will be left with a collection of solutions which no longer lie
in similar spaces (i.e Hölder spaces vs spaces of controlled paths),
and therefore are much more difficult to glue together. We therefore
recommend to construct the rough path on all of $\left[0,T\right]$
before constructing solutions to variable order rough differential
equations. 
\end{rem}

\section{\label{sec:Controlled paths and variabel order RDE}Controlled variable
order paths and variable order RDE's. }

In this section we will generalize some existence and uniqueness results
of RDE's of the form 
\begin{equation}
dY=f\left(Y\right)d\mathbf{X},\,\,\,Y_{0}=y\label{eq:Variable order RDE}
\end{equation}
 where $\mathbf{X}\in\mathscr{C}_{2}^{\alpha\left(\cdot\right)}\left(\left[0,T\right];\mathbb{R}^{d}\right)$
with $\alpha:\left[0,T\right]\rightarrow\left(0,1\right)$ and $f:\mathbb{R}^{n}\rightarrow\mathbb{R}^{d\times n}$.
We will study these equations through the familiar theory of controlled
paths established in \cite{Gubinelli} and also later used in \cite{FriHai},
but generalize the spaces to account for the variable regularity of
the paths. 

Let us first define the space of variable order controlled path. 
\begin{defn}
\label{def:Controlled variable order path }Let $X\in\mathcal{C}^{\alpha\left(\cdot\right)}\left(\left[0,T\right];\mathbb{R}^{d}\right)$
with $\alpha:\left[0,T\right]\rightarrow\left(\frac{1}{3},1\right)$,
If $Y:\left[0,T\right]\rightarrow\mathbb{R}^{n}$ and $Y':\left[0,T\right]\rightarrow\mathbb{R}^{d\times n}$satisfy
the relation 
\[
Y_{s,t}=Y'_{s}X_{s,t}+R_{s,t}^{Y},
\]
where $|R_{s,t}^{Y}|\lesssim|t-s|^{2\alpha\left(s\right)}$ and $|Y'_{s,t}|\lesssim|t-s|^{\alpha\left(s\right)}$,
then we call the tuple $\left(Y,Y'\right)\in\mathcal{D}_{X}^{\alpha\left(\cdot\right)}\left(\left[0,T\right];\mathbb{R}^{n}\right)$
a (variable order) controlled path with respect to $X$ , and we define
the following semi-norm on $\mathcal{D}_{X}^{\alpha\left(\cdot\right)}$
\[
\parallel Y,Y'\parallel_{X,\alpha\left(\cdot\right)}:=\parallel Y'\parallel_{\alpha\left(\cdot\right)}+\parallel R^{Y}\parallel_{2\alpha\left(\cdot\right)}.
\]
Furthermore, the mapping $\left(Y,Y'\right)\mapsto|Y_{0}|+|Y'_{0}|+\parallel Y,Y'\parallel_{X,\alpha\left(\cdot\right)}$
induces another norm on $\mathcal{D}_{X}^{\alpha\left(\cdot\right)}$
which makes it a linear Banach space, and in fact is complete under
the metric induced by this norm. 
\end{defn}

Notice also that the Hölder regularity of the first order component
in a controlled path $\left(Y,Y'\right)$ is implied by the inequality
\begin{equation}
\parallel Y\parallel_{\alpha\left(\cdot\right)}\leq\parallel Y'\parallel_{\infty}\parallel X\parallel_{\alpha\left(\cdot\right)}+\parallel R^{Y}\parallel_{2\alpha\left(\cdot\right)}T^{\alpha},\label{eq:regularity of first component in a controlled path}
\end{equation}
obtained by the assumed relation in Definition \ref{def:Controlled variable order path }. 

It is quite straight forward to construct a integral of a controlled
path with respect to the path $X$. Using the sewing lemma, we present
the following proposition for variable order rough integration. 
\begin{prop}
\label{prop:Variable order rough integral }Let $\left(X,\mathbb{X}\right)\in\mathscr{C}^{\alpha\left(\cdot\right)}\left(\left[0,T\right];\mathbb{R}^{d}\right)$
and $\left(Y,Y'\right)\in\mathcal{D}_{X}^{\alpha\left(\cdot\right)}\left(\left[0,T\right];\mathbb{R}^{n}\right)$
for some $\alpha:\left[0,T\right]\rightarrow\left(\frac{1}{3},1\right)$,
then 
\[
|\int_{s}^{t}Y_{r}dX_{r}-Y_{s}X_{s,t}-Y'_{s}\mathbb{X}_{s,t}|
\]
\[
\lesssim|t-s|^{\beta}\left(\parallel X\parallel_{\alpha\left(\cdot\right)}+\parallel\mathbb{X}\parallel_{2\alpha\left(\cdot\right)\wedge1}\right)\left(\parallel Y,Y'\parallel_{X,\alpha\left(\cdot\right)}\right)
\]
\end{prop}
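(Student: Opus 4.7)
The plan is to show that the two-parameter function
\[
\varXi_{s,t}:=Y_{s}X_{s,t}+Y'_{s}\mathbb{X}_{s,t}
\]
fits the hypotheses of the variable-order sewing lemma (Lemma \ref{lem:-SEWING LEMMA}), and then define the rough integral as $\int_{s}^{t}Y_{r}dX_{r}:=(\mathcal{I}\varXi)_{s,t}$. The stated inequality will then be exactly the conclusion of the sewing lemma applied to $\varXi$. The first routine step is to verify $\varXi\in\mathcal{C}_{2}^{\alpha(\cdot)}$: since $|Y_{s}X_{s,t}|\lesssim\|Y\|_{\infty}\|X\|_{\alpha(\cdot)}|t-s|^{\alpha(s)}$ and $|Y'_{s}\mathbb{X}_{s,t}|\lesssim\|Y'\|_{\infty}\|\mathbb{X}\|_{2\alpha(\cdot)\wedge1}|t-s|^{2\alpha(s)\wedge1}$, and since $2\alpha(s)\wedge1\ge\alpha(s)$, we obtain the required local $\alpha(\cdot)$-regularity.

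The key algebraic step is to compute $(\delta\varXi)_{s,u,t}$. Using the decomposition $X_{s,t}=X_{s,u}+X_{u,t}$ together with Chen's relation $\mathbb{X}_{s,t}-\mathbb{X}_{s,u}-\mathbb{X}_{u,t}=X_{s,u}\otimes X_{u,t}$ (as in Definition \ref{def:variable order rough pah}) and the controlled-path relation $Y_{s,u}=Y'_{s}X_{s,u}+R_{s,u}^{Y}$, I expect the standard telescoping identity
\[
(\delta\varXi)_{s,u,t}=-R_{s,u}^{Y}X_{u,t}-Y'_{s,u}\mathbb{X}_{u,t}.
\]
With this identity the two terms can be controlled separately: the first by $\|R^{Y}\|_{2\alpha(\cdot)}\|X\|_{\alpha(\cdot)}|u-s|^{2\alpha(s)}|t-u|^{\alpha(u)}$, and the second by $\|Y'\|_{\alpha(\cdot)}\|\mathbb{X}\|_{2\alpha(\cdot)\wedge1}|u-s|^{\alpha(s)}|t-u|^{2\alpha(u)\wedge1}$.

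The main obstacle is to re-express these mixed local exponents as a single power of $|t-s|$ with a variable exponent $\beta(\cdot)$ whose infimum $\beta_{*}$ strictly exceeds $1$, so the sewing lemma applies. Here I would invoke Proposition \ref{prop:Equivalence of Variable H=0000F6lder norm} (together with assumption \textbf{A1}) to switch between $\alpha(s)$, $\alpha(u)$, and $\max(\alpha(s),\alpha(u))$ at the cost of a uniform multiplicative constant. Bounding $|u-s|^{a}|t-u|^{b}\le|t-s|^{a+b}$ for $s<u<t$, the first term yields an exponent $\ge 2\alpha(s)+\alpha(s)=3\alpha(s)$ and the second yields an exponent $\ge\alpha(s)+2\alpha(s)\wedge1$, both modulo the same equivalence trick. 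Setting
\[
\beta(t):=\bigl(3\alpha(t)\bigr)\wedge\bigl(\alpha(t)+2\alpha(t)\wedge1\bigr),
\]
the hypothesis $\alpha(t)>\tfrac{1}{3}$ gives $\beta_{*}>1$, and in particular $\beta(t)\ge 3\alpha(t)\wedge1$ everywhere.

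Finally, having established $\|\delta\varXi\|_{\beta(\cdot);[0,T]}\lesssim(\|X\|_{\alpha(\cdot)}+\|\mathbb{X}\|_{2\alpha(\cdot)\wedge1})\|Y,Y'\|_{X,\alpha(\cdot)}$, the sewing lemma produces a unique $\mathcal{I}\varXi\in\mathcal{C}^{\alpha(\cdot)}$ with
\[
|(\mathcal{I}\varXi)_{s,t}-\varXi_{s,t}|\le C_{\beta}\,\|\delta\varXi\|_{\beta(\cdot);[s,t]}\,|t-s|^{\beta_{*}},
\]
which, after substituting the bound on $\|\delta\varXi\|_{\beta(\cdot)}$, is precisely the claimed inequality. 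The integral is then defined by $\int_{s}^{t}Y_{r}dX_{r}:=(\mathcal{I}\varXi)_{s,t}$; uniqueness of the sewing map justifies calling this \emph{the} rough integral and agrees, where both are defined, with the Young construction of Proposition \ref{prop:Young integrals of variable order}.
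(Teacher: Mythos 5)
Your proposal is correct and follows essentially the same route as the paper: set $\varXi_{s,t}=Y_{s}X_{s,t}+Y'_{s}\mathbb{X}_{s,t}$, use Chen's relation and the controlled-path decomposition to get $\delta\varXi_{s,u,t}=-R_{s,u}^{Y}X_{u,t}-Y'_{s,u}\mathbb{X}_{u,t}$ (the paper records this up to an immaterial sign), and apply the variable-order sewing lemma with the Hölder estimates from the Young-integral proposition. Your write-up is in fact more explicit than the paper's, which leaves the identification of $\beta(\cdot)$ and the verification $\beta_{*}>1$ from $\alpha>\tfrac{1}{3}$ to the reader.
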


\begin{proof}
Set $\varXi_{s,t}=Y_{s}X_{s,t}+Y'_{s}\mathbb{X}_{s,t},$ and see that
by using the Chen relation, we obtain 
\[
\delta\varXi_{s,u,t}=Y'_{s,u}\mathbb{X}_{u,t}+R_{s,u}^{Y}X_{u,t}.
\]
The result now follows from the Sewing lemma \ref{lem:-SEWING LEMMA},
and using the Hölder regularity estimates similar to that obtained
in proposition \ref{prop:Young integrals of variable order}. 
\end{proof}
Let us also give lemma showing that functions of controlled paths
are still controlled path. 
\begin{lem}
\label{lem:Controll of functions of paths}Let $f\in C_{b}^{2}\left(\mathbb{R}^{n},\mathbb{R}^{n}\right)$
and $\left(Y,Y\right)\in\mathcal{D}_{X}^{\alpha\left(\cdot\right)}\left(\left[0,T\right];\mathbb{R}^{n}\right)$
with $X\in\mathcal{C}^{\alpha\left(\cdot\right)}\left(\left[0,T\right];\mathbb{R}^{d}\right)$
and $\alpha:\left[0,T\right]\rightarrow\left(\frac{1}{3},1\right).$
Then $\left(f\left(Y\right),f'\left(Y\right)Y'\right)\in\mathcal{D}_{X}^{\alpha\left(\cdot\right)}\left(\left[0,T\right];\mathbb{R}^{n}\right),$
and there exists a $C>0$ such that 
\[
\parallel f\left(Y\right),f'\left(Y\right)Y'\parallel_{X,\alpha\left(\cdot\right)}\leq C\left(|Y_{0}|+|Y'_{0}|+\parallel Y,Y'\parallel_{X,\alpha\left(\cdot\right)}\right).
\]
\end{lem}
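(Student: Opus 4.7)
The plan is to obtain the controlled-path decomposition of $f(Y)$ from a first-order Taylor expansion of $f$ along $Y$, read off $f'(Y)Y'$ as the natural candidate for the Gubinelli derivative, and then estimate the two summands that make up the semi-norm $\parallel\cdot,\cdot\parallel_{X,\alpha(\cdot)}$ separately. Concretely, I would first write
\[
f(Y)_{s,t} = f'(Y_s)\,Y_{s,t} + Q_{s,t}, \qquad |Q_{s,t}| \leq \tfrac{1}{2}\|f''\|_\infty |Y_{s,t}|^2,
\]
via the mean-value form of Taylor's theorem, and then substitute the controlled-path relation $Y_{s,t} = Y'_s X_{s,t} + R^Y_{s,t}$ to get
\[
f(Y)_{s,t} = \bigl(f'(Y_s)Y'_s\bigr) X_{s,t} + R^{f(Y)}_{s,t}, \qquad R^{f(Y)}_{s,t} := f'(Y_s) R^Y_{s,t} + Q_{s,t}.
\]
This identifies the pair and reduces the proof to estimating $\|R^{f(Y)}\|_{2\alpha(\cdot)}$ and $\|f'(Y)Y'\|_{\alpha(\cdot)}$.

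For the remainder, the first summand is bounded by $\|f'\|_\infty \|R^Y\|_{2\alpha(\cdot)} |t-s|^{2\alpha(s)}$, while the second is controlled using $|Y_{s,t}| \lesssim \|Y\|_{\alpha(\cdot)} |t-s|^{\alpha(s)}$ combined with inequality (\ref{eq:regularity of first component in a controlled path}) applied to $\|Y\|_{\alpha(\cdot)}$. This yields an estimate of the form
\[
\|R^{f(Y)}\|_{2\alpha(\cdot)} \lesssim \|f\|_{C_b^2} \bigl(\|R^Y\|_{2\alpha(\cdot)} + \|Y\|_{\alpha(\cdot)}^2\bigr).
\]
For the Gubinelli derivative I would split
\[
(f'(Y)Y')_{s,t} = f'(Y_t)\,Y'_{s,t} + \bigl(f'(Y_t) - f'(Y_s)\bigr) Y'_s
\]
and bound the two pieces by $\|f'\|_\infty \|Y'\|_{\alpha(\cdot)} |t-s|^{\alpha(s)}$ and $\|f''\|_\infty \|Y\|_{\alpha(\cdot)} \|Y'\|_\infty |t-s|^{\alpha(s)}$ respectively, using the Lipschitz continuity of $f'$. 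The supremum $\|Y'\|_\infty$ is in turn controlled by $|Y'_0| + \|Y'\|_{\alpha(\cdot)} T^{\alpha_*}$.

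Assembling these two estimates with (\ref{eq:regularity of first component in a controlled path}) applied once more to absorb $\|Y\|_{\alpha(\cdot)}$ into $|Y_0|+|Y'_0|+\parallel Y,Y'\parallel_{X,\alpha(\cdot)}$ produces the claimed bound. The main obstacle is a bookkeeping one rather than a conceptual one: the natural estimates are quadratic in the controlled-path norm (through the $|Y_{s,t}|^2$ term and through the product $\|Y\|_{\alpha(\cdot)} \|Y'\|_\infty$), so the constant $C$ in the conclusion must be understood to depend on $\|f\|_{C_b^2}$, $\|X\|_{\alpha(\cdot)}$, $T$, and on an a-priori bound for $\parallel Y,Y'\parallel_{X,\alpha(\cdot)}$ itself; the linear-looking form of the inequality hides this dependence inside $C$. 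Once one is willing to let $C$ depend on those quantities, the remaining work is purely algebraic manipulation of the two estimates above.
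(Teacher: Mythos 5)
Your proposal is correct and follows essentially the same route as the paper: a Taylor expansion to split $R^{f(Y)}_{s,t}$ into $f'(Y_s)R^{Y}_{s,t}$ plus a quadratic error term, and a product-rule splitting $f'(Y_t)Y'_{s,t}+\left(f'(Y_t)-f'(Y_s)\right)Y'_s$ for the regularity of the Gubinelli derivative. Your observation that the estimates are genuinely quadratic in $\parallel Y,Y'\parallel_{X,\alpha\left(\cdot\right)}$, so that the constant $C$ implicitly depends on an a priori bound for that norm, is a point the paper's own proof glosses over.
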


\begin{proof}
We must show two components; first show that $R_{s,t}^{f\left(Y\right)}$
defined as 
\[
f\left(Y\right)_{s,t}-f'\left(Y_{s}\right)Y'_{s}X_{s,t}=R_{s,t}^{f\left(Y\right)}
\]
 is of $2\alpha\left(\cdot\right)$ regularity, and then we will show
that $f'\left(Y\right)Y'$ is of $\alpha\left(\cdot\right)$ regularity.
Let us start with $R^{f\left(Y\right)}.$ Note that since $\left(Y,Y'\right)\in\mathcal{D}_{X}^{\alpha\left(\cdot\right)}$
we have $Y_{s,t}=Y'_{s}X_{s,t}+R_{s,t}^{Y}$ therefore 
\[
R_{s,t}^{f\left(Y\right)}=f\left(Y\right)_{s,t}-f'\left(Y_{s}\right)Y_{s,t}+f'\left(Y_{s}\right)R_{s,t}^{Y},
\]
and it is clear that 
\[
|f'\left(Y_{s}\right)R_{s,t}^{Y}|\leq\parallel f\parallel_{C_{b}^{1}}\parallel R^{Y}\parallel_{2\alpha\left(\cdot\right)}|t-s|^{2\alpha\left(s\right)}.
\]
 Moreover, note that by standard Taylor approximation of $f\left(Y_{t}\right)$
around $Y_{s}$ we have that 
\[
|f\left(Y\right)_{s,t}-f'\left(Y_{s}\right)Y_{s,t}|\lesssim\parallel f\parallel_{C_{b}^{2}}|Y_{s,t}|^{2}\lesssim\parallel Y,Y'\parallel_{X,\alpha\left(\cdot\right)}|t-s|^{2\alpha\left(s\right)},
\]
 which concludes that $|R_{s,t}^{f\left(Y\right)}|\leq C\parallel Y,Y'\parallel_{X,\alpha\left(\cdot\right)}|t-s|^{2\alpha\left(s\right)}.$
For the second part, we use that $|a\tilde{a}-b\tilde{b}|\lesssim|a||\tilde{a}-\tilde{b}|+|a-b||\tilde{b}|$
and get that 
\[
|f'\left(Y_{t}\right)Y'_{t}-f'\left(Y_{s}\right)Y'_{s}|\leq|f'\left(Y_{t}\right)||Y'_{t}-Y'_{s}|+|f'\left(Y_{t}\right)-f'\left(Y_{s}\right)||Y'_{s}|.
\]
 And it follows that 
\[
\parallel f'\left(Y\right)Y'\parallel_{\alpha\left(\cdot\right)}\leq C\left(\parallel f\parallel_{C_{b}^{2}}\right)\left(|Y_{0}|+|Y'_{0}|+\parallel Y,Y'\parallel_{X,\alpha\left(\cdot\right)}\right).
\]
\end{proof}
We will now show existence and uniqueness of solutions to equation
(\ref{eq:Variable order RDE}) by a fixed point argument in the space
of controlled paths $\mathcal{D}_{X}^{\alpha\left(\cdot\right)}$.
\begin{thm}
Let $\mathbf{X}=\left(X,\mathbb{X}\right)\in\mathscr{C}^{\alpha\left(\cdot\right)}\left(\left[0,T\right];\mathbb{R}^{d}\right)$
and let $f\in C_{b}^{3}\left(\mathbb{R}^{n};\mathbb{R}^{d\times n}\right)$,
then there exists a unique solution $\left(Y,Y'\right)\in\mathcal{D}_{X}^{\alpha\left(\cdot\right)}\left(\left[0,T\right];\mathbb{R}^{n}\right)$
to equation 
\[
dY_{t}=f\left(Y_{t}\right)d\mathbf{X}_{t},\,\,\,Y_{0}=y\in\mathbb{R}^{n}.
\]
 
\end{thm}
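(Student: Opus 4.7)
The plan is to prove existence and uniqueness via a Banach fixed point argument in the space of variable order controlled paths $\mathcal{D}_X^{\alpha(\cdot)}$, following the strategy of \cite{FriHai} but compensating for the lack of a uniform exponent by working in a slightly lower regularity space $\mathcal{D}_X^{\alpha(\cdot)-\epsilon}$ for some small $\epsilon>0$. This is exactly the same device that was used in the proof of Theorem \ref{thm:Delayed Young RDE}: passing to $\alpha(\cdot)-\epsilon$ produces a factor $T_0^\epsilon$ in the contraction constant that can be made arbitrarily small by shrinking the time interval, thereby replacing the missing uniform Hölder exponent.

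First I would define the solution map on a short interval $[0,T_0]$ by
\[
\mathcal{M}(Y,Y')_t := \Big(y+\int_0^t f(Y_r)\,d\mathbf{X}_r,\; f(Y_t)\Big),
\]
where the integral is the variable order rough integral of Proposition \ref{prop:Variable order rough integral }. Lemma \ref{lem:Controll of functions of paths} shows that $(f(Y),f'(Y)Y')\in \mathcal{D}_X^{\alpha(\cdot)}$ with a norm bound linear in $\|Y,Y'\|_{X,\alpha(\cdot)}$, so the integrand is indeed a controlled path and the rough integral is well defined. Proposition \ref{prop:Variable order rough integral } then gives
\[
(\mathcal{M}(Y,Y'))_{s,t} = f(Y_s)X_{s,t} + f'(Y_s)Y'_s\mathbb{X}_{s,t} + \tilde R_{s,t},
\]
with $|\tilde R_{s,t}|\lesssim |t-s|^{\beta}$ for some $\beta>2\alpha(s)$ (using that $3\alpha(s)>1$ and $2\alpha(s)<1$ locally), so that in particular $\mathcal{M}(Y,Y')\in\mathcal{D}_X^{\alpha(\cdot)}$ with Gubinelli derivative $f(Y)'= f'(Y)Y'$. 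A standard estimate, combining Proposition \ref{prop:Variable order rough integral } with Lemma \ref{lem:Controll of functions of paths}, yields that $\mathcal{M}$ maps a ball $\{(Y,Y'): Y_0=y,\, Y'_0=f(y),\,\|Y,Y'\|_{X,\alpha(\cdot)}\le M\}$ into itself provided $T_0$ is small enough.

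Next I would show contraction. For two controlled paths $(Y,Y'),(\tilde Y,\tilde Y')$ in that ball, one writes
\[
\mathcal{M}(Y,Y')-\mathcal{M}(\tilde Y,\tilde Y') = \Big(\int_0^\cdot (f(Y_r)-f(\tilde Y_r))\,d\mathbf{X}_r,\; f(Y)-f(\tilde Y)\Big),
\]
and estimates each term with Proposition \ref{prop:Variable order rough integral } applied to the controlled path $f(Y)-f(\tilde Y)$ with Gubinelli derivative $f'(Y)Y'-f'(\tilde Y)\tilde Y'$. The key estimate (requiring $f\in C_b^3$ in order to control differences of $f'(Y)$ in terms of $\|Y-\tilde Y,Y'-\tilde Y'\|_{X,\alpha(\cdot)}$) produces
\[
\|\mathcal{M}(Y,Y')-\mathcal{M}(\tilde Y,\tilde Y')\|_{X,\alpha(\cdot)-\epsilon;[0,T_0]} \le C\,T_0^{\epsilon}\,(1+\|X\|_{\alpha(\cdot)}+\|\mathbb{X}\|_{2\alpha(\cdot)\wedge 1})\,\|Y-\tilde Y,Y'-\tilde Y'\|_{X,\alpha(\cdot)-\epsilon;[0,T_0]}.
\]
Choosing $T_0$ small enough renders the constant strictly less than $1$, and Banach's fixed point theorem in the complete metric space $\mathcal{D}_X^{\alpha(\cdot)-\epsilon}([0,T_0])$ delivers a unique solution on $[0,T_0]$. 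The solution in fact lies in $\mathcal{D}_X^{\alpha(\cdot)}$ because its remainder inherits the regularity of $X$ and $\mathbb{X}$ from the defining rough integral representation.

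The only remaining work is to extend the local solution to all of $[0,T]$. Because the a priori norm bound on any solution depends on $\|X\|_{\alpha(\cdot)}$ and $\|\mathbb{X}\|_{2\alpha(\cdot)\wedge 1}$ on the full interval (not on the starting point of the local subinterval), the length $T_0$ of each local step can be chosen uniformly, so finitely many iterations cover $[0,T]$ and the pieces glue into a path in $\mathcal{D}_X^{\alpha(\cdot)}([0,T])$ by Lemma \ref{lem: Scaling of H=0000F6lder norms}. The main obstacle I expect is the technical bookkeeping in the contraction estimate: since $\alpha$ varies with $t$, one cannot simply factor out a single Hölder exponent, and the argument must be carried out locally, exploiting the equivalence of variable order norms from Proposition \ref{prop:Equivalence of Variable H=0000F6lder norm} to pass between evaluations at $s$ and at $t$, and using the $\epsilon$-loss trick to manufacture the small prefactor needed for the contraction. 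The requirement $f\in C_b^3$ (rather than $C_b^2$) enters precisely in the Lipschitz estimate for the map $(Y,Y')\mapsto(f'(Y)Y')$ in the $\alpha(\cdot)$-seminorm, which is needed to control the Gubinelli derivative of the difference.
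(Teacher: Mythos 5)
Your proposal is correct and follows essentially the same route as the paper: the same solution map, the same $\epsilon$-loss device to manufacture a $T_0^{\epsilon}$ prefactor, invariance of a ball in $\mathcal{D}_X^{\alpha(\cdot)-\epsilon}$ followed by a contraction estimate (where the paper makes the $C_b^3$ requirement explicit via the factorization $f(x)-f(y)=g(x,y)(x-y)$ and the Banach algebra property of controlled paths), and extension to $[0,T]$ by iteration together with Lemma \ref{lem: Scaling of H=0000F6lder norms}. No substantive difference.
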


\begin{proof}
We will only present a sketch of proof, as the proof is a simple generalization
of Theorem 8.4 in \cite{FriHai}. We will therefore advise the interested
reader to go through that proof first. Note that the solution is constructed
in $\mathcal{D}_{X}^{\alpha\left(\cdot\right)-\epsilon}\left(\left[0,T\right];\mathbb{R}^{n}\right)$,
as this makes the argument considerably simpler by using that for
$X\in\mathcal{C}^{\alpha\left(\cdot\right)}$ we have $\parallel X\parallel_{\alpha-\epsilon}\leq T^{\epsilon}\parallel X\parallel_{\alpha\left(\cdot\right)}$
and then shown to actually be included in $\mathcal{D}_{X}^{\alpha\left(\cdot\right)}$
which follows from the fact that the regularity of $Y$ is inherited
from the regularity of $X$. We construct the solution by a fixed
point argument in both the solution $Y$ itself, and $Y'$. Assume
$\left(Y,Y'\right)\in\mathcal{D}_{X}^{\alpha\left(\cdot\right)}$
and consider the ``solution operator'' $\mathcal{V}_{T}:\mathcal{D}_{X}^{\alpha\left(\cdot\right)-\epsilon}\rightarrow\mathcal{D}_{X}^{\alpha\left(\cdot\right)-\epsilon}$
defined by the mapping 
\[
\left(Y,Y'\right)\mapsto\mathcal{V}_{T}\left(Y,Y'\right):=\left\{ \left(y+\int_{0}^{t}f\left(Y_{r}\right)d\mathbf{X}_{r},f\left(Y_{r}\right)\right);t\in\left[0,T\right]\right\} .
\]
From proposition \ref{prop:Variable order rough integral } and lemma
\ref{lem:Controll of functions of paths} it is clear that $\mathcal{V}_{T}\left(Y,Y'\right)\in\mathcal{D}_{X}^{\alpha\left(\cdot\right)-\epsilon}.$
One then need to show invariance of a unit ball $\mathcal{B}_{T}\subset\mathcal{D}_{X}^{\alpha\left(\cdot\right)-\epsilon}$
centered at the trivial element $t\mapsto\left(\zeta_{t},\zeta'_{t}\right)=\left(y+f\left(y\right)X_{0,t},f\left(y\right)\right)\in\mathcal{D}_{X}^{\alpha\left(\cdot\right)-\epsilon}$
under the mapping $\mathcal{V}_{T}$. That is, we will first show
$\mathcal{V}_{T}\left(\mathcal{B}_{T}\right)\subset\mathcal{B}_{T}$,
which imply that we must show 
\[
\parallel\mathcal{V}_{T}\left(Y,Y'\right)-\left(\zeta,\zeta'\right)\parallel_{\alpha\left(\cdot\right)-\epsilon,X;\left[0,T\right]}\leq1.
\]
Actually, by the triangle inequality and the fact that the Hölder
norm of $\zeta'$ are zero, it is sufficient to show that 
\[
\parallel\mathcal{V}_{T}\left(Y,Y'\right)\parallel_{\alpha\left(\cdot\right)-\epsilon,X;\left[0,T\right]}\leq1.
\]
 This follows from a combination of proposition \ref{prop:Variable order rough integral }
and lemma \ref{lem:Controll of functions of paths} which tells us
\[
\parallel R^{\mathcal{V}_{T}\left(Y,Y'\right)}\parallel_{2\left(\alpha\left(\cdot\right)-\epsilon\right)}\leq C\left(\parallel X\parallel_{\alpha\left(\cdot\right)}+\parallel\mathbb{X}\parallel_{2\alpha\left(\cdot\right)\wedge1}\right)
\]
\[
\times\left(|Y_{0}|+|Y'_{0}|+\parallel Y,Y'\parallel_{X,\alpha\left(\cdot\right)-\epsilon}\right)T^{\epsilon},
\]
and using the above together with the estimate from equation \ref{eq:regularity of first component in a controlled path},
we have 
\[
\parallel f\left(Y\right)\parallel_{\alpha\left(\cdot\right)-\epsilon}\leq C\left(|Y_{0}|+|Y'_{0}|+\parallel Y,Y'\parallel_{X,\alpha\left(\cdot\right)-\epsilon}\right)\parallel X\parallel_{\alpha\left(\cdot\right)}T^{\epsilon}.
\]
Combining the above, gives us for some $C>0$ depending on $\left(X,\mathbb{X}\right)$
, $\alpha$, and $T$, 
\[
\parallel\mathcal{V}_{T}\left(Y,Y'\right)\parallel_{\alpha\left(\cdot\right)-\epsilon,X;\left[0,T\right]}\leq C\left(|Y_{0}|+|Y'_{0}|+\parallel Y,Y'\parallel_{X,\alpha\left(\cdot\right)-\epsilon}\right)T^{\epsilon}.
\]
It now follows that we can choose $T=T_{0}$ small enough such that
\[
\parallel\mathcal{V}_{T_{0}}\left(Y,Y'\right)\parallel_{\alpha\left(\cdot\right)-\epsilon,X;\left[0,T_{0}\right]}\leq1.
\]
 The next step is to show that $\mathcal{V}_{T}$ is a contraction
on $\mathcal{D}_{X}^{\alpha\left(\cdot\right)-\epsilon}$. Assume
that $Y,\tilde{Y}\in\mathcal{C}_{X}^{\alpha\left(\cdot\right)-\epsilon}$
both starting in $y\in\mathbb{R}^{d}$, and write $\triangle_{t}=f\left(Y\right)_{t}-f\left(\tilde{Y}\right)_{t}$
, and $\triangle_{t}'=f\left(Y\right)'_{t}-f\left(\tilde{Y}\right)'_{t}$
. We need to prove that there exists a $q\in\left(0,1\right)$ such
that 
\[
\parallel\int_{0}^{\cdot}\triangle_{r}dX_{r},\triangle\parallel_{\alpha\left(\cdot\right)-\epsilon,X;\left[0,T\right]}\leq q\parallel Y-\tilde{Y},Y'-\tilde{Y}'\parallel_{\alpha\left(\cdot\right)-\epsilon,X;\left[0,T\right]}.
\]
By Proposition \ref{prop:Variable order rough integral }, we know
that 
\[
\parallel\int_{0}^{\cdot}\triangle_{r}dX_{r},\triangle\parallel_{\alpha\left(\cdot\right)-\epsilon,X,\left[0,T\right]}\leq\parallel\triangle\parallel_{\alpha\left(\cdot\right)-\epsilon}
\]
\[
+C\left(\parallel X\parallel_{\alpha\left(\cdot\right)}+\parallel\mathbb{X}\parallel_{2\alpha\left(\cdot\right)\wedge1}\right)\parallel\triangle,\triangle'\parallel_{X,\alpha\left(\cdot\right)-\epsilon}T^{\epsilon}
\]
\[
\leq C\parallel f\parallel_{C_{b}^{2}}\parallel Y-\tilde{Y}\parallel_{\alpha\left(\cdot\right)-\epsilon}+C\parallel\triangle,\triangle'\parallel_{X,\alpha\left(\cdot\right)-\epsilon}T^{\epsilon}.
\]
But from equation \ref{eq:regularity of first component in a controlled path}
we know that 
\[
\parallel Y-\tilde{Y}\parallel_{\alpha\left(\cdot\right)-\epsilon}\leq C\parallel Y-\tilde{Y},Y'-\tilde{Y}'\parallel_{\alpha\left(\cdot\right)-\epsilon,X;\left[0,T\right]}T^{\epsilon}.
\]
 It therefore remains to prove that 
\[
\parallel\triangle,\triangle'\parallel_{X,\alpha\left(\cdot\right)-\epsilon}\leq C\parallel Y-\tilde{Y},Y'-\tilde{Y}'\parallel_{\alpha\left(\cdot\right)-\epsilon,X;\left[0,T\right]}.
\]
 Using the fact that $f$ is differentiable, we can write 
\[
f\left(x\right)-f\left(y\right)=g\left(x,y\right)\left(x-y\right),\,\,\,g\left(x,y\right)=\int_{0}^{1}Df\left(x\rho+\left(1-\rho\right)y\right)d\rho,
\]
 we have that 
\[
\triangle_{s}=g\left(Y_{s},\tilde{Y}_{s}\right)h\left(Y_{s},\tilde{Y}_{s}\right),
\]
where $h\left(x,y\right)=x-y$. For simplicity, write $g_{s}=g\left(Y_{s},\tilde{Y}_{s}\right)$,
and similarly for $h$. Since $f\in C_{b}^{3}$ we have that $g\in C_{b}^{2},$
and using Lemma \ref{lem:Controll of functions of paths} it follows
that 
\[
\parallel g,g'\parallel_{\alpha\left(\cdot\right)-\epsilon,X,\left[0,T\right]}<\parallel f\parallel_{C_{b}^{3}}.
\]
Moreover, it is shown in \cite{FriHai} in the proof of Theorem 8.4
(and it is a simple exercise to check) that $\mathcal{D}_{X}^{\alpha\left(\cdot\right)}$
is a Banach algebra, and in fact 
\[
\parallel gh,\left(gh\right)'\parallel_{\alpha\left(\cdot\right)-\epsilon,X,\left[0,T\right]}\leq\left(|g_{0}|+|g'_{0}|+\parallel g,g'\parallel_{\alpha\left(\cdot\right)-\epsilon,X,\left[0,T\right]}\right)
\]
\[
\times\left(|h_{0}|+|h'_{0}|+\parallel h,h'\parallel_{\alpha\left(\cdot\right)-\epsilon,X,\left[0,T\right]}\right).
\]
 Using that $h_{0}=h'_{0}=0,$ and the the previous estimates for
$\left(g,g'\right)$, we can see that 
\[
\parallel\triangle,\triangle'\parallel_{X,\alpha\left(\cdot\right)-\epsilon}\leq\parallel f\parallel_{C_{b}^{3}}\parallel Y-\tilde{Y},Y'-\tilde{Y}'\parallel_{\alpha\left(\cdot\right)-\epsilon,X;\left[0,T\right]}.
\]
 It is now clear that there exists a constant $C>0$ depending on
$X,\mathbb{X}$, $\alpha$,$\parallel f\parallel_{C_{b}^{3}}$, and
(uniformly) on $T$, such that 
\[
\parallel\int_{0}^{\cdot}\triangle_{r}dX_{r},\triangle\parallel_{\alpha\left(\cdot\right)-\epsilon,X;\left[0,T\right]}\leq C\parallel Y-\tilde{Y},Y'-\tilde{Y}'\parallel_{\alpha\left(\cdot\right)-\epsilon,X;\left[0,T\right]}T^{\epsilon},
\]
again, choose $T=\tilde{T}_{0}$ small enough such that $C\tilde{T}_{0}^{\epsilon}<1$.
Choose $T_{0}=T_{0}\wedge\tilde{T}_{0}$ and the existence of a unique
solution follows from the Banach fixed point theorem. Now, we must
iterate the procedure on the proceeding intervals $\left[T_{0},2T_{0}\right],..,\left[kT_{0},T\right]$
for some $k\in\mathbb{N}$ by considering the same equation but with
initial value being given by the solution on the previous interval.
Then using Lemma \ref{lem: Scaling of H=0000F6lder norms}, a unique
solution on the whole interval $\left[0,T\right]$ follows. 
\end{proof}
\begin{rem}
The use of variable order rough paths to consider delay equations
where attempted during the work on this paper, but proved to be more
difficult than in regular rough paths theory. The reason is that when
considering the fixed point argument of a variable order RDE, we must
consider the regularities both of the solution and its Gubinelli derivative.
However, the Gubinelli derivative of $\int_{0}^{\cdot}f\left(Y_{r-\tau}\right)dX_{r}$
is given by $f\left(Y_{r-\tau}\right)$ which has Hölder regularity
according to $\alpha\left(\cdot-\tau\right),$but the integral $\int_{0}^{\cdot}f\left(Y_{r-\tau}\right)dX_{r}$
inherits the $\alpha\left(\cdot\right)$ regularity of the driving
noise $X$. Thus the the topology relating to the space $\mathcal{C}_{X}^{\alpha\left(\cdot\right)}$
needed to be changed to consider this problem. We would therefore
like to study this in more detail at a later time, and for now only
cover regular variable order rough path theory, which we hope will
be useful in later research on variable order paths. 
\end{rem}

\section{Appendix}

We will here prove some technical results needed in the article. First,
denote by $B_{r}\left(x\right)=\left\{ y\in\mathbb{R}||y|\leq r\right\} ,$
and we use that for an integrabl function $f$, we write 
\[
\langle f\rangle_{\left[0,T\right]}=\frac{1}{T}\int_{0}^{T}f(y)dy.
\]
 
\begin{prop}
\label{prop:Lebesgues Trick...}Assume $p\geq1$ and $\gamma:\left[0,T\right]\rightarrow(1,p],$
and assume $f:\left[0,T\right]\rightarrow\mathbb{R}$ satisfies for
all $t\in\left[0,T\right]$
\[
\left[f\right]_{p,p\gamma\left(t\right);\left[0,T\right]}:=\left(\sup_{\epsilon>0,t\in\left[0,T-\epsilon\right]}\left\{ \epsilon^{-\gamma\left(t\right)p}\int_{B_{\frac{\epsilon}{2}}\left(t\right)}|f(x)-\langle f\rangle_{B_{\frac{\epsilon}{2}}\left(t\right)}|^{p}dx\right\} \right)^{\frac{1}{p}}<\infty,
\]
then for $0\leq R\leq R'$
\[
|\langle f\rangle_{B_{R}\left(t\right)}-\langle f\rangle_{B_{R'}\left(t\right)}|\leq\left[f\right]_{p,p\gamma\left(t\right);\left[0,T\right]}|B_{R'}\left(t\right)|^{\gamma\left(t\right)-\frac{1}{p}}.
\]
 
\end{prop}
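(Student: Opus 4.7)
The plan is to run a Campanato/Morrey-style dyadic telescoping argument. Fix $t\in[0,T]$ and, to keep notation light, write $\gamma:=\gamma(t)$ and $[f]:=[f]_{p,p\gamma(t);[0,T]}$. Set the dyadic radii
\[
R_k := R'\,2^{-k}, \qquad k=0,1,2,\ldots,
\]
and let $N$ be the smallest integer with $R_{N}\le 2R$. The first step is to telescope
\[
\langle f\rangle_{B_R(t)}-\langle f\rangle_{B_{R'}(t)}
=\sum_{k=0}^{N-1}\bigl(\langle f\rangle_{B_{R_{k+1}}(t)}-\langle f\rangle_{B_{R_k}(t)}\bigr)
+\bigl(\langle f\rangle_{B_R(t)}-\langle f\rangle_{B_{R_N}(t)}\bigr).
\]
The last term compares two nested balls of comparable radii, and is estimated by exactly the same argument as a single summand; I will therefore concentrate on the generic summand $\langle f\rangle_{B_{R_{k+1}}}-\langle f\rangle_{B_{R_k}}$.

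For each such pair, the constant $\langle f\rangle_{B_{R_k}(t)}$ averages to itself over $B_{R_{k+1}}(t)$, so
\[
\langle f\rangle_{B_{R_{k+1}}(t)}-\langle f\rangle_{B_{R_k}(t)}
=\frac{1}{|B_{R_{k+1}}(t)|}\int_{B_{R_{k+1}}(t)}\bigl(f(x)-\langle f\rangle_{B_{R_k}(t)}\bigr)\,dx.
\]
Apply Jensen's inequality in the form $|\mathrm{mean}(g)|\le(\mathrm{mean}|g|^p)^{1/p}$, then enlarge the integration set from $B_{R_{k+1}}(t)$ to $B_{R_k}(t)\supset B_{R_{k+1}}(t)$, and finally invoke the Campanato-type seminorm with $\epsilon=2R_k$ to bound the enlarged integral. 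Using $|B_{R_{k+1}}(t)|\simeq R_k$, this produces
\[
|\langle f\rangle_{B_{R_{k+1}}(t)}-\langle f\rangle_{B_{R_k}(t)}|
\le C\,[f]\,R_k^{\gamma-1/p}
\]
for a constant $C$ depending only on $p$ (and the dimensional factor $2$).

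Summing over $k$ gives a geometric series,
\[
\sum_{k=0}^{N-1}R_k^{\gamma-1/p}
=R'^{\,\gamma-1/p}\sum_{k=0}^{N-1}2^{-k(\gamma-1/p)}
\le \frac{R'^{\,\gamma-1/p}}{1-2^{-(\gamma-1/p)}},
\]
which converges because the hypothesis $\gamma(t)\in(1,p]$ and $p\ge 1$ force $\gamma-1/p>1-1/p\ge 0$. Since $|B_{R'}(t)|\simeq R'$, combining the last two displays yields the claim, with the constant implicit in $\le$ absorbing the various factors of $2$ and the geometric-series ratio. The only minor bookkeeping point is to check that every dyadic ball $B_{R_k}(t)$ appearing in the sum is admissible in the supremum defining $[f]$, which holds automatically provided the pair $(t,2R')$ is admissible; this is the mild assumption hidden in the statement that ensures the seminorm genuinely controls every scale used in the telescoping.
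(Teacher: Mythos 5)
Your argument is correct, but it is not the route the paper takes. The paper's proof is a single-step estimate: it writes $\langle f\rangle_{B_{R'}(t)}-\langle f\rangle_{B_{R}(t)}$ as the average over the \emph{large} ball $B_{R'}(t)$ of $f-\langle f\rangle_{B_{R}(t)}$, applies H\"older's inequality once, and then invokes the seminorm with $\epsilon\sim R'$. That is shorter, but it silently requires the seminorm to control $\int_{B_{R'}}|f-\langle f\rangle_{B_{R}}|^{p}$, i.e.\ oscillation about the mean over a ball \emph{different} from the ball of integration, which is not literally what $[f]_{p,p\gamma(t)}$ bounds; the paper glosses over this mismatch. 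Your dyadic telescoping through $R_k=R'2^{-k}$ avoids the issue entirely, since at each step the subtracted mean and the (enlarged) domain of integration are the same ball $B_{R_k}(t)$, matching the seminorm with $\epsilon=2R_k$ exactly; the price is the geometric-series constant $\left(1-2^{-(\gamma(t)-1/p)}\right)^{-1}$, which is harmless here because $\gamma(t)>1\geq 1/p$. Two small bookkeeping points: your conclusion carries a multiplicative constant $C=C(p,\gamma(t))$ whereas the statement is written with constant $1$ (irrelevant for the only place the proposition is used, in Lemma \ref{lem:Gagliardo Embedding}, but worth flagging); and the case $R=0$, which the statement permits and which is what the Lebesgue-differentiation step in Lemma \ref{lem:Gagliardo Embedding} actually needs, corresponds to letting $N\to\infty$ in your telescoping sum --- your series still converges, so nothing breaks, but you should say so explicitly rather than stopping at a finite $N$.
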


\begin{proof}
We know that $B_{R}\left(t\right)\subset B_{R'}\left(t\right)$ and
we have by the Hölder inequality 
\[
|\frac{1}{R'}\int_{B_{R'}\left(t\right)}f(x)-\langle f\rangle_{B_{R}\left(t\right)}dx|^{p}\leq\frac{R'^{\frac{p-1}{p}}}{R'}\left(\int_{B_{R'}\left(t\right)}|f(x)-\langle f\rangle_{B_{R}\left(t\right)}|^{p}dx\right)^{\frac{1}{p}}
\]
 and since $\gamma:\left[0,T\right]\rightarrow(1,p],$ we have 
\[
\leq R'^{\gamma\left(t\right)-\frac{1}{p}}\left(\sup_{\epsilon>}\epsilon^{-p\gamma\left(t\right)}\int_{B_{R'}\left(t\right)}|f(x)-\langle f\rangle_{B_{\epsilon}\left(t\right)}|^{p}dx\right)^{\frac{1}{p}},
\]
which completes our proof. 
\end{proof}
\begin{lem}
\label{lem:Gagliardo Embedding}Let $f:\left[0,T\right]\rightarrow\mathbb{R},$
be continuous and assume $\alpha\left(t\right)>\frac{1}{p}$ for all
$t\in\left[0,T\right]$. Then 
\[
\parallel f\parallel_{\alpha\left(\cdot\right);\left[0,T\right]}\leq\int_{0}^{T}\int_{0}^{T}\frac{|f(x)-f(y)|^{p}}{|x-y|^{1+\max\left(\gamma\left(x\right),\gamma\left(y\right)\right)p}}dxdt,
\]
where $\gamma\left(t\right)=\alpha\left(t\right)-\frac{1}{p}$. 
\end{lem}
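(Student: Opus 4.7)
The plan is to run a Campanato/Morrey-style telescoping on dyadic balls, using Proposition \ref{prop:Lebesgues Trick...} for the averages and interfacing with the Gagliardo seminorm via Jensen's inequality.

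First I would note that, since $f$ is continuous, the Lebesgue differentiation theorem gives $f(t)=\lim_{r\downarrow 0}\langle f\rangle_{B_r(t)}$ for every $t\in[0,T]$. Fix $s<t$ in $[0,T]$ and set $h=t-s$. Telescoping along the dyadic scales $r_{n}=h\,2^{-n}$ at each endpoint produces the decomposition
\[
f(t)-f(s)=\bigl(\langle f\rangle_{B_{h}(t)}-\langle f\rangle_{B_{h}(s)}\bigr)+\sum_{n\ge 0}\bigl(\langle f\rangle_{B_{r_{n+1}}(t)}-\langle f\rangle_{B_{r_{n}}(t)}\bigr)-\sum_{n\ge 0}\bigl(\langle f\rangle_{B_{r_{n+1}}(s)}-\langle f\rangle_{B_{r_{n}}(s)}\bigr).
\]
By Proposition \ref{prop:Lebesgues Trick...} each dyadic increment centered at $t$ is bounded by $[f]_{p,p\gamma(t)}\,r_{n}^{\gamma(t)}$ (and symmetrically at $s$), so summing the geometric series yields tails of order $h^{\alpha(t)}$ and $h^{\alpha(s)}$ respectively, where we used $\alpha(t)=\gamma(t)+\tfrac{1}{p}$. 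The bridging term $\langle f\rangle_{B_{h}(t)}-\langle f\rangle_{B_{h}(s)}$ is handled by writing both averages as averages over a common enclosing ball $B_{2h}(s)\supset B_{h}(s)\cup B_{h}(t)$ and invoking the same proposition; this is the step that produces the symmetric $\max(\alpha(t),\alpha(s))$ exponent in the final bound.

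Second, I would bound the Campanato seminorm $[f]_{p,p\gamma(t)}$ by the Gagliardo-type integral on the right-hand side using the classical double-integration identity
\[
\int_{B_{\epsilon/2}(t)}|f(x)-\langle f\rangle_{B_{\epsilon/2}(t)}|^{p}\,dx\le\frac{1}{|B_{\epsilon/2}(t)|}\int\!\!\int_{B_{\epsilon/2}(t)^{2}}|f(x)-f(y)|^{p}\,dx\,dy.
\]
Inserting the factor $|x-y|^{-(1+\max(\gamma(x),\gamma(y))p)}\cdot|x-y|^{1+\max(\gamma(x),\gamma(y))p}$, bounding $|x-y|\le\epsilon$ on $B_{\epsilon/2}(t)^{2}$, and using the continuity of $\gamma$ (inherited from the continuity of $\alpha$) to replace $\max(\gamma(x),\gamma(y))$ on the small ball by $\gamma(t)$ up to a multiplicative constant, gives
\[
\int_{B_{\epsilon/2}(t)}|f(x)-\langle f\rangle_{B_{\epsilon/2}(t)}|^{p}\,dx\;\lesssim\;\epsilon^{\gamma(t)p}\int_{0}^{T}\!\!\int_{0}^{T}\frac{|f(x)-f(y)|^{p}}{|x-y|^{1+\max(\gamma(x),\gamma(y))p}}\,dx\,dy,
\]
uniformly in $\epsilon$ and $t$. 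Taking the supremum over $\epsilon$ and $t$ controls $[f]_{p,p\gamma(t)}$ by (a power of) the right-hand side of the lemma.

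Combining the two steps yields $|f(t)-f(s)|\lesssim|t-s|^{\max(\alpha(t),\alpha(s))}$ times the Gagliardo integral, which by Proposition \ref{prop:Equivalence of Variable H=0000F6lder norm} is equivalent to the assertion $\|f\|_{\alpha(\cdot);[0,T]}<\infty$ with the claimed bound. The main obstacle I anticipate is the careful tracking of the variable exponent $\gamma(\cdot)$ through the Campanato telescoping: one must exploit hypothesis \textbf{A1} to ensure that on a small ball $B_{\epsilon}(t)$ one still has $\gamma(x)\approx\gamma(t)$, so that the exponent coming out of the Gagliardo integrand matches $\gamma(t)$ up to constants and the geometric series in the telescoping actually converges to a power of $h$ with the correct local exponent $\alpha(t)$ (rather than some global infimum of $\alpha$).
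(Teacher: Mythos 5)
Your proposal follows essentially the same route as the paper: it bounds the Campanato-type seminorm $[f]_{p,p\gamma(\cdot)}$ by the Gagliardo double integral via the Jensen/double-integration step, and then recovers the pointwise oscillation $|f(t)-f(s)|$ from averages over balls of radius comparable to $|t-s|$, using Proposition \ref{prop:Lebesgues Trick...} together with Lebesgue differentiation for the endpoint terms and a common enclosing ball for the bridging term, exactly as in the paper's argument. The only cosmetic difference is your dyadic telescoping of the averages, which is redundant here because Proposition \ref{prop:Lebesgues Trick...} already compares averages at arbitrary nested radii in a single step.
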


\begin{proof}
Consider the average of a function, i.e 
\[
\langle f\rangle_{\left[0,T\right]}=\frac{1}{T}\int_{0}^{T}f(y)dy,
\]
then for some $\xi\in\mathbb{R}$ , we have by Hölder's inequality
that 
\[
|\xi-\langle f\rangle_{\left[0,T\right]}|^{p}=\frac{1}{T^{p}}|\int_{0}^{T}\xi-f\left(y\right)dy|^{p}
\]
\[
\leq\frac{1}{T}\int_{0}^{T}|\xi-f\left(y\right)|^{p}dy.
\]
Consider now $\xi=\langle f\rangle_{B_{R}\left(t\right)}$ for some
$R>0$ and then 
\[
\int_{B_{R}\left(t\right)}|f(x)-\langle f\rangle_{B_{R}\left(t\right)}|^{p}dx\leq\frac{1}{R}\int_{B_{R}\left(t\right)}\int_{B_{R}\left(t\right)}|f(x)-f(y)|^{p}dxdy.
\]
We know that $|t-s|\leq R$ we have that for some $\gamma:\left[0,T\right]\rightarrow\left(\gamma_{*},\gamma^{*}\right)$
\[
\int_{B_{R}\left(t\right)}|f(x)-\langle f\rangle_{B_{R}\left(t\right)}|^{p}dx
\]
\[
\leq\frac{1}{R}\int_{B_{R}\left(t\right)}\int_{B_{R}\left(t\right)}\left(|x-y|^{1+\max\left(\gamma\left(x\right),\gamma\left(y\right)\right)p}\right)\frac{|f(x)-f(y)|^{p}}{|x-y|^{1+\max\left(\gamma\left(x\right),\gamma\left(y\right)\right)p}}dxdy
\]
 
\[
\leq R^{\tilde{\gamma}\left(t,s\right)p}\int_{B_{R}\left(t\right)}\int_{B_{R}\left(t\right)}\frac{|f(x)-f(y)|^{p}}{|x-y|^{1+\max\left(\gamma\left(x\right),\gamma\left(y\right)\right)p}}dxdy
\]
 where $\gamma\left(t,\epsilon\right)=\max_{u\in\left[s,t\right]}\left\{ \alpha\left(u\right)\right\} $.
From this, we can conclude that 
\[
[f]_{p,\gamma\left(\cdot\right);\left[0,T\right]}^{p}:=\sup_{R>0,t\in\left[0,T-\epsilon\right]}\left\{ R^{-\gamma\left(t\right)p}\int_{B_{R}\left(t\right)}|f(x)-\langle f\rangle_{B_{R}\left(t\right)}|^{p}dx\right\} 
\]
\[
\leq\int_{0}^{T}\int_{0}^{T}\frac{|f(x)-f(y)|^{p}}{|x-y|^{1+\max\left(\gamma\left(x\right),\gamma\left(y\right)\right)p}}dxdy.
\]
Moreover, for $t,s\in\left[0,T\right]$ with $|t-s|=R$ we have 
\[
|f\left(t\right)-f(s)|\leq|f(t)-\langle f\rangle_{B_{2R}\left(t\right)}|
\]
\[
+|\langle f\rangle_{B_{2R}\left(t\right)}-\langle f\rangle_{B_{2R}\left(s\right)}|+|\langle f\rangle_{B_{2R}\left(s\right)}-f(s)|.
\]
By the continuity of $f$ we have by Lebesgue's differentiation theorem
and Proposition \ref{prop:Lebesgues Trick...}, we have that 
\[
|f(t)-\langle f\rangle_{B_{2R}\left(t\right)}|\leq\left[f\right]_{p,p\gamma\left(\cdot\right)}|B_{2R}\left(t\right)|^{\gamma\left(t\right)-\frac{1}{p}},
\]
and similarly for the third term 
\[
|\langle f\rangle_{B_{2R}\left(s\right)}-f(s)|\leq\left[f\right]_{p,p\gamma\left(\cdot\right)}|B_{2R}\left(s\right)|^{\gamma\left(s\right)-\frac{1}{p}}.
\]
For the middle term, we can see that for some $f(z)$ we have 
\[
|\langle f\rangle_{B_{2R}\left(t\right)}-\langle f\rangle_{B_{2R}\left(s\right)}|\leq|f(z)-\langle f\rangle_{B_{2R}\left(t\right)}|+|f(z)-\langle f\rangle_{B_{2R}\left(s\right)}|,
\]
 and we can choose $z\in B_{2R}\left(t\right)\cap B_{2R}\left(s\right)$
and integrate w.r.t. $z$ and find 
\[
|B_{2R}\left(t\right)\cap B_{2R}\left(s\right)||\langle f\rangle_{B_{2R}\left(t\right)}-\langle f\rangle_{B_{2R}\left(s\right)}|
\]
\[
\leq\int_{B_{2R}\left(t\right)\cap B_{2R}\left(s\right)}|f(z)-\langle f\rangle_{B_{2R}\left(t\right)}|dz+\int_{B_{2R}\left(t\right)\cap B_{2R}\left(s\right)}|f(z)-\langle f\rangle_{B_{2R}\left(s\right)}|dz
\]
 
\[
\leq\int_{B_{2R}\left(t\right)}|f(z)-\langle f\rangle_{B_{2R}\left(t\right)}|dz+\int_{B_{2R}\left(s\right)}|f(z)-\langle f\rangle_{B_{2R}\left(s\right)}|dz.
\]
Also, it is clear that since $B_{R}\left(t\right)\cup B_{R}\left(s\right)\subset B_{2R}\left(t\right)\cap B_{2R}\left(s\right)$
and hence 
\[
|B_{R}\left(t\right)|\leq|B_{2R}\left(t\right)\cap B_{2R}\left(s\right)|,
\]
 and the same for $B_{R}\left(s\right)$ and we obtain 
\[
|\langle f\rangle_{B_{2R}\left(t\right)}-\langle f\rangle_{B_{2R}\left(s\right)}|\leq\frac{1}{|B_{R}\left(t\right)|}\int_{B_{2R}\left(t\right)}|f(z)-\langle f\rangle_{B_{2R}\left(t\right)}|dz
\]
\[
+\frac{1}{|B_{R}\left(s\right)|}\int_{B_{2R}\left(s\right)}|f(z)-\langle f\rangle_{B_{2R}\left(s\right)}|dz.
\]
 Using Hölders inequality, it is easily obtained that 
\[
\frac{1}{|B_{R}\left(t\right)|}\int_{B_{2R}\left(t\right)}|f(z)-\langle f\rangle_{B_{2R}\left(t\right)}|dz\leq cR^{\gamma\left(t\right)-\frac{1}{p}}\left[f\right]_{p,\gamma\left(\cdot\right)p},
\]
and similarly for the integral around of the ball around $s$. Combining
the above inequalities we get 
\[
|f(t)-f(s)|\leq2\left[f\right]_{p,p\gamma\left(\cdot\right)}\left(|t-s|^{\gamma\left(t\right)-\frac{1}{p}}+|t-s|^{\gamma\left(s\right)-\frac{1}{p}}\right)
\]
Using the assumption on $\gamma$ we have that $\sup_{t,s\in\left[0,T\right]}|t-s|^{\gamma\left(t\right)-\gamma\left(s\right)}\leq c_{T}$
and set $\alpha\left(t\right)=\gamma\left(t\right)-\frac{1}{p},$
we have 
\[
\sup_{t\neq s\in\left[0,T\right]}\frac{|f(t)-f(s)|}{|t-s|^{\max\left(\alpha\left(t\right),\alpha\left(s\right)\right)}}\leq c_{T,\alpha}\left[f\right]_{p,p\gamma\left(\cdot\right)},
\]
which leads to the conclusion 
\[
\parallel f\parallel_{\alpha\left(\cdot\right);\left[0,T\right]}\leq\int_{0}^{T}\int_{0}^{T}\frac{|f(x)-f(y)|^{p}}{|x-y|^{1+\max\left(\gamma\left(x\right),\gamma\left(y\right)\right)p}}dxdy.
\]
\end{proof}
 
\end{document}